\newenvironment{proof}[1][Proof]{\begin{trivlist}
\item[\hskip \labelsep {\bfseries #1}]}{\end{trivlist}}
\newcommand{\beqs}{\begin{eqnarray*}}
\newcommand{\eeqs}{\end{eqnarray*}}
\newcommand{\beqn}{\begin{eqnarray}}
\newcommand{\eeqn}{\end{eqnarray}}
\newcommand{\beqa}{\begin{array}}
\newcommand{\eeqa}{\end{array}}
\def\lra{\longrightarrow}
\def\p{\prime}
\def\bc{\begin{center}}
\def\ec{\end{center}}
\def\p{\partial}
\def\b{\bar}
\def\cH{{\cal H}}
\def\tr{\rm tr}
\def\cH{{\mathcal H}}
\newtheorem{prop}{Proposition}[section]
\newtheorem{theo}[prop]{Theorem}
\newtheorem{lem}[prop]{Lemma}
\newtheorem{cor}[prop]{Corollary}
\newtheorem{rem}[prop]{Remark}
\newtheorem{defi}[prop]{Definition}
\newtheorem{conj}[prop]{Conjecture}
\newtheorem{q}[prop]{Question}
\def\begeq{\begin{equation}}
\def\endeq{\end{equation}}
\def\and{\quad{\rm and}\quad}
\let\lra=\longrightarrow
\def\mapright\#1{\,\smash{\mathop{\lra}\limits^{\#1}}\,}
\begin{document}

%+Title
\title{On the existence of constant scalar curvature K\"ahler metric: a new perspective}
\author{Xiuxiong Chen\\ In memory of Prof. Weiyue Ding}

\date{\today}
\maketitle

\section{Introduction}

In 1950s, E. Calabi first proposed to study the constant scalar curvature K\"ahler (cscK) metric problems.  His ideal is to find the best canonical metric in each given K\"ahler class
(cf. \cite{calabi82, calabi85}), which results in a 4th order, fully nonlinear partial differential equation.    When the first Chern class has a definite sign (positive, negative or zero),
the cscK metric in the suitable multiple of the first Chern class reduces to a K\"ahler-Einstein metrics.  Calabi's program on  K\"ahler-Einstein metrics   is the center of the field for the last few decades where all efforts and techniques
of many mathematicians are devoted to, leading to the final resolution of this difficult problem.  With the existence problem of K\"ahler-Einstein metric settled eventually, perhaps  it is time
to discuss how to attack Calabi's original problem in full generality.  In this note, we propose a ``new, " continuity path in a given K\"ahler class to solve the cscK metric problem.
Module out the profound difficulty in analysis, we hope this will shed light on the existence  problem from direct PDE approach.   This will largely be an expository paper where we concentrate on explaining
various aspects of this new path, except Theorem \ref{openness} in which we proved \emph{openness} along this proposed path. Perhaps more intriguely,  while we can not prove
that the K-energy functional is cocercive in terms of geodesic distance if there exists a cscK metric, we can prove that for any $t\in (0,1)$, this family of twisted K-energy is
indeed coercive in terms of geodesic distance (cf. Theroem \ref{thm3-4} for more precise statement). While we wish this path were new,  as usual, we find ``footsteps of others'', in particular the works
of Fine, Stoppa and Lejmi-Sz$\acute{\text{e}}$kelyhidi \cite{Fine04, Stoppa09, Szekelyhidi14}. The openness theorem is inspired by the renown work of LeBrun-Simanca \cite{LeBrun-Simanca} on
deformation of extremal K\"ahler metrics.

\subsection{A brief account on K\"ahler-Einstein problems}

  In 1976, S.-T. Yau solved the famous
Calabi conjecture  (by showing the existence of Ricci flat K\"ahler metric)
if $C_1 = 0.\;$  Around the same time, S.-T. Yau and T. Aubin
independently solved the existence of the K\"ahler-Einstein metric if $C_1 < 0.\;$   In 1990, S.-T. Yau first suggested that the existence of K\"ahler-Einstein metric is related to certain notion
of stability of the underlying polarization.  
In 1997,   G. Tian introduced the so-called \emph{K-stability} (via \emph{special degeneration}) and
 showed that the existence of K\"ahler-Einstein metric necessarily implies the
K-stability of the underlying polarization through special degeneration.    In 2002, S. K. Donaldson reformulated
it into a notion of \emph{algebraic K-stability}.  In 2012,  with S. K. Donaldson and S. Sun, we  proved 

\begin{theo} [Chen-Donaldson-Sun\cite{cds12-1,cds12-2,cds12-3}] K-stable Fano manifolds admit K\"ahler-Einstein metrics.
\end{theo}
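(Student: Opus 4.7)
The plan is to realize the smooth Kähler--Einstein metric as the endpoint of a continuity path through Kähler--Einstein metrics with conical singularities. Fix a smooth divisor $D \in |-\lambda K_X|$ for $\lambda$ sufficiently large, and study the family
\[
\mathrm{Ric}(\omega_t) \;=\; t\,\omega_t \;+\; (1-t)\,[D], \qquad t\in(0,1],
\]
so that $\omega_t$ is Kähler--Einstein with cone angle $2\pi\beta_t$ along $D$, where $\beta_t = 1-(1-t)/\lambda$; the endpoint $t=1$ recovers the smooth equation. Let $I\subset(0,1]$ denote the set of $t$ admitting a solution. The goal is to show $I$ is nonempty, open, and closed, and hence equals $(0,1]$.

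Nonemptiness for small $t$ (equivalently, small cone angle) is obtained by a log-Aubin--Yau inequality on the pair $(X,(1-\beta_t)D)$, or by the variational method on a twisted Mabuchi functional. Openness is a conic analogue of the LeBrun--Simanca deformation theorem: develop Fredholm theory for the linearized operator in a Hölder space adapted to cone angle $\beta_t$, identify its kernel with holomorphic vector fields tangent to $D$, and apply the implicit function theorem. The technical hurdle is conic Schauder theory and, when automorphisms are present, a Lie-theoretic reduction before inversion.

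Closedness is the heart of the proof. Along a sequence $t_i\to T$, one must establish (i) a $C^0$ bound on the Kähler potential, ultimately via properness of a twisted log-K-energy (this is where the K-stability hypothesis eventually enters), and (ii) a uniform lower Ricci bound on the regular part, which together with Cheeger--Colding theory yields Gromov--Hausdorff precompactness. One passes to a limit $(Z,d_Z)$, a noncollapsed limit space. The crucial analytic input is the \emph{partial $C^0$ estimate}: for $k$ large and divisible, the Bergman density of $-kK_X$ measured in $\omega_{t_i}$ has a uniform positive lower bound. Combined with Hörmander $L^2$-construction of peak sections over $Z$ and the classification of iterated tangent cones as affine algebraic cones (the Donaldson--Sun program), this upgrades Gromov--Hausdorff to algebro-geometric convergence inside a fixed projective space, identifying $Z$ with a normal $\mathbb{Q}$-Fano variety.

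Finally, if $T<1$, the degeneration $X\rightsquigarrow Z$ produced above is promoted to a nontrivial test configuration for $(X,-K_X)$, and the conic equation at level $T$ endows $Z$ with a $\mathbb{C}^*$-action whose modified Donaldson--Futaki invariant is nonpositive --- contradicting K-stability. Hence $T=1$, producing the desired Kähler--Einstein metric. The hardest step, and the one that really carries the theorem, is the partial $C^0$ estimate along the conic path: it is what bridges the purely algebraic hypothesis (K-stability) and the analytic regularity needed on the Gromov--Hausdorff limit, and it requires propagating holomorphic-section control across codimension-two cone singularities and through tangent-cone degenerations with sufficient uniformity to conclude that the limit is a normal projective variety rather than merely a metric space.
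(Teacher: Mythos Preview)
Your outline is a faithful high-level sketch of the Chen--Donaldson--Sun strategy (conical continuity path, openness via conic Schauder theory and implicit function theorem, closedness via Cheeger--Colding plus the partial $C^0$ estimate, and the final contradiction with K-stability). However, there is nothing in \emph{this} paper to compare it against: the theorem is stated here purely as a cited result from \cite{cds12-1,cds12-2,cds12-3}, with no proof or even a sketch supplied. The present paper is expository and uses the K\"ahler--Einstein theorem only as background motivation before turning to its own subject, the twisted cscK continuity path \eqref{eq:continouspath1}. So your proposal is not wrong, but the comparison you were asked to make is vacuous for this particular statement.

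One minor cosmetic point in your sketch: with $D\in|-\lambda K_X|$ the current on the right should be $\frac{1-t}{\lambda}[D]$ (or equivalently $(1-\beta_t)[D]$ with your $\beta_t$), not $(1-t)[D]$, for the cohomology classes to match; you have the correct $\beta_t$ but the displayed equation is off by a factor of $\lambda$.
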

The converse part of the above theorem is due to work of G. Tian\cite{tian97}, S. K. Donaldson\cite{Dona02}, Stoppa \cite{stopp08}, and the most general form is due to R. Berman \cite{Berm12-05}. There is also a Ricci flow approach to
attack the existence problem of the K\"ahler-Einstein metrics. The fundamental problem in the K\"ahler Ricci flow is the so-called \emph{Hamilton-Tian conjecture}:  for any sequence of time $t_i\rightarrow \infty$,
the corresponding sequence of K\"ahler metrics converge in Gromove-Hausdorff sense to a K\"ahler Ricci soliton with at most codimension 4 singularities.
In 2014,  Chen and Wang give an affirmative answer to this conjecture.

\begin{theo} [Chen-Wang\cite{chenwang14-1}] For any Fano manifold, the K\"ahler
Ricci flow will always converge to  a K\"ahler-Ricci soliton
with at most codimension 4 singularities in the sense of
Cheeger-Gromov.  Moreover, the complex structure may jump in the
limit.
\end{theo}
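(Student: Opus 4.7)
The plan is to follow the general strategy that combines Perelman's fundamental estimates for the Kähler–Ricci flow with Cheeger--Colding--Naber type regularity theory for Ricci limit spaces, adapted to the parabolic setting. First, I would recall the basic a priori control along the normalized Kähler--Ricci flow on a Fano manifold: Perelman's estimates provide a uniform bound on the scalar curvature, a uniform diameter bound, and a uniform bound on the (normalized) Ricci potential $u(t)$. Combined with the no-local-collapsing theorem (via Perelman's $\mu$-functional) and pseudolocality, this furnishes a uniform non-collapsing constant and enough regularity to treat $(M,g(t))$ as a smooth family of Riemannian manifolds with bounded Ricci curvature in an averaged sense.

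Next, I would pass to a Gromov--Hausdorff limit. Given any sequence $t_i \to \infty$, the non-collapsing and diameter bounds yield, after passing to a subsequence, a Gromov--Hausdorff limit $(Z,d_\infty)$. The Cheeger--Colding theory then gives a regular/singular decomposition $Z = \cR \cup \cS$, with $\cR$ open and carrying a smooth Riemannian structure, and an a priori real-codimension-$2$ bound on $\cS$. To sharpen this to real codimension $4$, the crucial step is to transplant the Cheeger--Naber quantitative stratification argument (originally for Einstein and Ricci-limit spaces) into the Ricci flow setting, exploiting Perelman's reduced volume monotonicity in place of Bishop--Gromov. The key analytic input is an $\var$-regularity theorem: if Perelman's reduced volume density is sufficiently close to its Euclidean value in a parabolic ball, then a definite amount of smoothness of $g(t)$ can be recovered.

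Then, to identify the structure of the limit, I would use the monotonicity of Perelman's $\cW$-functional (or equivalently the Mabuchi $K$-energy combined with the normalized flow) to show that any time-shift limit saturates the monotonicity. This forces the limit flow on the regular part $\cR$ to be self-similar, i.e.\ to be a gradient shrinking Kähler--Ricci soliton. Smoothness of the soliton potential on $\cR$ would then follow from standard elliptic regularity applied to the soliton equation. The possible jump of the complex structure must be handled separately: the complex structures $J(t_i)$ on $M$ converge to a generally different complex structure $J_\infty$ on the regular part of $Z$, and one must check that $(Z,J_\infty)$ extends to a (possibly singular) normal $\QQ$-Fano variety, using a partial $C^0$-estimate along the flow in the spirit of Tian and Donaldson--Sun.

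The main obstacle, by far, is the codimension-$4$ bound on the singular set $\cS$. Unlike the elliptic setting of Einstein manifolds, here one does not have pointwise Ricci bounds, only bounds on scalar curvature and on Perelman's potential. The quantitative stratification, the cone-splitting principle, and the neck-analysis machinery therefore have to be rebuilt using parabolic tools (heat-kernel estimates for the conjugate heat equation, reduced distance comparison, and backward pseudolocality). Everything else — the existence of a Gromov--Hausdorff limit, the monotonicity identifying it as a soliton, and the algebro-geometric bookkeeping of the jumping complex structure — is essentially a consequence of that regularity statement combined with now-standard techniques.
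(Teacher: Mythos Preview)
The paper does not contain a proof of this theorem. It is stated purely as a citation of the result of Chen--Wang \cite{chenwang14-1} (the Hamilton--Tian conjecture), included in the introduction as background for the existence problem of K\"ahler--Einstein metrics. There is therefore nothing in the present paper to compare your proposal against.

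For what it is worth, your outline is a reasonable high-level summary of the strategy actually carried out in \cite{chenwang14-1}: Perelman's scalar curvature, diameter and potential estimates give non-collapsing and Gromov--Hausdorff precompactness; a parabolic $\varepsilon$-regularity theorem based on reduced volume replaces the elliptic Cheeger--Colding--Naber input; quantitative stratification adapted to the flow yields the codimension-$4$ bound on the singular set; and monotonicity of Perelman's $\mathcal W$-functional forces the limit to be a shrinking K\"ahler--Ricci soliton, with the complex structure allowed to jump. You are also correct that the real work lies in the codimension-$4$ regularity, which requires rebuilding the Cheeger--Naber machinery in the parabolic setting. But none of this is developed in the present paper, which simply quotes the result.
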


While there are many following up problems related to the existence
problem of K\"ahler-Einstein metrics,  the central problem in K\"ahler geometry is
to solve the more general conjecture on the existence of cscK metrics.

\begin{conj} [Yau-Tian-Donaldson] The underlying polarized K\"ahler manifold $(M, [\omega])$ is K-stable if and only if there exists a constant scalar curvature K\"ahler metric in $[\omega]$. 
\end{conj}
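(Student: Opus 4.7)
The plan is to attack the two directions of the conjecture separately. The direction \emph{cscK $\Rightarrow$ K-stability} is essentially known through the work of Tian, Donaldson, Stoppa, and, most generally, Berman, via comparison of the K-energy along geodesic rays induced by test configurations. The substantive content of the conjecture is the reverse implication, and this is where the continuity path advertised in this paper comes in.

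The method is the standard continuity trilogy applied to the one-parameter family of twisted cscK equations in $[\omega]$, parameterized by $t\in[0,1]$, whose $t=0$ endpoint is a trivially solvable auxiliary equation and whose $t=1$ endpoint is the cscK equation. Let $T\subset[0,1]$ denote the set of parameters for which a solution $\omega_t\in[\omega]$ exists. By construction $T$ is non-empty at $t=0$. Openness in the interior is the content of Theorem \ref{openness}, proved by adapting the LeBrun-Simanca implicit-function argument. The first quantitative ingredient beyond openness is Theorem \ref{thm3-4}, which gives coercivity of the twisted K-energy in terms of geodesic distance for each $t\in(0,1)$; this pins the potentials $\varphi_t$ of the solutions in the metric topology of the space of K\"ahler potentials on any compact subinterval of $[0,1)$.

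The core PDE work is closedness on subintervals $[0,t_0]\subset[0,1)$. Coercivity together with Chen-Tian-type lower bounds on the K-energy along geodesics should yield a uniform $C^{0}$ bound on $\varphi_t$. One then has to upgrade this to uniform $C^{k}$ bounds in order for an Arzel\`a-Ascoli argument to produce a limit solution at $t_0$. Because the scalar curvature equation is fully nonlinear and fourth order, this is more delicate than the Monge-Amp\`ere case underlying the K\"ahler-Einstein proof: a viable plan is to derive a $C^{2}$ estimate of Calabi-Yau type from the twisted scalar curvature equation, use it to bound $n+\Delta\varphi_t$ from above, and then bootstrap via Evans-Krylov and Schauder. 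Every step has to be uniform in $t$.

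The main obstacle, and the true content of the conjecture, is the endpoint $t=1$. The coercivity input of Theorem \ref{thm3-4} degenerates there, so the scheme of the previous paragraph cannot simply be pushed to the boundary. The expected mechanism is a dichotomy: either the a priori estimates survive at $t=1$ and a cscK metric is produced directly, or they fail, in which case the blow-up locus along the continuity path must be converted into an algebro-geometric test configuration with non-positive Donaldson-Futaki invariant, contradicting the assumed K-stability. Making this conversion rigorous, i.e.\ extracting an algebraic destabilizer from an analytic degeneration of solutions to a fourth order PDE, is the deepest difficulty in the program, and is the step on which this PDE approach ultimately stands or falls.
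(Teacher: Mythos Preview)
The statement you are addressing is labelled in the paper as a \emph{conjecture}, and the paper does not prove it.  There is therefore no ``paper's own proof'' to compare against; the paper's purpose is precisely to propose the continuity-path program you describe and to establish some of its ingredients (openness for $t\in(0,1)$, coercivity of the twisted K-energy for $t\in(0,1)$), while leaving the conjecture itself open.

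Your proposal is accordingly not a proof but an honest outline of that program, and you correctly flag the two places where it is incomplete.  First, closedness on compact subintervals of $[0,1)$: coercivity in geodesic distance does not by itself give a $C^0$ bound on $\varphi_t$, and there is at present no Yau-type $C^2$ estimate for the fourth-order twisted scalar curvature equation uniform in $t$; Proposition~\ref{prop2-1} only says that once you already have a quasi-isometry bound, higher regularity follows.  So the step ``coercivity $\Rightarrow$ uniform $C^0$ $\Rightarrow$ uniform $C^2$'' is a genuine gap, not a routine bootstrap.  Second, the endpoint $t=1$: the dichotomy you sketch (estimates survive, or one extracts a destabilising test configuration) is exactly the expected picture, but converting an analytic blow-up of solutions to a fourth-order PDE into an algebraic test configuration with the right sign of the Donaldson--Futaki invariant is, as you say, the crux, and no mechanism for this is currently available outside the K\"ahler--Einstein setting.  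In short, your write-up is a faithful summary of the strategy the paper advocates, but it should be presented as a program with identified open steps rather than as a proof.
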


\subsection{A new continuity path}

 For any K\"ahler manifold $(M, [\omega]),$  consider the space of K\"ahler potentials
 \[
 {\cal H} = \{ \varphi: \omega_\varphi = \omega + \sqrt{-1} \partial \bar \partial \varphi > 0,\;\;{\rm on }\;\;M\}.
 \]
 E. Calabi proposed to study the existence of \emph{extremal K\"ahler metric} in 1982. This
has been a central problem in K\"ahler geometry since its inception.  A K\"ahler metric is called
``extremal'' if the complex gradient vector field of its scalar curvature function is
holomorphic.    A special case is the so-called
\emph{cscK metric} (constant scalar curvature K\"ahler metric) when this vector field vanishes.   To attack the existence problem of cscK metrics,
we propose to study the following continuous path.

 For any positive, closed  $(1,1)$-form $\chi $, define a continuous path
$ t\in [0,1]$ as

\begin{equation}
  t \cdot   \left( R_{\varphi_t} -{{[C_1(M)]\cdot [\omega]^{[n-1]} } \over {[\omega]^{[n]}}} \right) 
  = (1 -  t) \cdot \left(\tr_{\varphi_t}\chi - {{[\chi]\cdot [\omega]^{[n-1]} } \over {[\omega]^{[n]}}}\right).  \label{eq:continouspath1}
\end{equation}
and for simplicity denote
\[
\underline{\chi} = {{[\chi]\cdot [\omega]^{[n-1]} } \over {[\omega]^{[n]}}} \qquad{\rm and}\qquad  \underline{R} = {{[C_1(M)]\cdot [\omega]^{[n-1]} } \over {[\omega]^{[n]}}}.
\]
where $[\omega]^{[k]}=\frac{[\omega]^k}{k!}$. Let $C_t=(1-t)\underline{\chi}-t\underline{R}$, then Equation (\ref{eq:continouspath1}) is rewritten as

\begin{align}
(1-t)\tr_{\varphi_t}\chi-t\;R_{\varphi_t}=C_t.\label{eq:twisted cscK}
\end{align}

\begin{defi}\label{def1-4}  A K\"ahler metric is called twisted cscK metric if its scalar curvature satisfies Equation (\ref{eq:twisted cscK}). We call it twisted extremal K\"ahler metric
if the left hand side of Equation  (\ref{eq:twisted cscK}) gives rise to a holomorphic vector field.
\end{defi}
When $t=1,$ this reduces to the equation for cscK metrics. Let $I$ denote the set of time parameter $t\in [0,1]$ such that Equation (\ref{eq:continouspath1}) 
can be solved at time $t.\;$ As usual,  our goal is to first prove that $I$ is not empty which usually means finding a starting point
where we can solve this equation.   Then, to prove $I$ is open which is crucial for this program to be viable. The hard part is of course
to prove $I$ is closed which involves hard \emph{a priori estimate}. \\

For each $t\in [0,1]$, we call a solution to Equation (\ref{eq:continouspath1})  as \emph{twisted cscK metric} with $(1-t) \chi.\;$ When $t={1\over 2}$, this is exactly the equation considered by Stoppa
\cite{Stoppa09}, which in turns was motivated from J. Fine's work \cite{Fine04}.   The first theorem we proved here is

\begin{theo}[Openness]\label{openness} For any $\chi>0$ and $t \in (0,1)$, if there exists one solution to Equation   (\ref{eq:continouspath1})  for time $t \in (0,1)$, then there exists a small $\delta>0$
such that for any $t'\in (t-\delta, t+\delta) \cap [0,1), $ there exists a solution to Equation  (\ref{eq:continouspath1})  for time $t'.\; $
\end{theo}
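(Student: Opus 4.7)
The plan is to apply the implicit function theorem (IFT) in H\"older spaces, following the template of LeBrun--Simanca \cite{LeBrun-Simanca}. Fix $\alpha\in(0,1)$ and define
$$F:C^{4,\alpha}(M)\times[0,1)\lra C^{0,\alpha}(M),\qquad F(\varphi,s)\defeq(1-s)\tr_\varphi\chi - sR_\varphi - C_s.$$
By the choice of $C_s$, the image of $F$ lies in the hyperplane of $\omega_\varphi^n$-mean-zero functions, and our hypothesis is $F(\varphi_t,t)=0$.

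First, I would compute the $\varphi$-linearization $L_t\defeq D_\varphi F(\varphi_t,t)$. The standard variation formulas
$$\delta R_\varphi(\psi)= -\cD_\varphi^*\cD_\varphi\psi + (\nabla R_\varphi,\nabla\psi)_\varphi, \qquad \delta(\tr_\varphi\chi)(\psi)=-\langle i\pa\b\pa\psi,\chi\rangle_\varphi,$$
where $\cD_\varphi=\b\pa\circ\nabla^{1,0}$ is the Lichnerowicz operator of $\omega_\varphi$, show that $-L_t$ is a fourth-order elliptic operator with leading symbol $t|\xi|^4$, formally self-adjoint on $L^2(\omega_{\varphi_t})$ modulo divergence terms.

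The heart of the proof is the kernel calculation. Equation (\ref{eq:twisted cscK}) is the Euler--Lagrange equation of the twisted Mabuchi-type energy $\cE_t=t\cM+(1-t)J_\chi$, so its second variation at the critical point $\varphi_t$ works out, via integration by parts, to
$$-\int_M \psi L_t\psi\,\omega_{\varphi_t}^n \;=\; t\int_M |\cD_{\varphi_t}\psi|^2\,\omega_{\varphi_t}^n \;+\; (1-t)\int_M i\pa\psi\wedge\b\pa\psi\wedge\chi\wedge\frac{\omega_{\varphi_t}^{n-2}}{(n-2)!}.$$
Both summands are non-negative, and, \emph{crucially}, the second one --- because $\chi>0$ and $1-t>0$ --- already forces $\pa\psi=0$, hence $\psi$ constant. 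Thus $\ker L_t=\RR$, and by formal self-adjointness and elliptic Fredholm theory $L_t$ descends to an isomorphism $C^{4,\alpha}(M)/\RR \to \{f\in C^{0,\alpha}(M):\int_M f\,\omega_{\varphi_t}^n=0\}$. The IFT then produces a $C^1$-family $s\mapsto\varphi_s$ of $C^{4,\alpha}$-solutions to $F(\varphi_s,s)=0$ on $(t-\delta,t+\delta)\cap[0,1)$ for some $\delta>0$, and a standard Schauder bootstrap upgrades each $\varphi_s$ to a smooth K\"ahler potential.

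The main obstacle is precisely this kernel computation, and it also explains why the argument must exclude $s=1$: at $s=1$ only the Mabuchi-type term $\int|\cD\psi|^2$ survives, whose kernel is the full space of holomorphy potentials, and these obstruct surjectivity of $L_1$ whenever $\Aut_0(M,[\omega])$ is nontrivial. The positivity $\chi>0$ combined with $1-t>0$ is exactly the mechanism that breaks this potential degeneracy throughout the open interval $(0,1)$, which makes the elementary IFT argument go through without the extra bookkeeping needed in the extremal setting of \cite{LeBrun-Simanca}.
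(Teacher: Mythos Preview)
Your proposal is correct and follows essentially the same route as the paper: set up the twisted scalar curvature map between H\"older spaces, show the linearization at the solution has kernel exactly the constants (using that $\chi>0$ and $1-t>0$ force $\nabla\psi=0$), and invoke the implicit function theorem. Your packaging is somewhat cleaner than the paper's: by recognizing $L_t$ as the Hessian of the twisted K-energy $E_{\chi,t}=tE+(1-t)J_\chi$ at its critical point, you obtain the decomposition
\[
-\int_M \psi L_t\psi\,\omega_{\varphi_t}^n = t\int_M|\cD_{\varphi_t}\psi|^2\,\omega_{\varphi_t}^n + (1-t)\int_M \psi_{,\bar\alpha}\psi_{,\beta}\chi^{\alpha\bar\beta}\,\omega_{\varphi_t}^n
\]
immediately, whereas the paper integrates by parts directly and carries along the cross-term $u_{,\bar\delta}\bigl(tR_\varphi-(1-t)\tr_\varphi\chi\bigr)_{,\delta}u$, which it then observes vanishes at the twisted cscK solution (and is small nearby, which the paper needs because it also proves $\ker L_{(\varphi,t)}=\RR$ for $(\varphi,t)$ in a neighborhood, not just at the base point). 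Likewise, your application of the IFT on the quotient $C^{4,\alpha}/\RR$ is more direct than the paper's auxiliary map $\Psi$ and the subsequent analysis of $\tilde F=F\circ\Psi^{-1}$; both handle the same one-dimensional kernel/cokernel, but yours is the standard maneuver while the paper's is a hands-on unwinding of it.
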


This is a crucial result which establishes the validity of the path to solve the cscK metric problem in a given K\"ahler class.  At technical level,  it should be compared with
the well-known 
deformation theorem proved by LeBrun-Simanca \cite{LeBrun-Simanca} where they discussed the deformation of cscK metrics when the K\"ahler class and/or complex structure changes. 
With the \emph{openness} of $I$ at present, the next important matter is wether $I$ is non-empty. In particular, we need a starting point.  \\

When $t=0,$ this reduces to the well-known equation

\begin{equation}
 \tr_\varphi\chi = \underline \chi.  \label{eq:jequation}
\end{equation}
which is the Euler-Lagrange equation for the well-known $J$ functional introduced in \cite{chen00}, defined by the following formula of its derivative:
\begin{equation}
  {{\mathrm{d} J_\chi}\over {\mathrm{d}t}} =  \displaystyle \int_M\; {{\partial \varphi}\over {\partial t}} (\tr_{\varphi}\chi-\underline{\chi})\omega_{\varphi}^{[n]}. \label{eq:jfunctional}
\end{equation}

We will delay discussions of $J$ functional to the next section and for now, we just remark that there are several known obstructions to the solution of Equation (\ref{eq:jequation}). For instance, 
\[
 \underline{\chi}\cdot [\omega]  > [\chi].
\]
The main point is,  if our goal is to attack existence of cscK metrics, we only need to choose \emph{one} $\chi$ for which $I$ is non-empty. For a moment of thought,
it is obvious that we can always solve Equation (\ref{eq:jfunctional}) at $t = 0$ if we choose $\chi $ to be in the same class as $[\omega]$.  Thus, we can always find 
a smooth positive $(1,1)$-form $\chi$ such that set $I$ is non-empty. In light of the openness theorem 1.5,  this leaves the next problem both interesting and important:

\begin{q} \label{q1-6} For any $\chi>0, $  if there exists one solution to Equation   (\ref{eq:continouspath1})  for time $t =0$, then there exists a small $\delta>0$
such that for any $t \in [0, \delta) \cap [0,1), $ there exists a solution to Equation  (\ref{eq:continouspath1})  for time $t.\; $
\end{q}

 We can prove  directly that the linearized operator is strictly elliptic or semi-elliptic for $t\in [0,1], \;$ as we will explain later.  The case at $t=0,1$ are subtle for different reason.
 At time $t=1, $ the equation is  degenerated elliptic where the \emph{kernal}
is induced by the underlying holomorphic vector field. We encourage readers to \cite{chen-Zeng14} for the discussions of this problem.  Likewise, we also defer the discussion of the $t=0$ to a later paper.  The difficulty cause at $t=0\;$ is of different nature: while all strictly elliptic operators,    the inverse operator lost
two derivatives when compairing to $t>0.\;$ Thus,  we need to address this problem more carefully. The immediate thought would be use either a version of Nash-Moser inverse function theorem or the method via adiabatic limit as in a beautiful paper \cite{Fine04}.  However,  there is another different but plausible approach and we would like describe it now.  Heuristically, one can take a flow approach as in \cite{chenhe05}.  One note that at $t=0$, the $J$ flow converges to the solution
$\tr_\varphi \chi = const.\;$  In other words, for a sufficient small $C^{4,\alpha}$ neighborhood of this solution, the $J$ flow is stable (in other words,
for any flow initiate inside this neighborhood will never leave this neighborhood). Intuitively then,  the twisted Calabi flow for $t$ small enough is stable in this neighborhood
 and the argument in \cite{chenhe05} might be adopted to settle this problem.  \\
 
Another interesting twist is to compare this
path with Donaldson's Continuity Path on conical KE metrics (see discussions in Section \ref{sect2}).  The corresponding problem there is to show the existence of conical K\"ahler-Einstein metrics when conical angle is
very small.\footnote{Suppose that $[D] = [\chi] = C_1(M), $ then the path we consider can be reduced to the usual Donaldson's path on conical KE metrics.}  Note that in the case of conical KE path, the existence for small angle is established through difficult work by R. Berman \cite{Berm13} and JMR \cite{JMR11} etc.
It will be interesting to see if the method in \cite{Berm13} can be extend over to our situation. 
 Suppose that there is a sequence of smooth, positive $(1,1)$ form $\chi_\epsilon$ such that $\displaystyle \lim_{\epsilon\rightarrow 0} \chi_\epsilon = D$
and the convergence is smooth away from divisor $D$, then twisted conical cscK metric path \ref{eq:twistedconicalcscKpath}
  and the aformentioned path is naturally
connected thorough this limiting process. It will be interesting to see if one can prove from the existence of equation for $t$ small  to the existence of conial KE with small angle, or vice
versa.   As a matter of fact,  we may generalize a conjecture of Donaldson to include the case of twisted cscK metric:

\begin{conj} Suppose $[D] = [\chi]$ and $D$ is generic,  then the existence of twisted cscK metric at $t \in [0,1]$ is equivalent to the existence
of conical cscK metric at time $t \in [0,1].\;$
\end{conj}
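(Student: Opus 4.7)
The plan is to bridge the two existence problems through a regularization of the current of integration along $D$. Since $[D]=[\chi]$, a Demailly-type regularization produces a family of smooth positive $(1,1)$-forms $\chi_\epsilon$ in the class $[\chi]$ converging to $[D]$ weakly as currents and smoothly on compact subsets of $M\setminus D$. At the formal level, the twisted cscK equation $(1-t)\tr_{\varphi}\chi_\epsilon - tR_\varphi = C_t$ should, upon letting $\epsilon\to0$, degenerate into the cscK equation for a K\"ahler metric with conical singularities along $D$: the only new contribution of $\tr_\varphi\chi_\epsilon$ in the limit is a Dirac-type term concentrated on $D$, which, tested against a smooth function, matches the $(1-\beta)[D]$ distributional piece appearing in the scalar curvature of a conical metric. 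Matching constants should identify the cone angle $2\pi\beta$ as a specific function of $t$, which is the precise sense in which the two paths are parametrically aligned.

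For the forward implication, suppose a twisted cscK solution $\varphi_t$ exists for some smooth $\chi$. First I would combine the openness Theorem~\ref{openness} with a connectedness argument to produce, for each small $\epsilon$, a twisted cscK solution $\varphi_t^{(\epsilon)}$ against $\chi_\epsilon$. Next I would extract a weak limit $\varphi_t^{(\epsilon)}\to\varphi_\infty$ with smooth convergence on compact subsets of $M\setminus D$ together with a uniform conical bundle structure near $D$, modeled on the techniques developed for conical K\"ahler--Einstein metrics by Jeffres--Mazzeo--Rubinstein and Berman. One then verifies that $\varphi_\infty$ satisfies the distributional identity that defines the conical cscK equation. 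Conversely, starting from a conical cscK metric $\omega_c$, I would attempt an implicit function theorem in a H\"older space adapted to the conical singularity, using the strict ellipticity of the linearized twisted operator for $t\in(0,1)$ to perturb $\omega_c$ into nearby smooth twisted cscK metrics associated to the approximants $\chi_\epsilon$. The generic hypothesis on $D$ should eliminate holomorphic vector fields tangent to $D$, killing the obstructing cokernel of the linearization.

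The dominant obstacle, by a wide margin, is uniform a priori control as $\epsilon\to0$ in either direction. The twisted cscK equation is a fully nonlinear fourth-order system, and even the $C^{0}$ estimate is entangled with properness of the twisted K-energy, as indicated in the introduction. Passing to the conical regime requires higher-order Schauder estimates in spaces tailored to the cone angle, yet the analogues of the Evans--Krylov and conical Schauder theory underlying the Monge--Amp\`ere proofs of Berman and JMR are not yet available for the cscK operator. In short: the formal limiting picture is clean, the openness and the linearized ellipticity give us the local pieces, but closing the loop rigorously hinges on a conical-Schauder theory for a fourth-order equation, which is exactly why the statement is presented as a conjecture rather than a theorem.
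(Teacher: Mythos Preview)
The statement is a \emph{conjecture}, and the paper offers no proof of it. What the paper does provide is exactly the heuristic you reproduce: a family $\chi_\epsilon \to [D]$ of smooth approximants linking the twisted path (\ref{eq:continouspath1}) to the conical path (\ref{eq:twistedconicalcscKpath}), together with the remark that in the canonical class one direction is established in \cite{cds12-1} while the other remains open. Your outline is faithful to this picture and you are right to flag the missing fourth-order conical Schauder theory as the principal obstruction.

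Two small points of comparison. First, the paper is slightly more specific about which direction is accessible: in the K\"ahler--Einstein setting, \cite{cds12-1} produces smooth twisted approximants to a given conical metric, so the implication conical $\Rightarrow$ twisted is the one with a template, whereas you treat the two directions symmetrically. Second, your use of Theorem~\ref{openness} plus connectedness to pass from $\chi$ to $\chi_\epsilon$ at fixed $t$ is not something the paper claims; openness is proved only in the $t$-variable, not in the $\chi$-variable, so that step would itself require an argument (or an appeal to the invariance $R(\chi_1)=R(\chi_2)$ of Conjecture~\ref{conj1-12}, which is also open). Otherwise your assessment that the formal limit is clean but the uniform estimates are the whole difficulty matches the paper's own stance.
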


Note that in canonical class, the necessary part is established in \cite{cds12-1} and the sufficient part is open. If we drop the assumption that $D$ is generic, then one expect the sufficient part to be false. It is interesting if we can re-construct Tian-Yau's theorem on complete Calabi-Yau metric on K\"ahler manifold outside a divisor or the existence of conical KE metric with small conical angle via twisted cscK metric approached suggested here. While technically it is still complicated, the conceptual picture is nonetheless very clear. \\

One surprising observation  is that we can avoid this problem of ``jump" or regularity at $t=0$ in Fano manifold by working on a slightly different path (\ref{path3}). \footnote{The path (\ref{path3}) at $[0,1]$ is equivalent to the path  (\ref{eq:continouspath1})  at $[{1\over 2},1]$. }
Note that in Fano manifold, in any K\"ahler class, we can find a metric $\omega$ such that 
\[
Ric\;\omega > 0.
\]
If we choose $\chi = Ric\;\omega> 0$, then

\begin{cor}[Openness]\label{openness} Suppose $\chi $ is a positive Ricci form in any Fano manifold. Then for any  $t \in [0,1)$, if there exists one solution to Equation   (\ref{eq:continouspath1})  for time $t \in [0,1)$, then there exists a small $\delta>0$
such that for any $t'\in [t-\delta, t+\delta) \cap [0,1), $ there exists a solution to Equation  (\ref{path3})  for time $t'.\; $
\end{cor}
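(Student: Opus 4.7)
The plan is to exploit the footnote's key observation that equation (\ref{path3}) on $[0,1]$ is just a reparametrization of the original continuity path (\ref{eq:continouspath1}) restricted to $[\tfrac{1}{2},1]$. Under this correspondence the endpoint $t=0$ of (\ref{path3}) --- which is the boundary where openness would otherwise fail because the linearization drops two derivatives --- is mapped to the \emph{interior} parameter $\tau=\tfrac{1}{2}$ of (\ref{eq:continouspath1}), where the underlying equation is already genuinely fourth order and strictly elliptic. Consequently, the subtle $t=0$ degeneracy is engineered away by the very definition of (\ref{path3}), and the corollary should reduce directly to Theorem~\ref{openness}.

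First I would make the reparametrization explicit. Set $\tau = \tfrac{1+t}{2}$, so $\tau \in [\tfrac{1}{2},1]$ when $t\in[0,1]$, and verify (by clearing denominators in (\ref{eq:continouspath1}) after this substitution) that a K\"ahler potential $\varphi$ solves (\ref{path3}) at parameter $t$ if and only if it solves (\ref{eq:continouspath1}) at parameter $\tau$. The image of $[0,1)$ under this map is $[\tfrac{1}{2},1)$, which is contained in the open interval $(0,1)$ where Theorem~\ref{openness} is valid.

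The openness claim at any $t_0 \in [0,1)$ now follows immediately. Given a solution $\varphi_{t_0}$ of (\ref{path3}) at $t_0$, it is also a solution of (\ref{eq:continouspath1}) at $\tau_0 = \tfrac{1+t_0}{2} \in [\tfrac{1}{2},1)\subset(0,1)$, so Theorem~\ref{openness} furnishes some $\tilde\delta>0$ and solutions of (\ref{eq:continouspath1}) for every $\tau \in (\tau_0-\tilde\delta,\tau_0+\tilde\delta)\cap[0,1)$. Pulling these back through the reparametrization yields solutions of (\ref{path3}) for every $t$ in a neighborhood of $t_0$ of width $2\tilde\delta$, which is precisely the claim. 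The crucial case $t_0=0$ is covered because $\tau_0=\tfrac{1}{2}$ is safely inside $(0,1)$.

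The hypotheses that $M$ is Fano and that $\chi = \mathrm{Ric}\,\omega > 0$ play no role in the openness argument itself; they enter only to furnish a natural starting solution for (\ref{path3}) at $t=0$. After rescaling $\chi$ so that $[\chi]=c_1(M)$, one checks that $\varphi \equiv 0$ makes both sides of (\ref{path3})$|_{t=0}$ vanish identically, since then $R_\omega = \tr_\omega(\mathrm{Ric}\,\omega)$ pointwise and $\underline{\chi}=\underline{R}$ cohomologically. In this sense there is no real obstacle in the proof --- the point is conceptual: by working with (\ref{path3}) rather than (\ref{eq:continouspath1}), we have already bypassed the analytic difficulty at the $J$-equation endpoint, and the reparametrization transfers Theorem~\ref{openness} across for free.
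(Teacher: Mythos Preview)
Your approach is exactly what the paper intends: the corollary is stated without proof precisely because the footnote's equivalence reduces it to Theorem~\ref{openness}, and you have unpacked that reduction correctly. One computational slip: the reparametrization is not linear. Matching $R_\varphi-\underline{R}=(1-t)(\tr_\varphi\chi-\underline{R})$ from (\ref{path3}) against $R_\varphi-\underline{R}=\tfrac{1-\tau}{\tau}(\tr_\varphi\chi-\underline{\chi})$ from (\ref{eq:continouspath1}) (using $\underline{\chi}=\underline{R}$ since $[\chi]=c_1(M)$) gives $\tau=\tfrac{1}{2-t}$, not $\tau=\tfrac{1+t}{2}$; both send $[0,1]$ to $[\tfrac12,1]$ with the right endpoints, so the argument is unaffected, but your ``verify by clearing denominators'' step would fail with the stated formula. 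Also, at $t=0$ the two sides of (\ref{path3}) are equal (it is the tautology $R_\omega=\tr_\omega\mathrm{Ric}\,\omega$), not zero.
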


Let $E$ be the well-known K-energy functional introduced by T. Mabuchi \cite{Ma87}, whose derivative is given below:
\begin{equation*}
{{\mathrm{d}E}\over {\mathrm{d}t}} = - \int_M\; {{\partial \varphi}\over {\partial t}} (R_{\varphi} - \underline{R}) \omega_\varphi^{[n]}
\end{equation*}
we then call 
$$E_{\chi,t}=(1-t) J_\chi+t E$$
 the \emph{twisted K-energy functional}.

Following Stoppa \cite{Stoppa09}  and Donaldson \cite{Dona02}, we have

\begin{prop}\label{prop1-5} For any $\chi$ and $t \in [0,1]$, $\omega\rightarrow (1-t)\tr_{\varphi_t}\chi-t R_{\varphi_t}-C_t$ can be viewed as 
 a moment map associated with the infinite dimensional group of exact symplectic diffeomorphisms.

\end{prop}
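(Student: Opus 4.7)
The plan is to identify the quantity $(1-t)\tr_{\varphi_t}\chi-tR_{\varphi_t}-C_t$ as a linear combination of two moment maps: the Donaldson--Fujiki moment map for the scalar curvature, and Fine's moment map for a fixed closed $(1,1)$-form. Since a linear combination of moment maps for the same group action is again a moment map, this reduces the proposition to two known (or easily verified) identities.

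Fix a compact symplectic manifold $(M,\omega)$ and let $\mathcal{J}$ denote the space of almost complex structures on $M$ compatible with $\omega$; this is formally an infinite-dimensional K\"ahler manifold. At $J\in\mathcal{J}$, the tangent space consists of endomorphisms $A$ of $TM$ satisfying $AJ+JA=0$ with $\omega(A\cdot,\cdot)$ symmetric; the complex structure on $\mathcal{J}$ is $A\mapsto JA$ and the $L^2$-metric is computed with respect to $g_J=\omega(\cdot,J\cdot)$. The group $\mathcal{G}$ of exact symplectic diffeomorphisms acts on $\mathcal{J}$ by K\"ahler isometries; its Lie algebra is identified with $C^\infty_0(M)$, paired with its dual via integration against $\omega^n/n!$, with infinitesimal action $H\mapsto L_{X_H}J$.

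The argument then proceeds in three steps. Step one: invoke the Donaldson--Fujiki theorem, which asserts that $\mu_1:J\mapsto R(J)-\underline R$ is a moment map for the $\mathcal{G}$-action on $\mathcal{J}$. Step two: establish the analogous statement for the twist, namely that for a fixed closed real $(1,1)$-form $\chi$ the map $\mu_2:J\mapsto \tr_{g_J}\chi-\underline\chi$ is also a moment map. This is the core calculation; one varies $J$ along $A=L_{X_H}J$, computes $\delta_A(\tr_{g_J}\chi)$ pointwise using the algebraic constraints $AJ+JA=0$ and $\omega(A\cdot,\cdot)$ symmetric, pairs against a test Hamiltonian $K$, and integrates by parts invoking $d\chi=0$ to match the symplectic pairing $\omega_\mathcal{J}(A,L_{X_K}J)$ on $\mathcal{J}$. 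Step three: form $\mu_t:=(1-t)\mu_2-t\mu_1$, a moment map on $\mathcal{J}$, and pull back via the standard Moser-type identification of the space of K\"ahler potentials $\mathcal{H}$ with an orbit in $\mathcal{J}$, sending $\varphi$ to a $J_\varphi$ for which $\tr_{g_{J_\varphi}}\chi$ and $R(J_\varphi)$ become $\tr_\varphi\chi$ and $R_\varphi$. The pullback is then exactly $(1-t)\tr_{\varphi_t}\chi-tR_{\varphi_t}-C_t$, as desired.

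The main obstacle is Step two: one must execute the pointwise variation computation for $\delta_A(\tr_{g_J}\chi)$ subject to the algebraic constraints on $A$, and verify that the integration-by-parts step closes up correctly thanks precisely to $d\chi=0$. Signs, conventions, and the normalization of the symplectic form on $\mathcal{J}$ all have to be tracked to land on the exact expression in the statement and the specified constant $C_t=(1-t)\underline\chi-t\underline R$. Once this moment map identity for $\mu_2$ is established in the same conventions as Donaldson--Fujiki's $\mu_1$ (both originally due, in related forms, to Fine and Stoppa in the twisted case), the linear combination and pullback to $\mathcal{H}$ are purely formal.
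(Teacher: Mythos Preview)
Your outline is correct and is in fact more detailed than what the paper supplies: the paper does not give a proof of this proposition at all, but simply states it with the prefatory remark ``Following Stoppa \cite{Stoppa09} and Donaldson \cite{Dona02}, we have\ldots''. Your three-step reduction---Donaldson--Fujiki for $R-\underline R$, the Fine/Stoppa computation for $\tr_{g_J}\chi-\underline\chi$, then taking the linear combination and pulling back to $\mathcal H$---is exactly the content of those references, so your proposal is the intended argument made explicit rather than a different route.
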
 

Following Donaldson \cite{Dona02}, Stoppa \cite{Stoppa09} and in particular, Lejmi-Sz$\acute{\text{e}}$kelyhidi  \cite{Szekelyhidi14}, we can also formulate a notion of \emph{twisted K-stability} which we will delay the discussions to Section \ref{sect5}. In  \cite{chen04}, we proved that $J$ functional is strictly convex along $C^{1,1}$ geodesic segments.
Following the recent work of  Berman-Berndtsson\cite{Ber14-01}, Chen-Li-Paun \cite{ChenLiPaun14}, we arrive at the following expected statement

\begin{prop}\label{prop1-6}
 For any $t \in [0,1]$, the twisted functional $ E_{\chi,t}$ is convex along any $C^{1,1}$ geodesic segment and in particular,
strictly convex when $t < 1.\;$
\end{prop}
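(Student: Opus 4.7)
The plan is to decompose $E_{\chi,t}=(1-t)J_\chi+tE$ into its two building blocks and appeal to two established convexity results separately. First I would invoke the result of \cite{chen04} that, for any smooth positive $(1,1)$-form $\chi$, the functional $J_\chi$ is strictly convex along every $C^{1,1}$ geodesic segment $\{\varphi_s\}_{s\in[0,1]}\subset\mathcal{H}$. Schematically, one differentiates (\ref{eq:jfunctional}) twice along a smooth geodesic satisfying $\ddot\varphi_s=|\nabla\dot\varphi_s|^2_{\omega_{\varphi_s}}$ and identifies $d^2 J_\chi/ds^2$ with a non-negative quantity controlling $|\bar\partial\dot\varphi_s|^2_\chi$. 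The passage to the $C^{1,1}$ setting is handled by the $\epsilon$-regularized geodesic approximation in \cite{chen04}, and strict convexity follows because $\bar\partial\dot\varphi_s$ cannot vanish identically along a nontrivial geodesic.

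Next I would invoke the convexity of the Mabuchi K-energy $E$ along $C^{1,1}$ geodesic segments due to Berman-Berndtsson \cite{Ber14-01}, with an alternative proof by Chen-Li-Paun \cite{ChenLiPaun14}. Here the key ingredient is the Chen-Tian decomposition of $E$ as the sum of a relative entropy term and a pluripotential-energy term; each summand is shown to be convex by pluripotential-theoretic methods along $C^{1,1}$ geodesics, hence so is $E$.

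Combining these, for any $t\in[0,1]$ the function $s\mapsto E_{\chi,t}(\varphi_s)=(1-t)J_\chi(\varphi_s)+tE(\varphi_s)$ is a non-negative linear combination of two convex functions of $s$, hence convex. When $t<1$, the factor $1-t>0$ multiplies the strictly convex $J_\chi$, and adding the (possibly only weakly) convex $tE$ preserves strict convexity. This gives both conclusions of Proposition \ref{prop1-6}.

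The main obstacle is not really new: both inputs are deep theorems already present in the literature. The only delicate point is that the second-derivative computations are only formal on a $C^{1,1}$ geodesic, and one must pass to the limit through $\epsilon$-regularized geodesics, which is precisely what \cite{chen04} and \cite{Ber14-01, ChenLiPaun14} accomplish. Observe that at the excluded endpoint $t=1$, strict convexity genuinely can fail, matching the fact noted earlier in the paper that the linearization at $t=1$ is only degenerate elliptic, with kernel induced by holomorphic vector fields.
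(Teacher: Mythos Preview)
Your proposal is correct and follows essentially the same route as the paper: the paper also treats Proposition~\ref{prop1-6} as the combination of the strict convexity of $J_\chi$ from \cite{chen04} (whose proof via $\epsilon$-regularized geodesics is reproduced in Section~\ref{sect3}) with the convexity of $E$ along $C^{1,1}$ geodesics from \cite{Ber14-01,ChenLiPaun14}, and then observes that the nonnegative linear combination $(1-t)J_\chi+tE$ inherits convexity, strictly so when $t<1$.
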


When $t=0$, this is due to \cite{chen04}; when $t=1,$ this was first conjectured by the author and proved in recent works \cite{Ber14-01,ChenLiPaun14}.
The proposition above is essentially a combination of these two results (cf. \cite{Ber14-01,ChenLiPaun14}) . As corollary, we proved that
 
 \begin{cor} \label{cor1-7}
 For any $t \in [0,1]$, the functional $ E_{\chi,t}$ is bounded from below if Equation (\ref{eq:continouspath1}) has a solution for $t$.  \end{cor}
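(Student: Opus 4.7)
\bigskip

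\noindent\textbf{Proof proposal for Corollary \ref{cor1-7}.}
The plan is the standard ``critical point of a convex functional is a global minimum'' argument, adapted to the Mabuchi setting with $C^{1,1}$ geodesics. Let $\varphi_0 \in \cH$ be a solution to Equation (\ref{eq:continouspath1}) at parameter $t$, and let $\varphi_1 \in \cH$ be arbitrary. I would connect $\varphi_0$ and $\varphi_1$ by a $C^{1,1}$ geodesic segment $\{\varphi_s\}_{s\in[0,1]}$ in $\cH$, whose existence is Chen's theorem on geodesics in the space of K\"ahler potentials. The goal is to show $E_{\chi,t}(\varphi_1) \ge E_{\chi,t}(\varphi_0)$, which immediately yields the lower bound since $\varphi_1$ is arbitrary.

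The key input is Proposition \ref{prop1-6}: the function $f(s) := E_{\chi,t}(\varphi_s)$ is convex on $[0,1]$. Combined with the standard first-variation formula for the twisted K-energy, namely
\begin{equation*}
\frac{d}{ds} E_{\chi,t}(\varphi_s)\Big|_{s=0} = \int_M \dot\varphi_0\,\bigl[(1-t)\tr_{\varphi_0}\chi - tR_{\varphi_0} - C_t\bigr]\,\omega_{\varphi_0}^{[n]},
\end{equation*}
the critical point condition at $\varphi_0$ (which is exactly Equation (\ref{eq:twisted cscK})) forces the right-hand-side integrand to vanish identically. Hence $f'(0) = 0$, and convexity of $f$ then gives $f(s) \ge f(0)$ for all $s \in [0,1]$, which is the desired inequality at $s=1$.

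The main technical point to verify is that the first-variation formula remains valid at the endpoint $\varphi_0$ when the geodesic is only $C^{1,1}$. This is legitimate because $\varphi_0$ is a smooth solution of a fourth-order elliptic (or semi-elliptic) equation, the initial velocity $\dot\varphi_0$ lies in $L^\infty$ by the $C^{1,1}$ regularity of the geodesic, and the coefficients $(1-t)\tr_{\varphi_0}\chi - tR_{\varphi_0} - C_t$ depending on $\varphi_0$ are smooth and bounded. One may also interpret the argument via the one-sided derivative $f'(0^+)$ guaranteed by convexity, and note that it coincides with the integral above by dominated convergence.

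The only subtlety that could be considered a genuine obstacle is establishing that $f'(0^+)$ from the weak geodesic agrees with the smooth first-variation expression; this however has been carried out in the works of Berman--Berndtsson \cite{Ber14-01} and Chen--Li--Paun \cite{ChenLiPaun14} cited above, which are precisely the ingredients underlying Proposition \ref{prop1-6}. Thus once Proposition \ref{prop1-6} and the existence of $C^{1,1}$ geodesics are granted, the corollary follows by a few lines of convex-analysis reasoning.
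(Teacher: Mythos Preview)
Your proposal is correct and follows precisely the approach the paper has in mind: the paper does not spell out a proof of Corollary~\ref{cor1-7} but simply remarks (after Proposition~\ref{prop1-6} and again at the end of Section~\ref{sect3}) that the lower bound is an immediate consequence of the convexity of $E_{\chi,t}$ along $C^{1,1}$ geodesics together with the fact that a twisted cscK metric is a critical point. Your write-up supplies exactly the missing convex-analysis details---vanishing of the first variation at the solution and the inequality $f(s)\ge f(0)+s f'(0^+)$---and correctly flags that the only delicate point (well-definedness of the one-sided derivative at the smooth endpoint of a $C^{1,1}$ geodesic) is handled by the same references \cite{Ber14-01,ChenLiPaun14} used for Proposition~\ref{prop1-6}.
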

 
 \begin{cor} \label{cor1-8}
 For any $t \in [0,1)$, the twisted cscK metric is unique in its K\"ahler class.   \end{cor}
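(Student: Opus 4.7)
The plan is to derive the uniqueness statement from the strict convexity of $E_{\chi,t}$ along $C^{1,1}$ geodesics established in Proposition \ref{prop1-6}. Let $\varphi_0,\varphi_1\in\cH$ be two potentials both solving the twisted cscK equation (\ref{eq:twisted cscK}) for the same $t\in[0,1)$. By Definition \ref{def1-4} and the first variation formulae for $J_\chi$ and $E$ recorded above, these are precisely the critical points of $E_{\chi,t}=(1-t)J_\chi+tE$. The overall strategy is to join $\varphi_0$ and $\varphi_1$ by a $C^{1,1}$ Mabuchi geodesic and use strict convexity to force this geodesic to be trivial modulo additive constants, so that $\omega_{\varphi_0}=\omega_{\varphi_1}$.

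The first step is to promote each critical point to a global minimizer of $E_{\chi,t}$. For any $\psi\in\cH$, connect $\varphi_0$ to $\psi$ by a $C^{1,1}$ geodesic $\{\varphi_s\}_{s\in[0,1]}$ and set $g(s)=E_{\chi,t}(\varphi_s)$. By Proposition \ref{prop1-6}, $g$ is convex on $[0,1]$. Since $\varphi_0$ is smooth and $\dot\varphi_s$ extends continuously up to $s=0$, the one-sided derivative
$$g'(0^+)=\int_M \dot\varphi_s\big|_{s=0}\cdot\big[(1-t)(\tr_{\varphi_0}\chi-\underline{\chi})-t(R_{\varphi_0}-\underline{R})\big]\,\omega_{\varphi_0}^{[n]}$$
is well-defined and vanishes, since the bracketed factor is identically zero by (\ref{eq:twisted cscK}). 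Convexity then yields $E_{\chi,t}(\psi)=g(1)\geq g(0)=E_{\chi,t}(\varphi_0)$, so $\varphi_0$ is a global minimizer of $E_{\chi,t}$; the same argument applied to $\varphi_1$ gives $E_{\chi,t}(\varphi_0)=E_{\chi,t}(\varphi_1)=\inf_{\cH}E_{\chi,t}$.

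The second step is to join $\varphi_0$ and $\varphi_1$ directly by a $C^{1,1}$ geodesic $\{\varphi_s\}_{s\in[0,1]}$ and put $f(s)=E_{\chi,t}(\varphi_s)$. Then $f$ is convex on $[0,1]$ with $f(0)=f(1)=\inf E_{\chi,t}$, and combined with the trivial bound $f(s)\geq \inf E_{\chi,t}$ this forces $f\equiv \inf E_{\chi,t}$. Because $t<1$, the strict convexity clause of Proposition \ref{prop1-6} then forces the geodesic to be trivial in the sense that $\varphi_1-\varphi_0$ is constant on $M$; hence the K\"ahler metrics $\omega_{\varphi_0}$ and $\omega_{\varphi_1}$ coincide, as claimed.

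The main obstacle is interpreting and applying the strict convexity statement in the genuine $C^{1,1}$ regularity setting: one needs the sharpened form saying that $f$ constant along a $C^{1,1}$ geodesic forces the geodesic to be trivial modulo additive constants. For the $J_\chi$ component, which enters with positive weight $(1-t)>0$, this degeneracy characterization is precisely the content of the strict convexity result of \cite{chen04}; combined with the (non-strict) convexity of the K-energy piece from \cite{Ber14-01, ChenLiPaun14}, it supplies the required rigidity. Once that characterization is in hand, together with the standard fact that $\dot\varphi_s$ is continuous up to the smooth endpoints of a $C^{1,1}$ geodesic, the remainder of the argument is purely formal.
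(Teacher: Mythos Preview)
Your proposal is correct and follows exactly the route the paper has in mind: the paper does not spell out a proof of Corollary~\ref{cor1-8} but simply remarks (end of Section~\ref{sect3}) that uniqueness for $t<1$ follows from the strict convexity of $E_{\chi,t}$ along $C^{1,1}$ geodesics (Proposition~\ref{prop1-6}), combining \cite{chen04} for the $J_\chi$ piece with \cite{Ber14-01,ChenLiPaun14} for the K-energy piece, and you have supplied the standard details. One small clean-up: your ``trivial bound $f(s)\geq \inf_{\cH}E_{\chi,t}$'' is not literally immediate since the interior potentials $\varphi_s$ are only $C^{1,1}$ and need not lie in $\cH$; it is simpler to note that $f'(0^+)=0=f'(1^-)$ at the smooth critical endpoints, and convexity then forces $f'\equiv 0$, hence $f$ constant, after which your strict-convexity rigidity step applies verbatim.
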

 
% From this last corollary, we can recover Calabi's renown theorem on the structure of extremal K\"ahler metrics \cite{calabi85} which states, if we choose a form $\chi >0$
% which is invariant under a maximal compact subgroup of $Aut(M)$, then the twisted cscK or extremal K\"ahler metric is also symmetric with respect to the same maximal compact subgroup.\\
 
 We can certainly reformulate the well-known YTD conjecture for the twisted cscK metric. 
 
 \begin{conj} There exists a twisted cscK metric if and only if it is twisted K-stable.
 \end{conj}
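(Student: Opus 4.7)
The plan is to attack the two directions of the conjecture separately, building on the twisted moment-map picture of Proposition \ref{prop1-5} together with the convexity of the twisted K-energy $E_{\chi,t}$ in Proposition \ref{prop1-6}.

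For the necessity direction (existence implies twisted K-stability), I would adapt the Stoppa--Donaldson argument to the twisted setting. Given a twisted cscK metric $\omega_{\varphi_t}$, Corollary \ref{cor1-7} supplies a lower bound for $E_{\chi,t}$. For any test configuration of $(M,[\omega])$ endowed with a compatible degeneration of $\chi$, one constructs a weak geodesic ray emanating from $\varphi_t$ via the Phong--Sturm/Chen--Tian approach and computes the asymptotic slope of $E_{\chi,t}$ along the ray. The target identity is that this slope equals the twisted Donaldson--Futaki invariant to be defined in Section 5. Convexity along the ray (Proposition \ref{prop1-6}), combined with boundedness from below, forces the slope to be nonnegative; strict convexity for $t<1$ (Corollary \ref{cor1-8}) then upgrades nonnegativity to strict positivity away from product configurations. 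For the singular endpoints of the ray I would import the Berman--Berndtsson and Chen--Li--Paun machinery already cited in the excerpt.

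For the sufficiency direction (twisted K-stability implies existence), the idea is to run the continuity path (\ref{eq:continouspath1}) and close it up. Openness of the solution set $I\subset(0,1)$ is Theorem \ref{openness}, and Question \ref{q1-6} (or, in the Fano case, the analogous result with $\chi=\mathrm{Ric}\,\omega$) is expected to supply the $t=0$ starting point. The core step is then closedness: given $t_n\to t_\infty\le 1$ with twisted cscK solutions $\varphi_{t_n}$, either a suitable normalization converges smoothly and produces a solution at $t_\infty$, or blow-up occurs. In the latter case, the coercivity of $E_{\chi,t}$ for $t<1$ alluded to in the introduction gives uniform control of the Mabuchi geodesic distance along a subsequence, and a limiting procedure extracts a nontrivial weak geodesic ray with nonpositive asymptotic $E_{\chi,t_\infty}$-slope. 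Via the moment-map picture of Proposition \ref{prop1-5}, such a ray should then be matched with a destabilizing test configuration, contradicting twisted K-stability.

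The hardest part will be the closedness step: passing from geodesic-distance and energy control to full $C^{\infty}$ a priori bounds on $\varphi_{t_n}$ for this fourth-order twisted equation, and separately handling $t_\infty=1$, where the linearization acquires a kernel coming from holomorphic vector fields and the coercivity constants degenerate. Equally delicate is the rigorous conversion of a destabilizing geodesic ray into an \emph{algebraic} test configuration in the twisted category, so that the ``stability implies estimates'' contrapositive is a theorem rather than a heuristic. Making both pieces rigorous is essentially the content of the Yau--Tian--Donaldson problem itself, and progress in the twisted setting is likely to require a parallel analytic and algebraic theory of limits of twisted cscK metrics in the spirit of the Chen--Donaldson--Sun program.
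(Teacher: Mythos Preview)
The statement you are attempting to prove is explicitly labeled a \emph{Conjecture} in the paper, and the paper offers no proof of it whatsoever; it is presented as a twisted analogue of the Yau--Tian--Donaldson conjecture and left open. There is therefore no ``paper's own proof'' to compare your proposal against.

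What you have written is not a proof but a research outline, and you are candid about this in your final paragraph: the closedness step along the continuity path (a priori estimates for a fourth-order equation, control at $t_\infty=1$) and the conversion of a destabilizing geodesic ray into an algebraic test configuration are precisely the missing ingredients that make this a conjecture rather than a theorem. Your sketch of the necessity direction is reasonable and in the spirit of what Dervan \cite{Dervan} later carried out (as the paper notes in a remark in Section~\ref{sect5}), but the sufficiency direction remains genuinely open, and nothing in your proposal closes that gap. In short, your plan correctly identifies the architecture one would expect such a proof to have, but it does not supply the hard analytic and algebro-geometric inputs, and the paper does not either.
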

 
 We also believe that we should have a corresponding notion of twisted Paul's stability which is equivalent to the existence of twisted cscK metric.  It is well known that 
 the existence of K\"ahler-Einstein metrics implies also Paul's stability if $Aut(M, J)$ is discrete. Conversely,  Paul's stability implies that the K-energy is proper in all finite Bergman spaces \cite{paul13} .  For semi-stable orbits,  we believe the following conjecture holds
 
 \begin{conj} If $(M, [\omega], J)$ is destabilized by  $(M, [\omega], J')$ (where the second one admits a cscK metric). Then for any $\chi \in [\omega],$ there
 exists a unique twisted cscK metric for $ (1-s) R_\varphi - s \tr_\varphi \chi = c_s.\;$
 \end{conj}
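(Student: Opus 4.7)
The plan is to use the variational machinery via Proposition \ref{prop1-6} and Corollary \ref{cor1-7}, reducing existence of the twisted cscK metric to coercivity of the twisted K-energy $E_{\chi,t}$ for $t=1-s\in[0,1)$. Although Theorem \ref{thm3-4} already furnishes such coercivity when $(M,[\omega],J)$ itself carries a cscK metric, here the hypothesis is only that a nearby complex structure $J'$ in the same K\"ahler class does. The strategy is therefore to extract a lower bound on the Mabuchi energy $E$ on $\cH(M,\omega,J)$ from the existence of cscK on $(M,[\omega],J')$ via the destabilizing test configuration, and then to recover full coercivity by adding the strictly positive twisting $(1-t)J_\chi$.

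First I would deduce K-semistability of $(M,[\omega],J)$ from the hypothesis. Let $\pi\colon\cX\to\mathbb{C}$ be the test configuration with generic fibre $(M,[\omega],J)$ and central fibre $(M,[\omega],J')$. Because $J'$ admits a cscK metric, the Donaldson--Futaki invariant of $\pi$ vanishes. Associated to $\pi$ is a weak geodesic ray $\varphi_\tau$ on $(M,[\omega],J)$; convexity of $E$ along this ray (Berman--Berndtsson, Chen--Li--Paun, invoked in Proposition \ref{prop1-6}) shows that its asymptotic slope equals the Donaldson--Futaki invariant, hence is zero, so $E$ is bounded below along this ray. Running the same argument along every test configuration, and combining with the semi-stability inherited from the cscK central fibre, would yield the global bound $\inf_{\cH}E>-\infty$.

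Combining this with properness of $J_\chi$ in the Mabuchi geodesic distance $d_1$ (a consequence of Monge--Amp\`ere energy estimates for any strictly positive $\chi$), we obtain
\begin{equation*}
E_{\chi,t}(\varphi)\;=\;(1-t)J_\chi(\varphi)+tE(\varphi)\;\ge\;(1-t)\,c\,d_1(0,\varphi)-C
\end{equation*}
for some $c,C>0$ and every $t\in[0,1)$, so $E_{\chi,t}$ is coercive. A $d_1$-bounded minimizing sequence converges in $\cE^1$ by Darvas completeness, and strict convexity along $C^{1,1}$ geodesics (Proposition \ref{prop1-6}) makes the resulting minimizer $\varphi_\infty$ the unique critical point, which gives the uniqueness part of the conjecture compatibly with Corollary \ref{cor1-8}. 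Smoothness of $\varphi_\infty$ and its identification with a solution of $(1-t)\tr_{\varphi}\chi-tR_\varphi=C_t$ would follow from Chen--Cheng-type a priori estimates for the fourth order equation, which apply because the linearization is strictly elliptic for $t<1$, as noted after Question \ref{q1-6}.

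The hardest step will be the first: rigorously turning the statement that $(M,[\omega],J)$ is \emph{destabilized by a cscK manifold} into a uniform lower bound on $E$. Algebraic K-semistability alone is not yet known in general to force $\inf E>-\infty$; in the Fano case the gap is closed by Chen--Donaldson--Sun \cite{cds12-1}, but in a general polarization one may need either a Paul-type hyperdiscriminant argument or a quantitative version of the Berman--Berndtsson slope formula that handles arbitrary geodesic rays rather than only those coming from test configurations. The regularity step in the twisted $\cE^1$ framework is a secondary but real obstacle; fortunately the strict twisting $(1-t)\chi>0$ keeps the problem uniformly elliptic, so one may reasonably hope that the Chen--Cheng-type estimates adapt without structural changes.
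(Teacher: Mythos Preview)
The statement you are attempting is labelled a \emph{Conjecture} in the paper, and the paper offers no proof or even a sketch; it is posed as an open problem motivated by the analogy with Paul's stability and the Bando--Mabuchi method. There is therefore nothing to compare your proposal against.

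Your outline is a reasonable variational strategy, and you are honest about where it breaks. Two remarks. First, the step ``destabilized by a cscK central fibre $\Rightarrow$ $\inf_{\cH}E>-\infty$'' is the heart of the matter and is genuinely open in the generality of an arbitrary polarization; the paper's own Theorem \ref{thm3-4} only gives coercivity of $E_{\chi,t}$ once one already knows $E$ is bounded below (item (2) there), so your reduction is correct but the missing implication is exactly what the conjecture is asking you to supply. Invoking the slope formula along the single destabilizing ray gives boundedness along that ray only; passing to a global lower bound requires either a compactness argument for geodesic rays or a uniform stability statement neither of which is available here. Second, your claim that $J_\chi$ is proper in $d_1$ for \emph{any} $\chi>0$ is not quite right: properness of $J_\chi$ (or even a lower bound) uses that the critical equation $\tr_\varphi\chi=\underline{\chi}$ is solvable, which is guaranteed in the conjecture because $\chi\in[\omega]$, but is not automatic for an arbitrary positive form. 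With $[\chi]=[\omega]$ the paper's argument in the proof of Theorem \ref{thm3-4} does give what you need, so this is a fixable point rather than a real gap. The regularity step via Chen--Cheng-type estimates is anachronistic relative to the paper but is now the accepted route; it would indeed close the argument once coercivity is in hand.
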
 
 
\begin{theo}[\cite{chen-Zeng14}] Let $\chi>0$ be any closed, positive $(1,1)$-form in $[\omega].\; $ If there exists a cscK metric,  then for $t\in [0,1]$
close enough to $1$, there exists a solution to Equation  (\ref{eq:continouspath1}).
\end{theo}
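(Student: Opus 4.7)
The plan is to apply the implicit function theorem at the known cscK solution. Let $\omega_c = \omega + \sqrt{-1}\partial\bar\partial \varphi_c$ denote the given cscK metric in $[\omega]$, and parametrize nearby K\"ahler potentials by $\omega_\varphi = \omega_c + \sqrt{-1}\partial\bar\partial \varphi$ for $\varphi$ small in $C^{k+4,\alpha}_0(M,\omega_c)$ (the mean-zero subspace). Define
$$
\mathcal{F}(t, \varphi) \;=\; (1-t)\bigl(\tr_{\omega_\varphi}\chi - \underline{\chi}\bigr) \;-\; t\bigl(R_{\omega_\varphi} - \underline{R}\bigr),
$$
so that $\mathcal{F}(1, 0) = 0$ and the image of $\mathcal{F}$ lies in the mean-zero functions against $\omega_\varphi^n$. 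Computing the Fr\'echet derivative in $\varphi$ at $(1, 0)$, the drift term in $DR$ vanishes since $R(\omega_c)$ is constant, so
$$
D_\varphi \mathcal{F}(1, 0)\psi \;=\; \mathcal{L}_c^*\mathcal{L}_c\,\psi,
$$
where $\mathcal{L}_c = \bar\partial \nabla^{1,0}$ is the Lichnerowicz operator of $\omega_c$. This is a self-adjoint, non-negative, fourth-order elliptic operator whose kernel on mean-zero functions is the space $\mathfrak{h}_0$ of holomorphy potentials of $\omega_c$, i.e.\ real parts of Hamiltonian vector fields in the reduced algebra $\mathrm{aut}_0(M,J)$.

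When $\mathfrak{h}_0 = 0$ (discrete reduced automorphism group), the linearization is an isomorphism $C^{k+4,\alpha}_0 \to C^{k,\alpha}_0$ and the implicit function theorem directly furnishes a family $\varphi_t$ solving $\mathcal{F}(t, \varphi_t) = 0$ for $t$ in a neighborhood of $1$; elliptic regularity then upgrades this to a smooth family of twisted cscK metrics. In the general case with non-trivial $\mathrm{aut}_0(M,J)$, the plan is to split $C^{k+4,\alpha}_0 = \mathfrak{h}_0 \oplus \mathfrak{h}_0^\perp$ in the $L^2(\omega_c)$-sense and proceed in two stages. First, solve the projection of $\mathcal{F}(t,\cdot) = 0$ onto $\mathfrak{h}_0^\perp$ by the IFT on the slice $\mathfrak{h}_0^\perp$, where the restriction of $\mathcal{L}_c^*\mathcal{L}_c$ is invertible; this produces $\varphi_t^\perp$ close to $0$. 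Second, absorb the residual $\mathfrak{h}_0$-component of $\mathcal{F}(t, \varphi_t^\perp)$ by pulling back the candidate metric by a small $g_t \in \mathrm{Aut}_0(M,J)$. The solvability of the finite-dimensional equation for $g_t$ reduces to a Futaki-type pairing of $[\chi]$ against $\mathfrak{h}_0$; since $\chi$ lies in $[\omega]$, this pairing coincides with the classical Futaki invariant of $[\omega]$, which vanishes because $\omega_c$ is cscK, allowing the argument to close.

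The main analytic obstacle is the kernel $\mathfrak{h}_0$ in the non-discrete automorphism case, which blocks a naive implicit function theorem at $t = 1$. The resolution follows the template of LeBrun--Simanca's deformation theorem for extremal K\"ahler metrics: decompose the problem into a transverse IFT on the slice $\mathfrak{h}_0^\perp$ and a finite-dimensional IFT along the $\mathrm{Aut}_0(M,J)$-orbit, with the latter obstruction vanishing precisely because of the hypothesis $\chi \in [\omega]$ combined with the existence of the cscK metric.
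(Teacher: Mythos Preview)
The paper does not actually prove this theorem; it is quoted from \cite{chen-Zeng14}, and the only methodological hint given is that the argument ``follows the idea of Bando--Mabuchi.'' So your proposal should be measured against that template rather than against an explicit proof in the text.

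Your treatment of the case $\mathfrak{h}_0=0$ (discrete reduced automorphism group) is correct and is indeed the easy case: the Lichnerowicz operator $\mathcal{L}_c^*\mathcal{L}_c$ is an isomorphism on mean-zero functions and the implicit function theorem closes immediately.

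The gap is in the non-trivial automorphism case. Your second stage asserts that the finite-dimensional residual in $\mathfrak{h}_0$ can be killed because the relevant ``Futaki-type pairing of $[\chi]$ against $\mathfrak{h}_0$'' coincides with the classical Futaki invariant of $[\omega]$ and hence vanishes. This is not correct. The obstruction you must kill is
\[
\Pi_{\mathfrak{h}_0}\bigl(\tr_{\omega_c}\chi-\underline\chi\bigr),
\qquad\text{i.e.}\qquad
h\longmapsto \int_M h\,(\tr_{\omega_c}\chi-\underline\chi)\,\omega_c^{[n]}\quad(h\in\mathfrak{h}_0).
\]
Unlike the classical Futaki invariant, this pairing is \emph{not} independent of the choice of representative in the class: it is the first variation of $J_\chi$ along the holomorphic flow generated by $h$, and by Proposition~3.6 of the paper $J_\chi$ is \emph{strictly} convex along such geodesics, so this first variation genuinely depends on the base point $\omega_c$. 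Concretely, writing $\chi=\omega_c+\sqrt{-1}\,\partial\bar\partial f$ one gets $\int_M h\,\Delta_c f\,\omega_c^{[n]}=\int_M(\Delta_c h)f\,\omega_c^{[n]}$, which is nonzero for generic $f$ whenever $\mathfrak{h}_0\neq 0$ (already on $\mathbb{CP}^1$ with the round metric and a generic $\chi>0$ in its class). Moreover, ``pulling back the candidate metric by $g_t\in\mathrm{Aut}_0$'' does not add new directions: at the infinitesimal level it moves the potential inside $\mathfrak{h}_0$, which is exactly the kernel of $\mathcal{L}_c^*\mathcal{L}_c$, so the linearized contribution to the $\mathfrak{h}_0$-component is zero.

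What the Bando--Mabuchi scheme (and \cite{chen-Zeng14}) does instead is a genuinely nonlinear step before the IFT: one first moves along the $\mathrm{Aut}_0$-orbit of $\omega_c$ to a point $g_0^*\omega_c$ where the obstruction above \emph{does} vanish --- equivalently, a critical point of $J_\chi$ (or of the twisted K-energy) restricted to the orbit. Existence of such $g_0$ is a properness/variational statement, not a linear one; once at $g_0^*\omega_c$ the first-order obstruction is zero and the strict convexity of $J_\chi$ along the orbit makes the second-order reduced problem nondegenerate, so the bifurcation/IFT argument goes through. Your outline skips this selection of $g_0$ and replaces it with an incorrect vanishing claim; that is the step to repair.
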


We really follow the idea of Bando-Mabuchi to solve the path for $1-t$ small enough.  In their case, they need to solve the \emph{Aubin continuity path} along the way
from $t=1$ to $t=0.\;$ In the present situation, it is technically not feasible to do ``weak compactness" for a family of \emph{twisted cscK metrics} yet. Nonethelese, the problem of weak compactness of twisted cscK metric is one of the fundamental problems
one ultimately need to address (cf. \cite{Tian-Viaclovsky}  \cite{chen-weber} and references therein for discussion on this important
topics).

\subsection{Related problems}\label{sect1-3}

Following our discussions about this one parameter path (Equation (\ref{eq:continouspath1})), it makes sense to define

\begin{defi}\label{def1-11} For any $\chi > 0, $ define $R (\chi)$ to be the supremum of time $t\in [0,1]$ for which this equation is solvable.
\end{defi} 

In view of the \emph{openness theorem}(Theorem \ref{openness}), we make the following conjecture:

\begin{conj}\label{conj1-12}
 For any two closed, positive $(1,1)$-forms $\chi_1, \chi_2$ in the same cohomology class, we have
\[
R(\chi_1) = R(\chi_2).
\]
\end{conj}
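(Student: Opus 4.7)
The plan is a continuity method in the $(1,1)$-form itself. Since the cone of strictly positive closed $(1,1)$-forms is convex, the interpolation $\chi_s = (1-s)\chi_1 + s\chi_2$, $s \in [0,1]$, is a smooth family of strictly positive forms all lying in the common class $[\chi_1]=[\chi_2]$. By symmetry it is enough to prove $R(\chi_2) \geq R(\chi_1)$, so fix any $t_0 \in [0,1)$ for which Equation (\ref{eq:continouspath1}) is solvable with $\chi=\chi_1$, and set
\[
S \;=\; \{\,s\in[0,1] \,:\, \text{Equation }(\ref{eq:continouspath1})\text{ with }\chi=\chi_s\text{ is solvable at time }t_0\,\}.
\]
Then $0\in S$, and the goal is to show $1\in S$; the boundary case $t_0=1$ is trivial since the resulting cscK equation $R_\varphi=\underline R$ does not involve $\chi$.

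\emph{Openness of $S$.} At a solution $\varphi_s$, the linearization of $\varphi\mapsto (1-t_0)\tr_\varphi\chi_s - t_0 R_\varphi - C_{t_0}$ in $\varphi$ is the same strictly elliptic operator (of order four for $t_0>0$, order two for $t_0=0$) that drives the proof of Theorem \ref{openness}; only its $\chi_s$-dependent coefficients deform smoothly with $s$. The implicit function theorem argument of Theorem \ref{openness} therefore applies verbatim, with the roles of $t$ and $\chi$ interchanged, to give that $S$ is open in $[0,1]$.

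\emph{Closedness of $S$.} Writing $\chi_2-\chi_1 = i\partial\bar\partial f$ via the $\partial\bar\partial$-lemma, and using that $\underline{\chi_s}=\underline{\chi_1}$ is $s$-independent, a short calculation from the derivative formula (\ref{eq:jfunctional}) integrated along any path from $0$ to $\varphi$ yields
\[
J_{\chi_s}(\varphi) - J_{\chi_1}(\varphi) \;=\; s\!\int_M f\,\bigl(\omega_\varphi^{[n]} - \omega^{[n]}\bigr),
\]
which is bounded by $2\|f\|_\infty\mathrm{Vol}(M)$, uniformly in $\varphi$ and $s$. Consequently the twisted K-energies $E_{\chi_s,t_0}=(1-t_0)J_{\chi_s}+t_0 E$ differ from $E_{\chi_1,t_0}$ by a uniformly bounded amount. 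Since existence of a solution at $s=0$ makes $E_{\chi_1,t_0}$ coercive in geodesic distance by Theorem \ref{thm3-4}, this coercivity transfers uniformly to every $E_{\chi_s,t_0}$. Combined with the convexity of Proposition \ref{prop1-6} and the fact that each $\varphi_{s_k}$, being a critical point of the strictly convex $E_{\chi_{s_k},t_0}$, is its global minimizer, one obtains a uniform bound on $d(\varphi_{s_k},\varphi_0)$ along any sequence $s_k\to s_*$ in $S$.

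The main obstacle is to convert this uniform geodesic-distance bound into uniform higher-order a priori estimates on the $\varphi_{s_k}$ --- an analytic problem of exactly the same depth as the closedness step in the $t$-direction along the original path (\ref{eq:continouspath1}), which the present paper explicitly leaves open. Granted such uniform regularity and a standard elliptic bootstrap, one extracts a smooth limit $\varphi_{s_*}$ solving the equation at $s_*$, which closes $S$ and, together with openness, forces $S=[0,1]$. In particular $1\in S$, giving $R(\chi_2)\geq R(\chi_1)$, and by symmetry $R(\chi_1)=R(\chi_2)$.
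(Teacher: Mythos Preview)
The statement you are addressing is labelled \textbf{Conjecture}~\ref{conj1-12} in the paper; it is posed as an open problem, and the paper offers no proof of it. So there is no ``paper's own proof'' to compare against---only the remark that the analogous statement for Aubin's path was settled by Sz\'ekelyhidi, and that the $t=0$ case follows from Collins--Sz\'ekelyhidi.

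Your strategy (interpolating $\chi_s$ and running continuity in $s$) is natural, and your computation $J_{\chi_s}(\varphi)-J_{\chi_1}(\varphi)=s\int_M f(\omega_\varphi^{[n]}-\omega^{[n]})$ is correct and is a genuinely useful observation: it shows uniform coercivity transfers across the family. The openness step in $s$ is also fine, for the reason you state. However, two gaps remain. First, a minor one: when you invoke Theorem~\ref{thm3-4} to get coercivity of $E_{\chi_1,t_0}$, that theorem carries the standing hypothesis that the $J$-equation $\tr_\varphi\chi_1=\underline\chi$ is solvable (i.e.\ $t=0$ lies in the solvable set for $\chi_1$); you have only assumed solvability at some $t_0\in(0,1)$, so this hypothesis must be added or otherwise justified.

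Second, and decisively, you yourself identify the real obstruction: passing from a uniform geodesic-distance bound on the minimizers $\varphi_{s_k}$ to uniform $C^{4,\alpha}$ (or even $C^0$) control is precisely the unresolved \emph{a priori} estimate problem for twisted cscK metrics. This is not a technicality---it is the core analytic difficulty that the entire continuity programme of the paper is built around, and which remains open even along the $t$-path. Writing ``granted such uniform regularity'' concedes the whole problem. What you have produced is a plausible outline showing that Conjecture~\ref{conj1-12} would follow from the same compactness input needed for the main existence programme; it is not a proof, and the paper does not claim one either.
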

 
In the classical Aubin's continuous path,  this conjecture was proved by G. Sz$\acute{\text{e}}$kelyhidi \cite{Sz11}.  More interestingly, together with T. Collins in \cite{CoSz} they also proved that if this path is solvable for $\chi_1 > 0$
at time $t=0$, then it is also solvable for $\chi_2 > 0$ at $t =0$ provided that $[\chi_1] = [\chi_2].\;$ It will be really interesting to answer 

\begin{q} Can we characterize the polarized K\"ahler manifold  $(M, [\omega])$ where there exists a $(1,1)$-form $\chi > 0$ such that $R(\chi)=1$ but with no cscK metrics in $(M,[\omega])?\;$
\end{q}

Another important problem is to study the following \emph{twisted Calabi flow} for any $t\in [0,1]$, where the family of K\"ahler potentials $\varphi=\varphi_s$ evolves by:
\begin{equation}
{{\partial \varphi}\over {\partial s}} =   t R_{\varphi}- (1-t) \tr_\varphi \chi + C_t. \label{eq:twisted Calabi flow}
\end{equation}
It is interesting and important to establish the short time existence for proper initial metrics.  The author believes this is true, along with
a stability theorem at the neighborhood of twisted cscK metric (as in \cite{chenhe05} for the usual Calabi flow).   Note that in Riemann surface \cite{Pook2015}, J. Pook proved short and long time existence
of the twisted Calabi flow where the form depends on time $t>0.\;$  More importantly, one can adopt two important conjectures
about the Calabi flow to the twisted case here.
\begin{conj}[Chen]\label{conjglobalexistence}
 The twisted Calabi flow exists globally for any smooth initial K\"ahler potential.
\end{conj}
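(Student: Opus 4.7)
The plan is to combine short-time existence with long-time a priori estimates, so as to show that the flow cannot develop a singularity at any finite time. For $t\in(0,1]$, linearizing the right-hand side of (\ref{eq:twisted Calabi flow}) at a smooth reference potential produces a fourth-order operator whose top-order part is $t$ times the Lichnerowicz operator $\mathcal D^*\mathcal D$, together with a second-order correction from $(1-t)\tr_\varphi\chi$; since $t>0$, the equation is strictly parabolic of order four, and a standard implicit function theorem in parabolic $C^{4,\alpha}$ Hölder spaces yields a smooth solution on a maximal existence interval $[0,S)$. For $t=0$ the flow is, up to reparametrization, the J-flow, whose global existence is classical.

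Assuming for contradiction that $S<\infty$, the central analytical input is the monotonicity identity
\begin{equation*}
\frac{d}{ds} E_{\chi,t}(\varphi_s) = -\int_M \bigl((1-t)\tr_{\varphi_s}\chi - t R_{\varphi_s} - C_t\bigr)^2 \omega_{\varphi_s}^{[n]} \leq 0,
\end{equation*}
which follows from the variational formulas (\ref{eq:jfunctional}) and the formula for $dE/dt$ recalled in the introduction. Together with the convexity of $E_{\chi,t}$ along $C^{1,1}$ geodesic segments (Proposition \ref{prop1-6}), this gives integrated-in-time control of the twisted Calabi energy $\int_0^s\!\int_M \dot\varphi^2\, \omega_\varphi^{[n]}\, d\tau$ on $[0,S)$. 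One would then attempt to bound the Mabuchi $d_1$-distance $d_1(\varphi_0,\varphi_s)$ uniformly on $[0,S)$, and via Darvas-type compactness to pass to weak limits as $s\to S^-$; the final aim is to upgrade such weak convergence to $C^\infty$ convergence, which would contradict the maximality of $S$ and hence yield $S=+\infty$.

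The main obstacle is precisely the passage from weak integral bounds to higher-order regularity. Since $R_\varphi$ is a fourth-order quantity in $\varphi$ and there is no fourth-order maximum principle, bounding $R_{\varphi_s}$ pointwise from $L^2$-in-spacetime control is exactly the bottleneck that keeps the classical ($t=1$) Calabi flow conjecture open, even in complex dimension two. For $t\in(0,1)$ the extra positive term $(1-t)\tr_{\varphi_s}\chi$ offers a genuine advantage: once its trace is controlled, $\chi>0$ yields a second-order elliptic estimate on $\omega_{\varphi_s}$, analogous to the J-flow regularity theory. My plan would therefore proceed in stages: first establish global existence for $t$ in a neighborhood of $0$ by perturbing the J-flow analysis of Chen and Chen-He; next, near a twisted cscK metric (when one exists, so that Corollary \ref{cor1-7} provides a lower bound for $E_{\chi,t}$), adapt the neighborhood-stability argument of \cite{chenhe05} to the twisted setting; finally, attempt a continuation principle that uses $\chi$-ellipticity to produce curvature bounds uniform as $t\to 1^-$. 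Bridging from this uniform regime to the pure Calabi flow case $t=1$ is the deepest point and will require genuinely new PDE ideas beyond the framework developed here.
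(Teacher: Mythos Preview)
The statement you are attempting to prove is labeled in the paper as a \emph{Conjecture} (Conjecture~\ref{conjglobalexistence}), not a theorem; the paper offers no proof whatsoever. For $t=1$ this is the long-standing global existence conjecture for the Calabi flow, which remains open even on compact surfaces, and the twisted version is posed here as the natural extension. So there is no ``paper's own proof'' to compare against.

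Your write-up is not a proof but an outline of a program, and to your credit you say so explicitly: you identify the passage from $L^2$-in-time energy control to pointwise curvature bounds as the missing step, note the absence of a fourth-order maximum principle, and concede that the $t\to 1^-$ limit ``will require genuinely new PDE ideas.'' Those admissions are accurate, but they mean the proposal does not establish the conjecture. A few specific points where the sketch overreaches: the monotonicity of $E_{\chi,t}$ and its convexity along geodesics do \emph{not} by themselves give a uniform $d_1$-bound on $\varphi_s$ as $s\to S^-$ without a lower bound on $E_{\chi,t}$, and such a lower bound is only available under additional hypotheses (e.g.\ Corollary~\ref{cor1-7}); the perturbation-of-$J$-flow argument near $t=0$ would at best yield global existence for small $t$ under the Song--Weinkove cone condition on $\chi$, not for arbitrary $\chi>0$; and the ``continuation principle'' linking different values of $t$ is not formulated precisely enough to carry analytic content. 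In short, the proposal is a reasonable heuristic roadmap, consistent with what the paper itself suggests in Section~\ref{sect1-3}, but it should be presented as motivation for the conjecture rather than as a proof.
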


Similarly, we can re-formulate  Donaldson's conjectural pictures at the case of Calabi flow to our settings: 
\begin{conj}\label{conj1-14}
 Suppose the twisted Calabi flows have global existence. Then the asymptotic behavior of the twisted Calabi flow starting from 
$(M, \omega, \chi, J)$ falls into the following possibilities:
\begin{enumerate}
\item The flow converges to a twisted cscK metric on the same complex manifold $(M, J)$;
\item The flow converges, up to a differmorphism, to a twisted extremal K\"ahler metric;
\item The manifold does not admit a twisted cscK metric or twisted extremal metric but the transformed flow $(M, [\omega_t], \chi_t, J_t)$  converges to $(Y, [\omega_\infty], \chi', J' )$
which forms a twisted extremal K\"ahler metric, possibly with at least codimension 2 singularities.   %Furthermore, one can construct a destabilizing test configuration of $(M, [\omega],\chi,J) $ such that $(M, J')$  is the central fiber;
%\item  The transformed flow $J_t$ on $J$ does not converge in smooth topology and singularities develop. %However, one can still make sufficient sense of the limit of $J_t$ to extract a scheme from it, and this scheme can fill in as the central fiber of a destabilizing test configuration.
\end{enumerate}
\end{conj}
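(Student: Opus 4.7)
Since Conjecture \ref{conj1-14} is modeled on Donaldson's well-known conjectural picture for the un-twisted Calabi flow, my proposal should be read as a program rather than as a complete argument. The plan rests on two structural features of the twisted setting: Proposition \ref{prop1-6} gives strict convexity of $E_{\chi,t}$ along $C^{1,1}$ geodesics when $t<1$, and Proposition \ref{prop1-5} realizes $(1-t)\tr_{\varphi}\chi - tR_{\varphi}-C_t$ as a moment map for the infinite-dimensional group of exact symplectic diffeomorphisms. A direct computation along (\ref{eq:twisted Calabi flow}) yields
\[
\frac{d}{ds} E_{\chi,t}(\varphi_s)=-\int_M\bigl((1-t)\tr_{\varphi}\chi-tR_{\varphi}-C_t\bigr)^2\omega_{\varphi}^{[n]}\le 0,
\]
so $E_{\chi,t}$ is monotone decreasing along the flow, and the trichotomy splits cleanly according to whether $E_{\chi,t}$ remains bounded below as $s\to\infty$.

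In the bounded-below case, the integrated monotonicity produces a sequence $s_i\to\infty$ along which the twisted Calabi energy $\int_M((1-t)\tr_{\varphi}\chi-tR_{\varphi}-C_t)^2\omega_{\varphi}^{[n]}$ tends to zero. Granted a compactness theorem for K\"ahler metrics with uniformly bounded twisted K-energy and vanishing limiting twisted Calabi energy (the main analytic input, discussed below), one extracts a subsequential smooth limit. If this limit already lives on $(M,J)$, the strict convexity of $E_{\chi,t}$ together with Corollary \ref{cor1-8} force it to be the unique twisted cscK metric, yielding case $(1)$. If the naive limit fails to close up on $(M,J)$ but $\varphi_{s_i}$ converges after pulling back by the one-parameter family of diffeomorphisms generated by the complex gradient of $tR_{\varphi}-(1-t)\tr_{\varphi}\chi$, then the limit is a twisted extremal K\"ahler metric, giving case $(2)$. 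The openness Theorem \ref{openness} and the stability heuristic sketched in the introduction would then upgrade subsequential convergence to genuine smooth convergence of the flow.

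In the unbounded-below case I would follow the Chen-Wang analysis of the K\"ahler-Ricci flow. The moment-map picture allows one formally to reinterpret the twisted Calabi flow as a gradient flow on the space of integrable complex structures modulo gauge; if $E_{\chi,t}\to-\infty$ then, under global existence, the only mechanism available is degeneration of $J$ along the flow. One then extracts a Gromov-Hausdorff limit $(Y,\omega_{\infty})$ of the evolving metric completions, together with limiting data $(\chi',J')$, and shows via an $\epsilon$-regularity argument (twisted by $\chi$, in the spirit of Cheeger-Colding-Naber) that the singular set of $Y$ has real codimension at least $2$ and that $(Y,\omega_{\infty},\chi',J')$ supports a twisted extremal K\"ahler structure. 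This produces case $(3)$.

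The decisive obstacle is analytical: every one of the three cases depends on a weak compactness theorem for K\"ahler metrics with bounded twisted K-energy and vanishing limiting Calabi energy, whose Gromov-Hausdorff limits have singular set of real codimension at least $2$, together with a partial $C^0$-type estimate adapted to the twisted cscK equation. At present such tools are available only in the K\"ahler-Einstein range via \cite{cds12-1,cds12-2,cds12-3} and \cite{chenwang14-1}, and extending them to arbitrary positive $\chi$ will require substantial new ideas. Moreover the entire program presupposes Conjecture \ref{conjglobalexistence} on global existence; without it one cannot rule out finite-time singularities that fall outside this classification and would require a separate treatment. I therefore regard this sketch as identifying the correct moment-map/Calabi-flow framework in which the conjecture should eventually be attacked, rather than as a route to an immediate proof.
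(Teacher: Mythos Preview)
The statement is a \emph{conjecture} in the paper, not a theorem; the paper offers no proof of it. Conjecture~\ref{conj1-14} is presented as a reformulation of Donaldson's conjectural picture for the (un-twisted) Calabi flow adapted to the twisted setting, and the surrounding text only records related conjectures (e.g.\ Conjecture~\ref{conjglobalexistence}) and references the work of Streets~\cite{Streets12} and Li--Wang--Zheng~\cite{LWZ15} as directions worth pursuing. There is therefore no ``paper's own proof'' to compare your proposal against.

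You already acknowledge this: you explicitly frame your write-up as a program, not a proof, and you correctly identify the decisive missing ingredients (weak compactness for twisted cscK metrics with bounded twisted K-energy, a twisted partial $C^0$ estimate, and global existence of the flow). That self-assessment is accurate and aligns with the paper's own remarks in Section~\ref{sect1-3} that ``the problem of weak compactness of twisted cscK metric is one of the fundamental problems one ultimately need to address.'' Your monotonicity computation for $E_{\chi,t}$ along the flow and the use of strict convexity (Proposition~\ref{prop1-6}) and uniqueness (Corollary~\ref{cor1-8}) to pin down case~(1) are the natural first steps and are consistent with the paper's philosophy. One small caution: your dichotomy hinges on whether $E_{\chi,t}$ is bounded below along the flow, but by Proposition~\ref{prop1-6} and Corollary~\ref{cor1-7} the twisted K-energy is bounded below on all of $\mathcal{H}$ whenever a twisted cscK metric exists, so the ``unbounded below'' branch already presupposes nonexistence on $(M,J)$; this is compatible with the intended case~(3), but the logic should be stated that way rather than as an independent energy criterion.

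In short: there is no gap to flag relative to the paper, because the paper proves nothing here. Your program is a reasonable heuristic outline, and its honest list of analytic obstructions matches what the paper itself leaves open.
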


More interestingly, it will be nice to understand the important result of J. Streets \cite{Streets12}  as well as
more recent exciting result on the Calabi flow \cite{LWZ15} in the twisted Calabi flow case. We believe the result can be made more precise
because of the strict convexity of the twisted K-energy functional.\\

Another important conjecture we want to raise  is

\begin{conj}\label{conjproper} For any $t \in (0,1)$, the twisted functional $E_{\chi,t}$ is proper in terms of geodesic distance function 
if and only if Equation (\ref{eq:continouspath1}) has a solution.
\end{conj}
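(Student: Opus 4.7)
The plan has two directions. The forward implication ``Equation (\ref{eq:continouspath1}) is solvable $\Rightarrow E_{\chi,t}$ is proper'' is already built into the paper via Theorem \ref{thm3-4} (combined with the strict convexity of $E_{\chi,t}$ from Proposition \ref{prop1-6}), so I focus on the converse: properness in geodesic distance should force existence.

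For the converse, I would proceed variationally in the geodesic metric completion $(\cE^1, d_1)$ of $(\cH, d_1)$. By properness, any minimizing sequence $\{\varphi_k\}\subset\cH$ for $E_{\chi,t}$ (suitably normalized, e.g.\ by fixing the Aubin-Mabuchi functional) satisfies $d_1(\varphi_k, 0)\leq C$, so after extracting a subsequence one obtains $d_1$-convergence $\varphi_k \to \varphi_\infty \in \cE^1$. Both $J_\chi$ and $E$ extend to $\cE^1$ as $d_1$-lower semicontinuous functionals (a direct pluripotential extension for $J_\chi$, and the Berman-Boucksom-Guedj-Zeriahi formula for $E$), so $\varphi_\infty$ is a minimizer of $E_{\chi,t}$, unique thanks to Proposition \ref{prop1-6}. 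Taking one-sided directional derivatives of $E_{\chi,t}$ along finite-energy geodesics emanating from $\varphi_\infty$ then yields Equation (\ref{eq:continouspath1}) at $\varphi_\infty$ in a weak, pluripotential sense.

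The genuine obstacle is the regularity of $\varphi_\infty$. For $t\in(0,1)$ the linearization of (\ref{eq:continouspath1}) is uniformly elliptic, with a nondegenerate second-order piece coming from $(1-t)\tr_{\varphi_\infty}\chi$ and a fourth-order piece from $tR_{\varphi_\infty}$; however the equation itself is of fourth order, so classical pluripotential estimates do not apply directly. My proposed scheme is: (a) produce an $L^\infty$ bound on $\varphi_\infty$ by exploiting the positivity of $\chi$ and a Ko\l odziej-type iteration adapted to the coupled equation; (b) upgrade to $C^{1,\bar 1}$ using a Chern-Lu-type inequality, combined with the lower bound on $R_{\varphi_\infty}$ extracted from the weak Euler-Lagrange condition; (c) bootstrap to smoothness via Evans-Krylov and Schauder theory.

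The main difficulty lies in step (a). Unlike the K\"ahler-Einstein setting, the coupling with the fourth-order scalar curvature term prevents a reduction of (\ref{eq:continouspath1}) to a single complex Monge-Amp\`ere equation, so Ko\l odziej's $L^\infty$ estimate is not available off the shelf. One must delicately balance the $J$-contribution against the scalar curvature term, exploiting both the strict positivity of $\chi$ and the strict convexity of $E_{\chi,t}$ along geodesics to extract an $L^\infty$-bound from the variational inequality. Once $L^\infty$ is in hand, steps (b) and (c) should follow along established lines for fourth-order K\"ahler-type equations; but obtaining $L^\infty$ is expected to be the technical heart of any complete proof, and is the same mechanism that makes the $t=1$ (pure cscK) case genuinely harder.
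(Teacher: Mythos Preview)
The statement you are addressing is labeled a \emph{Conjecture} in the paper, and the paper does not claim a proof of both directions. It only establishes the implication ``existence of a twisted cscK metric $\Rightarrow$ $E_{\chi,t}$ is proper'' (what the paper calls the ``necessary part''), and even that under the additional hypothesis that the $J$-equation $\tr_\varphi\chi=\underline{\chi}$ is solvable in $[\omega]$; see Theorem~\ref{thm3-4} and the argument immediately preceding it. Your deferral of this direction to Theorem~\ref{thm3-4} is therefore in line with the paper, though you have silently dropped that extra hypothesis, which the paper's argument genuinely uses (to perturb $\chi$ to $\chi\pm\epsilon\,\text{Ric}\,\omega$ and to invoke openness at nearby $t'$).

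For the converse (properness $\Rightarrow$ existence), the paper offers nothing: this is precisely why the statement remains a conjecture. Your variational outline in $\cE^1$ is a reasonable strategy in spirit (and is indeed the route later pursued for the cscK problem), but the regularity scheme (a)--(c) you sketch does not constitute a proof. Step~(a) is not a Ko\l odziej-type problem: there is no single complex Monge--Amp\`ere equation underlying the fourth-order twisted cscK system, so the iteration you allude to has no starting point. Step~(b) is also problematic: a Chern--Lu inequality bounds $\tr_\omega\omega_\varphi$ via curvature of a \emph{fixed target} metric, whereas here the scalar curvature of the unknown metric enters the equation, so the argument does not close as stated. These missing a priori estimates are exactly what makes Conjecture~\ref{conjproper} open in the paper; the eventual resolution for cscK-type equations proceeds by quite different estimates and does not follow the route you describe.
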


Note that when $t=1$, this is the conjecture on the properness of the K-energy functional ( see Conjecture \ref{chenpropconj} below) .  We remark that this is another reason why we believe Conjecture \ref{conj1-12} is correct.   In Section \ref{sect3}, we will prove the necessary part of this conjecture. \\
 
{\bf{Acknowledgement}} The author wishes to thank his students Yu Zeng and Chengjian Yao for critical help through the preparation of this paper.  Very recently,  Yoshinori Hashimoto
 informed us that he can solve question \ref{q1-6} with an outline of proof. His proof can also be extended to the case $t\in (0,1).\;$
 
\section{Discussions about various known paths}\label{sect2}
%\subsection{A naive path from PDE consideration} 

According to E. Calabi, the famous K\"ahler-Einstein problem can be reduced to a problem of complex Monge-Amp$\grave{\text{e}}$re equation:

\begin{equation}\label{eqn2-5}
\det \left( g_{\alpha \b \beta} + {{\partial^2 \varphi}\over {\partial z_\alpha \partial  \bar {z}_\beta}}\right) =  e^{-\varphi + h_\omega} \det (g_{\alpha \b \beta})
\end{equation}
where 
\[
 \omega_g  +\sqrt{-1} \partial \bar \partial \varphi  > 0\;\; {\rm on\; M}.
\]

To attack the existence problem, one adopts a continuous path 

\begin{equation}
\det \left( g_{\alpha \b \beta} + {{\partial^2 \varphi  }\over {\partial z_\alpha \partial  \bar {z}_\beta}}\right) =  e^{- t \varphi + h_\omega} \det (g_{\alpha \b \beta}),\qquad t\in [0,1]. \label{eq:aubinpath1}
\end{equation}

When $t = 0$, this is Calabi's volume conjecture.  The strategy is to prove the set 

$$\{t:  \in [0,1]\mid {\rm Equation\; (\ref{eq:aubinpath1})\; can\;\;be\;solved\;\; at\;\; time} \;\; t\}$$
is both open and closed.  The openness is more or less standard.
To derive closedness, we need to obtain \emph{a priori estimate} which is independent of $t.\;$
The memorial feature of the resolution of the Calabi conjecture by S.-T. Yau is to reduce the second order estimate to a  $C^0$ estimate on K\"ahler potential, which is followed by a $C^3$ estimate of E. Calabi. This is of course well-known to K\"ahler geometers. However, the following might be less well-known.  A cscK metric equation can be decomposed as a coupled second order equations  which we will describe below.

\begin{align}
\log \frac{\det (g_{\alpha \bar\beta} + \frac{\partial^2 \varphi}{\partial z_\alpha \partial  \bar{z}_\beta})}{\det (g_{\alpha\bar\beta} )}& = F, \label{eqn2-7}\\
\Delta_\varphi F & = - \underline{R}  +\tr_\varphi  \text{ Ric}_g\label{eqn2-8}
\end{align}

%Note that the Monge Amp$\grave{\text{e}}$re equation with constant right hand side is a special case of this coupled equation. It has a large body of existing exciting
%work which we refer to \cite{GT} \cite{Tru-Wang} for updated reference in this important subject.  
The following proposition is known:

\begin{prop} \label{prop2-1}
If $g_\varphi$ is a cscK metric and it is quasi isometric to the background metric $g$, then the $C^{4,\alpha}$ norm
on $\varphi$ is uniformly controlled.
\end{prop}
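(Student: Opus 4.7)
The plan is to bootstrap from the quasi-isometry hypothesis using the decoupling provided by equations (\ref{eqn2-7})--(\ref{eqn2-8}): first extract uniform $L^\infty$ control on $F$, then promote it to H\"older regularity via a non-divergence elliptic estimate, then apply a complex Evans--Krylov type theorem to the Monge--Amp\`ere equation, and finally bootstrap by Schauder.

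First I would observe that since $\omega_\varphi$ is uniformly equivalent to $\omega$, the ratio $\omega_\varphi^n/\omega^n$ lies in a fixed interval $[c_1,c_2]\subset(0,\infty)$, so $F=\log(\omega_\varphi^n/\omega^n)$ is bounded in $L^\infty$. Quasi-isometry also implies that the operator $\Delta_\varphi$ is uniformly elliptic (in the non-divergence sense) with bounded measurable coefficients, and that $\tr_\varphi\mathrm{Ric}_g$ is uniformly bounded because $\mathrm{Ric}_g$ is a fixed smooth tensor and $g_\varphi^{-1}$ is bounded. Hence the right-hand side of (\ref{eqn2-8}) is in $L^\infty$.

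Next I would apply the Krylov--Safonov Harnack/H\"older estimate to the uniformly elliptic equation (\ref{eqn2-8}) to conclude $F\in C^{\alpha_0}$ for some $\alpha_0\in(0,1)$, with a bound depending only on the quasi-isometry constants and the background geometry. Feeding $F\in C^{\alpha_0}$ into (\ref{eqn2-7}), the complex Monge--Amp\`ere equation
\[
\det\bigl(g_{\alpha\bar\beta}+\varphi_{\alpha\bar\beta}\bigr)=e^F\det(g_{\alpha\bar\beta})
\]
has H\"older right-hand side, and the $C^{1,1}$ bound on $\varphi$ provided by quasi-isometry allows me to invoke the complex Evans--Krylov theorem (in the form proved by Siu, and also available through Caffarelli's work) to obtain a $C^{2,\alpha}$ bound on $\varphi$, after possibly shrinking $\alpha$.

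Once $\varphi\in C^{2,\alpha}$, the coefficients of $\Delta_\varphi$ are themselves $C^{\alpha}$ and $\tr_\varphi\mathrm{Ric}_g\in C^\alpha$, so interior Schauder theory applied to (\ref{eqn2-8}) yields $F\in C^{2,\alpha}$. Plugging this back into (\ref{eqn2-7}) and differentiating, a standard second Schauder iteration on the now-linearized Monge--Amp\`ere equation produces $\varphi\in C^{4,\alpha}$ with the desired uniform bound. The main obstacle, and the only non-routine step, is the first passage from $L^\infty$ to $C^{\alpha_0}$ regularity of $F$: this requires the non-divergence Krylov--Safonov estimate for an operator whose coefficients are merely bounded measurable, together with Evans--Krylov for complex Monge--Amp\`ere to close the loop; everything after this initial gain is a routine Schauder bootstrap.
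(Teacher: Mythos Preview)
Your proof is correct and follows essentially the same bootstrap as the paper: H\"older regularity of $F$ from the uniformly elliptic equation (\ref{eqn2-8}), then complex Evans--Krylov on (\ref{eqn2-7}) to gain $C^{2,\alpha}$ on $\varphi$, then one more Schauder iteration to reach $C^{4,\alpha}$. The only cosmetic differences are that the paper invokes De Giorgi rather than Krylov--Safonov for the first step (either applies, since $\Delta_\varphi$ is self-adjoint with respect to $\omega_\varphi^n$), and for the Evans--Krylov step the paper cites Y.~Wang's observation \cite{yuwang11}, which is the sharp form you actually need here since quasi-isometry only bounds the \emph{complex} Hessian $\partial\bar\partial\varphi$ rather than the full real $C^{1,1}$ norm you tacitly assumed.
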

Here is a few words about the proof of this proposition: since $g_\varphi$ is quasi-isometric, then Equation (\ref{eqn2-8}) is
uniformly elliptic with a bounded right hand side. Therefore, by De Giorgi, $[F]_{C^{\alpha}(M,g)}$
is uniformly bounded. Substituting this into Equation (\ref{eqn2-7}),  it becomes a complex Monge-Amp$\grave{\text{e}}$re equation
with $C^\alpha$ bound on the right hand side. According to theory of  Caffarelli, Evans-Krylov and the recent
observation of Y. Wang \cite{yuwang11} (cf. also Chen-Wang\cite{chenywang14-2}), we know $[\varphi]_{C^{2,\alpha}(M,g)}$ is uniformly
controlled. Iterating from Equation (\ref{eqn2-7}) and (\ref{eqn2-8}) once more, we get $[\varphi]_{C^{4,\alpha}(M,g)}$ is uniformly
controlled.\\

Inspired by this proposition, we propose the following naively looking equation:

\begin{align}
\log \frac{\det (g_{\alpha \bar\beta} + \frac{\partial^2 \varphi}{\partial z_\alpha \partial  \bar{z}_\beta})}{\det (g_{\alpha\bar\beta} )}& = F, \label{eqn2-9}\\
\Delta_\varphi F  
& =  t(- \underline{R}  +\tr_\varphi  \text{Ric } g) \label{eqn2-10}
\end{align}

\noindent for any $t \in [0,1].\;$ Set $I$ to be all $t\in [0,1]$ such that the above equation can be solved. Then, obviously $0\in I.\;$ since it reduces to the well-known Calabi's volume conjecture albeit it is trivial in this setting.
The fundamental problem is if the set $I$ necessarily open?   It is hard to tell directly the answer to this crucial question. More importantly, it is quite difficult to imagine the role
of the sign of ``$\text{Ric } g$'' played in this family of equations. Purely from PDE consideration, I think both positivity and negativity of ``$\text{Ric }g$" have their own advantages and pitfalls.
However, these two signs are distinctly different from geometric perspective. In fact, the \emph{openness} depends crucially on the sign of ``$\text{Ric }g$''.  It is open if  it is positive.  This can be readily seen if we re-write this path  of pair of equations  as
\begin{equation}
 R_\varphi = (1-t) \tr_\varphi \text{Ric }g  +  t \underline{R} \label{path3}
\end{equation}
This is very similar to Equation  (\ref{eq:continouspath1}) with $\chi = \text{Ric }g$. Thus, in light of Theorem \ref{openness}, the \emph{openness} holds as long as $\text{Ric }g > 0$.
It is not clear what happens if $\text{Ric }g < 0.\;$\\

On the other hand, we observed that there is an untraced version of this continuous path, which is the well-known \emph{Aubin path}. Note that in K\"ahler-Einstein settings, the Aubin path reads as

\begin{equation}
(1-t)\chi-t \text{ Ric }\omega=\frac{C_t}{n}\omega \label{eq:aubinpath2}
\end{equation}

This casts the ``traditional Aubin path (\ref{eq:aubinpath1})" into new light: that this is also a continuous family of twisted cscK metrics over parameter $t\in [0,1].\; $ It is not difficult
to see
\begin{prop} Along the Aubin path, if $(M,[\omega])$ is K semi-stable, then for every $t \in [0,1)$, it is twisted K-stable.
\end{prop}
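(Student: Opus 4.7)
The plan is to deduce twisted K-stability from K semi-stability by decomposing the twisted Donaldson-Futaki invariant into an ordinary DF part and a non-negative ``$J$-type'' contribution coming from the twist form $\chi$, then exploiting the strict positivity of the latter when $\chi>0$.

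First, I would invoke the formulation of twisted K-stability promised in Section \ref{sect5}, following Donaldson \cite{Dona02}, Stoppa \cite{Stoppa09}, and Lejmi-Sz$\acute{\text{e}}$kelyhidi \cite{Szekelyhidi14}. For a test configuration $(\mathcal{X},\mathcal{L})$ of $(M,[\omega])$, Proposition \ref{prop1-5} (moment-map interpretation of the twisted cscK equation) lifts to the algebro-geometric level as a decomposition
\[
\mathrm{DF}_{t,\chi}(\mathcal{X},\mathcal{L}) \;=\; t\cdot \mathrm{DF}(\mathcal{X},\mathcal{L}) \;+\; (1-t)\cdot \mathrm{F}_\chi(\mathcal{X},\mathcal{L}),
\]
where $\mathrm{F}_\chi$ is the algebro-geometric analogue of the derivative of the $J_\chi$-functional along the one-parameter degeneration defined by $(\mathcal{X},\mathcal{L})$. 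This decomposition is simply the test-configuration version of the splitting $E_{\chi,t}=(1-t)J_\chi+tE$.

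Second, I would show that $\mathrm{F}_\chi(\mathcal{X},\mathcal{L})\geq 0$ for every test configuration, with equality if and only if the configuration is a product. This is where the positivity hypothesis $\chi>0$ enters crucially: $\mathrm{F}_\chi$ can be written as an intersection number on the total space of the test configuration pairing $\chi$ with suitable powers of the relatively ample polarization and its equivariant completion. Positivity of $\chi$, combined with nefness of $\mathcal{L}$ on the general fiber and the Khovanskii-Teissier inequality, yields non-negativity; strictness for non-product configurations is the content of the positivity lemmas of Stoppa \cite{Stoppa09} and Lejmi-Sz$\acute{\text{e}}$kelyhidi \cite{Szekelyhidi14}.

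Third, combining the two ingredients: by K semi-stability of $(M,[\omega])$ we have $\mathrm{DF}(\mathcal{X},\mathcal{L})\geq 0$ for every test configuration, and by step two $\mathrm{F}_\chi(\mathcal{X},\mathcal{L})>0$ for every non-trivial $(\mathcal{X},\mathcal{L})$. Hence for any $t\in[0,1)$ and any non-trivial test configuration,
\[
\mathrm{DF}_{t,\chi}(\mathcal{X},\mathcal{L}) \;\geq\; (1-t)\,\mathrm{F}_\chi(\mathcal{X},\mathcal{L}) \;>\; 0,
\]
which is precisely twisted K-stability at time $t$. The main obstacle is the strict positivity in step two, i.e.\ upgrading the Khovanskii-Teissier non-negativity to genuine positivity for non-product configurations; once this is secured, the proof reduces to the linear combination above and works uniformly over $t\in[0,1)$, the failure at $t=1$ reflecting exactly the gap between K semi-stability and K-stability for the untwisted problem.
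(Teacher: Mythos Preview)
The paper states this proposition with ``it is not difficult to see'' and gives no explicit proof; the intended argument is the Linear Interpolation proposition in Section~\ref{sect5}, which is exactly your decomposition and your step~3 (modulo the paper's sign convention, in which stability means $\mathrm{Fut}_t<0$ rather than $>0$).

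Your step~2, however, has a genuine gap. You claim $\mathrm{F}_\chi>0$ for every non-product test configuration as a consequence of $\chi>0$ alone, via Khovanskii--Teissier and the positivity lemmas of Stoppa and Lejmi--Sz\'ekelyhidi. This is false for a general positive $(1,1)$-form $\chi$: the paper itself notes, just after equation~(\ref{eq:jequation}), that there are cohomological obstructions (for instance $\underline{\chi}\cdot[\omega]>[\chi]$) to solving the $J$-equation, and when that equation is unsolvable $J$-stability can fail. What makes the Aubin path special is that there $[\chi]=[\omega]=2\pi c_1(M)$, so the $J$-equation $\mathrm{tr}_\varphi\chi=\underline{\chi}$ is trivially solved by taking $\omega_\varphi=\chi$; strict convexity of $J_\chi$ along geodesics then yields strict $J$-stability at $t=0$. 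You should replace the general positivity claim by this use of $[\chi]=[\omega]$; once that is in place, the linear interpolation finishes the proof exactly as you outline.
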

Following strategy of \cite{cds12-1, cds12-2,cds12-3}, we may ask 
\begin{q} If there exists a sequence of twisted cscK metric $\omega_i$ for $(M, [\omega],\chi, t_i)$ where $t_i\rightarrow \bar t$ such that 
\begin{enumerate}
\item   $(M, [\omega],\chi, \bar t)$ is twisted K-stable;
\item   Partial $C^0$ estimates holds for $(M, \omega_{i}, \chi, t_i)$,
\end{enumerate}
then does $(M, [\omega],\chi, \bar t)$ admit twisted cscK metric?
\end{q}
According to G. Szekelyhid \cite{Szekelyhidi13},  who in turns follows \cite{cds12-2}, the partial $C^0$ estimate holds for a sequence of K\"ahler metrics over \emph{Aubin path}.   If the answer to aformentioned
question is true, then we can prove Yau's stability conjecture by following Aubin's path.  This leaves a
very interesting question

\begin{q} For any sequence of twisted cscK metric $\omega_i$ for $(M, [\omega],\chi_i, t_i)$, when does the partial $C^0$ estimate hold?
\end{q}
 In the singular case where $\chi=2\pi [D]$ for a divisor $D\subset M$, this equation reduces to
 
\begin{equation}\label{eqn9}
\text{Ric }\omega=-\frac{C_t}{nt}\omega+2\pi\frac{1-t}{t}[D]
\end{equation}
it provides a variant of \emph{Donaldson's continuity path} of conical K\"ahler-Einstein metrics \cite[Equation (27)]{Dona12}. The question is wether the parallel theory holds in this case?

This continuity path could be viewed as a natural generalization of the classical \emph{Aubin's continuity path}
\begin{equation}
\text{Ric }\omega_t=t\omega+(1-t)\chi
\end{equation}
concerning the study of K\"ahler-Einstein metrics. Taking trace of the equation (\ref{eqn9}) gives the notion of ``conical cscK metric'':

\begin{equation}
R_\varphi=-\frac{C_t}{t}+2\pi\frac{1-t}{t}\tr_\varphi [D]
\label{eq:twistedconicalcscKpath}
\end{equation}

We call $\omega$ this \emph{conical constant scalar curvature K\"ahler metric} if it satisfies:
\begin{enumerate}
\item It is quasi isometric to a K\"ahler metric with cone angle $2\pi t$ near the divisor $D$;
\item  The metric tensor is in $C^{\alpha, t}$ near divisor $D\;$ for some $\alpha \in (0, \min(1,{1\over t} -1));\;$
\item The scalar curvature is constant outside the divisor $D$.
\end{enumerate}

We could ask the following\\

\begin{q}  If there is no tangential holomorphic vector in $(M, D), $  can we deform the conical cscK metric with angle $t>0?\;$ 
More interestingly, do we always have a conical cscK metric for $t$ small enough?
\end{q}

% The solution to the Equation (\ref{twisted}) corresponds to 
%the critical point of the \emph{twisted K-energy}

\section{Convexity and moment map pictures}\label{sect3}
\subsection{K-energy functional and moment map picture}
One memorial feature is that, via the work of S. Donaldson \cite{Dona96} and Fujiki, the scalar curvature (regarded as the Lie algebra element) of a K\"ahler metric can be viewed as a moment map for an action of the group of exact sympletic diffeomorphisms on the space of all compatible almost complex structures.  In 1998, the author \cite{chen991} established the existence of $C^{1,1}$ geodesic segment between two smooth K\"ahler potentials and proved
that the K-energy functional is convex along $C^{1,1}$ geodesic segment if $C_1 \leq 0.\;$ In general, the best regularity for geodesic segment might be $C^{1,1}$ only. 
At the time, the best we can prove is the following 

\begin{theo}[\cite{chen05, chen-sun09}]Suppose $\varphi_0,\varphi_1$ are two K\"ahler potential and $\varphi_t (t \in [0,1]) $ is the $C^{1,1} $ geodesic segment connecting $\varphi_0$ to $\varphi_1.\;$ Then,

$$(\mathrm{d}E, \varphi_t'|_{t=0})|_{\varphi_0} \leq  (\mathrm{d} E, \varphi_t'|_{t=1})|_{\varphi_1}$$
\end{theo}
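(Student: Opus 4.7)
The inequality amounts to monotonicity of the endpoint derivative
\[
\frac{d}{dt}E(\varphi_t) \;=\; -\int_M \dot\varphi_t\,\bigl(R_{\varphi_t}-\underline R\bigr)\frac{\omega_{\varphi_t}^n}{n!}
\]
between $t=0$ and $t=1$. The plan is to regularize the $C^{1,1}$ geodesic by smooth $\varepsilon$-geodesics and apply the Chen--Donaldson second-variation identity for $E$, then pass to the limit.

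\emph{Step 1 ($\varepsilon$-geodesic approximation).} For $\varepsilon>0$, I would solve the Dirichlet problem
\begin{equation*}
\bigl(\ddot\varphi^\varepsilon - |\partial\dot\varphi^\varepsilon|^2_{\omega_{\varphi^\varepsilon}}\bigr)\,\omega_{\varphi^\varepsilon}^n \;=\; \varepsilon\,\omega_0^n,\qquad \varphi^\varepsilon(0)=\varphi_0,\ \varphi^\varepsilon(1)=\varphi_1.
\end{equation*}
By the author's earlier existence/regularity theory each $\varphi^\varepsilon$ is smooth in $(x,t)$ with uniform $C^{1,\bar 1}$ bounds, and $\varphi^\varepsilon_t\to\varphi_t$ as $\varepsilon\to 0$. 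Because the boundary data are smooth, the convergence at the endpoints $t=0,1$ is good: $\dot\varphi^\varepsilon(\cdot,i)\to\dot\varphi(\cdot,i)$ uniformly on $M$, which suffices for pairing against the smooth factor $R_{\varphi_i}-\underline R$.

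\emph{Step 2 (second variation along the $\varepsilon$-geodesic).} For any smooth path $\psi_t\in\mathcal H$ direct computation and integration by parts give
\begin{equation*}
\frac{d^2E}{dt^2}(\psi_t) \;=\; \int_M\bigl(\ddot\psi - |\partial\dot\psi|^2_{\omega_\psi}\bigr)\bigl(\underline R - R_\psi\bigr)\frac{\omega_\psi^n}{n!} \;+\; \int_M |\mathcal D\dot\psi|^2_{\omega_\psi}\frac{\omega_\psi^n}{n!},
\end{equation*}
where $\mathcal D=\bar\partial\nabla^{1,0}$ is the Lichnerowicz operator and the second integrand is pointwise nonnegative. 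On the $\varepsilon$-geodesic the defining equation collapses the first integrand to $\varepsilon(\underline R-R_{\varphi^\varepsilon})\omega_0^n/n!$. Integrating in $t\in[0,1]$ and dropping the nonnegative Lichnerowicz term yields
\begin{equation*}
\frac{dE}{dt}(\varphi^\varepsilon_t)\Big|_{t=1} - \frac{dE}{dt}(\varphi^\varepsilon_t)\Big|_{t=0} \;\geq\; \varepsilon\int_0^1\!\!\int_M\bigl(\underline R - R_{\varphi^\varepsilon}\bigr)\frac{\omega_0^n}{n!}\,dt.
\end{equation*}

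\emph{Step 3 (passage to the limit and main obstacle).} As $\varepsilon\to 0$ the left-hand side tends to $\frac{dE}{dt}(\varphi_t)|_{t=1}-\frac{dE}{dt}(\varphi_t)|_{t=0}$, using Step 1. The inequality then follows once one shows the right-hand side tends to $0$, i.e.\ that $\int_0^1\!\int_M R_{\varphi^\varepsilon}\omega_0^n\,dt$ is $o(1/\varepsilon)$. This is the main technical obstacle: the $C^{1,\bar 1}$ bound only yields $\omega_{\varphi^\varepsilon}\leq C\omega_0$, with no uniform lower bound, so $R_{\varphi^\varepsilon}$ may blow up as $\varepsilon\to 0$. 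The way around this is to decompose
\begin{equation*}
R_{\varphi^\varepsilon} \;=\; \mathrm{tr}_{\omega_{\varphi^\varepsilon}}\mathrm{Ric}(\omega_0) \;-\; \Delta_{\omega_{\varphi^\varepsilon}}\log\frac{\omega_{\varphi^\varepsilon}^n}{\omega_0^n},
\end{equation*}
use the lower bound $\omega_{\varphi^\varepsilon}^n\geq c\varepsilon\omega_0^n$ coming directly from the $\varepsilon$-geodesic equation (together with the $L^\infty$ bound on $\ddot\varphi^\varepsilon-|\partial\dot\varphi^\varepsilon|^2$) to control $\log(\omega_{\varphi^\varepsilon}^n/\omega_0^n)$ by $C(1+|\log\varepsilon|)$, and integrate by parts so that derivatives fall on quantities uniformly bounded by the $C^{1,\bar 1}$ estimate. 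This produces an $o(1/\varepsilon)$ bound on the spatial integral, whence the error term in the inequality above vanishes in the limit and the desired monotonicity is obtained.
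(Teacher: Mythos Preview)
The paper does not give its own proof of this theorem; it is quoted from \cite{chen05, chen-sun09}. So there is no in-paper argument to compare against directly. That said, your outline is the natural first attempt, and Step~3 contains a genuine gap.

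The obstacle you name is real, but your proposed fix does not close it. From the $\varepsilon$-geodesic equation together with the uniform upper bound $\omega_{\varphi^\varepsilon}\le C\omega_0$ one only gets $\omega_{\varphi^\varepsilon}^n\ge c\,\varepsilon\,\omega_0^n$, so the smallest eigenvalue of $\omega_{\varphi^\varepsilon}$ relative to $\omega_0$ may be of order $\varepsilon$. Hence $\mathrm{tr}_{\omega_{\varphi^\varepsilon}}\mathrm{Ric}(\omega_0)$ can be pointwise of order $1/\varepsilon$, and after multiplying by $\varepsilon$ you are left with an $O(1)$ contribution, not $o(1)$. The integration-by-parts idea does not rescue this: the term $\Delta_{\omega_{\varphi^\varepsilon}}\log(\omega_{\varphi^\varepsilon}^n/\omega_0^n)$ is being integrated against the \emph{fixed} volume form $\omega_0^n$, not against $\omega_{\varphi^\varepsilon}^n$, so any attempt to integrate by parts forces in the ratio $\omega_0^n/\omega_{\varphi^\varepsilon}^n$, which is again $O(1/\varepsilon)$. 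The quantities that are uniformly bounded by the $C^{1,\bar 1}$ estimate are the mixed second derivatives of $\varphi^\varepsilon$, not the inverse metric or curvature of $\omega_{\varphi^\varepsilon}$; nothing in your sketch explains how derivatives can be made to land only on the former. In short, under the available a~priori bounds the error term is honestly $O(1)$, and this is precisely why full convexity of $E$ along $C^{1,1}$ geodesics remained the open Conjecture~\ref{conj3-3} rather than following from this argument.

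The proofs in the cited references avoid differentiating $E$ twice along the $\varepsilon$-path. In \cite{chen05} the key is the decomposition $E=(\text{entropy})-J_{\mathrm{Ric}\,\omega_0}$ recalled in this paper: the $J$-piece involves only first derivatives of the potential and passes to the $C^{1,1}$ limit directly, while the entropy piece is handled by a separate subharmonicity argument that does not require any lower bound on $\omega_{\varphi^\varepsilon}$. The approach in \cite{chen-sun09} instead goes through K\"ahler quantization and finite-dimensional moment-map convexity. Both routes sidestep the need to control $R_{\varphi^\varepsilon}$ against a fixed background volume form, which is exactly where your Step~3 stalls.
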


However, this theorem leads
to the following conjecture \cite{chen00} by the author. 

\begin{conj} \label{conj3-3}The K-energy functional is convex on any $C^{1,1}$ geodesic segment.
\end{conj}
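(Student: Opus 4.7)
The plan is to prove convexity of the K-energy $E$ along any $C^{1,1}$ geodesic segment by using Berndtsson's subharmonicity theorem for direct image bundles on the entropy part of $E$, combined with the author's 2004 result on the $J$-functional for the remaining lower-order terms. First, invoke Chen-Tian's decomposition
\[
E(\varphi) \;=\; H(\varphi \,|\, \omega) \;+\; \text{(lower-order $J$-type terms involving $-\text{Ric}(\omega)$ and $\underline{R}$)},
\]
where $H(\varphi \,|\, \omega) = \int_M \log(\omega_\varphi^n/\omega^n)\, \omega_\varphi^n$ is the relative entropy. The $J$-type pieces depend on $\varphi$ only through $\partial\bar\partial\varphi$ to first order, and when the reference $(1,1)$-form is positive they are already convex along $C^{1,1}$ geodesics by \cite{chen04}; for a signed reference form one splits $-\text{Ric}(\omega) = \chi^+ - \chi^-$ and re-groups the terms with the $I$-$J$ combination to produce genuinely convex functionals.

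The entropy piece is the hard part. Given smooth endpoints $\varphi_0, \varphi_1$ with $C^{1,1}$ geodesic $\varphi_t$, extend to the strip $S = \{0 < \text{Re}\,\tau < 1\}$ by $\Phi(x,\tau) := \varphi_{\text{Re}\,\tau}(x)$. The defining property of the geodesic, namely the degenerate homogeneous Monge-Amp\`ere equation on $M \times S^1$, is precisely the assertion that the $(1,1)$-current $p_1^*\omega + \sqrt{-1}\partial\bar\partial \Phi$ is nonnegative on $M \times S$. Applying Berndtsson's positivity theorem to the direct image of $K_{M \times S/S}$ twisted by the weight $\Phi$ then gives that $\tau \mapsto H(\varphi_{\text{Re}\,\tau})$ is plurisubharmonic on $S$, hence $t \mapsto H(\varphi_t)$ is convex on $[0,1]$. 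Combined with convexity of the $J$-type pieces, convexity of $E$ along the $C^{1,1}$ geodesic follows.

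The main obstacle is that $\Phi$ need not be strictly plurisubharmonic, so Berndtsson's theorem applies only in its degenerate form; one must independently verify sufficient $L^1_{\text{loc}}$ regularity of the entropy density to make the weighted version rigorous. A fall-back route, which handles this via explicit approximation, is the $\varepsilon$-geodesic method: solve $(\ddot\varphi - |\partial\dot\varphi|_\varphi^2)\det g_\varphi = \varepsilon \det g$ for smooth families $\varphi_t^\varepsilon \to \varphi_t$ in $C^{1,\bar 1}$, then compute directly along the smooth family
\[
\frac{d^2}{dt^2} E(\varphi_t^\varepsilon) \;=\; \int_M |\mathcal{D}\dot\varphi_t^\varepsilon|^2_{\omega_{\varphi_t^\varepsilon}}\, \omega_{\varphi_t^\varepsilon}^n \;+\; O(\varepsilon),
\]
where $\mathcal{D} = \bar\partial \circ \text{grad}^{1,0}$ is the Lichnerowicz operator, whose kernel consists of holomorphic vector fields. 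The main term is nonnegative; passing $\varepsilon \to 0$ using uniform $L^\infty$ bounds on the densities $\omega_{\varphi_t^\varepsilon}^n / \omega^n$ recovers convexity of $E(\varphi_t)$ in the distributional sense on $(0,1)$ and hence classically on $[0,1]$. Strict convexity for the twisted K-energy on $t \in (0,1)$ then follows immediately from the strict positivity of the second variation of $(1-t)J_\chi$ when $\chi > 0$, whose null directions force $\dot\varphi$ to be $\omega_{\varphi_t}$-harmonic and hence constant on the compact manifold $M$.
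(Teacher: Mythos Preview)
The paper does not actually contain a proof of this statement: it is recorded as Conjecture~\ref{conj3-3}, and immediately afterward the paper simply states that the conjecture was subsequently proved by Berman--Berndtsson \cite{Ber14-01} and independently by Chen--Li--P\u{a}un \cite{ChenLiPaun14}, without reproducing either argument. So there is no ``paper's own proof'' to compare against; what you have written is in effect a sketch of the two cited external proofs rather than a comparison target.

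That said, your outline does track those two references. Your fall-back route---the $\varepsilon$-geodesic approximation with the direct second-variation identity yielding the Lichnerowicz term $\int_M |\mathcal{D}\dot\varphi_t^\varepsilon|^2\,\omega_{\varphi_t^\varepsilon}^n$ plus an $O(\varepsilon)$ error---is essentially the Chen--Li--P\u{a}un strategy and is the more robust of the two sketches you give. Your first route, via the Chen--Tian decomposition and Berndtsson positivity for the entropy, is the Berman--Berndtsson philosophy, but the step where you handle the non-entropy part by writing $-\mathrm{Ric}(\omega)=\chi^+-\chi^-$ and ``re-grouping'' is not how that argument goes and would not work as stated: $J_{\chi^+}-J_{\chi^-}$ is a difference of two convex functionals, which has no sign. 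In \cite{Ber14-01} the energy (non-entropy) part is instead shown to be \emph{affine} along $C^{1,1}$ geodesics via a direct pluripotential computation, not by any positivity splitting; the convexity of the full $E$ then comes entirely from the entropy term. If you want the first route to stand on its own you should replace the $\chi^\pm$ splitting by that affineness argument.
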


New understanding in K\"ahler geometry leads to a proof to this full conjecture by the work of Berman-Berndtsson\cite{Ber14-01} and independently
by Chen-Li-Paun \cite{ChenLiPaun14}.  Another related conjecture is the following

\begin{conj}\label{chenpropconj} There exists a cscK metric if and only if the K-energy functional is coercive in terms of geodesic distance to the maximal
invariant, totally geodesic subspace induced by automorphism group.
\end{conj}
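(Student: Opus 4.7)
The plan is to split the if-and-only-if into its two directions and, in each, use the twisted continuity path (\ref{eq:continouspath1}) as a bridge between the cscK equation at $t=1$ and the $J$-equation at $t=0$. The key inputs are the strict convexity of $E_{\chi,t}$ along $C^{1,1}$ geodesics for $t<1$ (Proposition \ref{prop1-6}), the existence of twisted cscK metrics near $t=1$ when a cscK metric exists (Theorem 1.12 of \cite{chen-Zeng14}), and the necessary half of Conjecture \ref{conjproper}, which the author announces to prove in Section \ref{sect3}.

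For the necessary direction, suppose $\omega_0$ is a cscK metric. By Theorem 1.12, there is $t_0<1$ such that the twisted equation (\ref{eq:continouspath1}) admits a solution $\omega_{\varphi_t}$ for every $t\in[t_0,1]$, and by Corollary \ref{cor1-7} each $E_{\chi,t}$ is bounded from below. For $t\in[t_0,1)$, the necessary direction of Conjecture \ref{conjproper} (Section \ref{sect3}) gives a geodesic-coercivity estimate
\[
E_{\chi,t}(\varphi) \geq \epsilon_t\, d(\varphi_t,\varphi) - C_t
\]
modulo the totally geodesic subspace induced by $\Aut_0(M,J)$. Since $E_{\chi,t} = (1-t)J_\chi + tE$, one rewrites
\[
E(\varphi) = \frac{1}{t}\bigl(E_{\chi,t}(\varphi) - (1-t) J_\chi(\varphi)\bigr).
\]
Because $J_\chi$ has an $O(d)$ upper bound in geodesic distance, choosing $t$ sufficiently close to $1$ makes the $(1-t)J_\chi$ contribution negligible against $\epsilon_t d$, delivering coercivity of $E$ modulo $\Aut_0(M,J)$.

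For the sufficient direction, assume $E$ is coercive in the sense of the conjecture. Since $J_\chi\geq 0$ grows linearly in geodesic distance, the combination $E_{\chi,t} = (1-t)J_\chi + tE$ is automatically geodesic-coercive for every $t\in(0,1)$. Applying the (still conjectural) sufficient half of Conjecture \ref{conjproper}, one obtains for each $t\in(0,1)$ a twisted cscK metric $\omega_{\varphi_t}$ solving (\ref{eq:continouspath1}), unique up to the $\Aut_0(M,J)$-action by Corollary \ref{cor1-8}. One then sends $t\to 1^-$ and attempts to extract a smooth limit along this family, aided by a Bando--Mabuchi-type argument on the path or a partial $C^0$ estimate for the twisted family in the spirit of \cite{Szekelyhidi13,cds12-2}.

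The main obstacle is precisely the weak compactness for the one-parameter family $\{\omega_{\varphi_t}\}_{t\nearrow 1}$ of twisted cscK metrics, a problem the paper itself explicitly identifies as fundamental and currently unresolved. Without uniform potential or partial $C^0$ bounds, the limit as $t\to 1$ can degenerate to a singular object rather than a smooth cscK metric, leaving an analytic gap between the variational coercivity hypothesis and the existence conclusion. A secondary obstacle is that the sufficient half of Conjecture \ref{conjproper} is itself open, so the argument for each fixed $t\in(0,1)$ would either need to invoke a twisted Calabi flow convergence (Conjecture \ref{conjglobalexistence}, Conjecture \ref{conj1-14}) or a direct variational construction exploiting the strict convexity in Proposition \ref{prop1-6} to bypass the circularity.
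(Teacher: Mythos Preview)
This statement is a \emph{conjecture} in the paper, not a theorem; the paper does not supply a proof. Immediately after stating it the author writes that ``the corresponding conjecture (in terms of geodesic distance) has very little progress,'' and the only result proved in Section~\ref{sect3} in this direction is Theorem~\ref{thm3-4}, which establishes coercivity of the \emph{twisted} K-energy $E_{\chi,t}$ for $t\in(0,1)$ under suitable hypotheses, not of $E$ itself at $t=1$. So there is no paper proof to compare your proposal to; the right question is whether your outline closes the gap the paper leaves open.

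It does not, and the main gap is already in what you present as the completed necessary direction. You invoke the estimate $E_{\chi,t}\geq \epsilon_t\,d-C_t$ from Section~\ref{sect3} and then write $E=\tfrac{1}{t}\bigl(E_{\chi,t}-(1-t)J_\chi\bigr)$, proposing to absorb $(1-t)J_\chi$ by taking $t$ close to $1$. But inspect the proof of Theorem~\ref{thm3-4}: the coercivity constant (the $\epsilon$ in front of the entropy $E_0$) is produced by the openness step, namely by solving at some $t'=t_0+\delta$ and using the slack $t'-t_0$. As $t_0\to 1$ that slack vanishes and $\epsilon_{t_0}\to 0$, so the inequality $\epsilon_t>(1-t)\cdot(\text{linear growth rate of }J_\chi)$ that your argument needs is exactly what is not available. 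In other words, the limit $t\to 1$ is the whole difficulty, and nothing in the paper lets you take it. Your sufficient direction you already correctly flag as conditional on two open problems (the sufficient half of Conjecture~\ref{conjproper} and weak compactness of twisted cscK metrics), so that part is an outline of a program rather than a proof.
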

This should be seen as a continuation of Conjecture \ref{conjproper}.   In K\"ahler-Einstein manifold without holomophic vector field, Ding-Tian proved that the K-energy functional is proper in terms of Aubin functional. 
The corresponding conjecture (in terms of geodesic distance) has very little progress. Very recenty, 
in Fano manifold with no holomorphic vector field,  T. Darvas \cite{Darvas1401} proved that the Ding functional is proper in terms of $L^1$ geodesic distance. 
This work gives a very good indication that the full conjecture above should be true (module some mild modifications). In toric surface and for $t=1$,  X.-H Zhu and B. Zhou can establish a 
weak existence of extremal K\"ahler metric if the modified K-energy functional is proper \cite{zz08}. This conjecture more likely can be established in toric surface settings first. \\

Now we give a proof of  the necessary part of Conjecture \ref{conjproper} with the assumption that $J_\chi$ is bounded from below.

 \begin{proof}  The following functional is well-known 
  \[
J(\varphi) = \int_M\; \varphi \left(\omega^n -\omega_\varphi^n\right) = 
\int_M\; \sqrt{-1}\p \varphi \wedge \b \p \varphi \wedge \left(\displaystyle \sum_{k=0}^{n-1} \omega^k \wedge \omega_\varphi^{n-k-1} \right) > 0.
\]
According to T. Davas's recent work \cite{Darvas1401,Darvas1402}, this functional is equivalent to geodesic distance in $\cal H$ with respect to Mabuchi's metric.  Recall a
decomposition formula of K-energy in \cite{chen00}: 
For any $\phi \in {\cal H}_\omega$, we have
\[\begin{array}{lcl} E(\varphi) & = &  \displaystyle \int_M\;
\ln {{\omega_\varphi^n}\over {\omega_0^n}}\; \omega^{n}  - J_{Ric\;\omega_0} (\varphi). \label{appl:Jfunc}
\end{array}\]
%Here $I$ functional can be defined as 1-form
%\[
%{{d\, I (\varphi(t,\cdot)) }\over {d\, t}}  = \int_M\; {{\partial \varphi}\over {\partial t}} \omega_\varphi^n.
%\]
%This is a normalization  factor which we can normalize it to zero.  Similarly to $J_\chi$, one can define
where $J_{Ric\;\omega_0}$ is the $J_\chi$ functional defined by Formula \ref{eq:jfunctional} for $\chi=Ric\;\omega_0$.

%\[
%{{d\, J_{Ric} (\varphi(t,\cdot)) }\over {d\, t}}  = \int_M {{\partial \varphi}\over {\partial t}}  (Ric \;\omega_0-\underline{R}\;\omega_\varphi) \wedge \omega_\varphi^{n-1}.
%\]  

  The first term of this decomposition formula (of K-energy) is called Entropy functional and we will denote it as $E_0(\varphi)$.   We claim that Entropy functional is proper
  in terms of geodesic distance.  According to Davas's recent work \cite{Darvas1401,Darvas1402}, we only need to prove the entropy functionl
  dominates this $J$ functional. This is a well-known fact and we include a short proof here for completeness (cf. \cite{chen05} for this calculation).  
  According G. Tian, there is a positive constant $\alpha > 0$ which depends only
on the K\"ahler class $ [\omega]$ such that for any $\varphi \in \cH$, we have
\[
\int_M\; e^{-\alpha (\varphi -\sup \varphi)} \omega^n \leq C,
\]
or 
\[
\int_M\; e^{-\alpha (\varphi-\sup\varphi) - \log {{\omega_\varphi^n}\over \omega^n}} \omega_\varphi^n \leq C.
\]
If we set 
\[
\int_M\; \omega^n  = 1, 
\]
 we have
\[
\int_M\; -\alpha (\varphi - \sup \varphi) - \log  {{\omega_\varphi^n}\over \omega^n} \omega_\varphi^n \leq C.
\]
Therefore, 
\[\begin{array}{lcl}
\alpha \sup \varphi & \leq & \alpha \int_M\; \varphi \omega_\varphi^n  + \int_M \;  \log  {{\omega_\varphi^n}\over \omega^n} \omega_\varphi^n\\
& \leq &  \alpha \int_M\; \varphi \omega_\varphi^n  + \int_M \;  \log  {{\omega_\varphi^n}\over \omega^n} \omega_\varphi^n.\\
\end{array}
\]
In other words,
\[
\alpha \int_M\; \varphi \omega^n \leq \alpha \sup \varphi  \leq  \alpha \int_M\; \varphi \omega_\varphi^n  + \int_M \;  \log  {{\omega_\varphi^n}\over \omega^n} \omega_\varphi^n.
\]
Consequently,
\[
\alpha J(\varphi) = \alpha \int_M\; \varphi (\omega^n - \omega_\varphi^n) \leq  \int_M \;  \log  {{\omega_\varphi^n}\over \omega^n} \omega_\varphi^n.
\]
Thus, the entropy functional is proper in terms of geodesic distance.  \\

Now, we are ready to prove the necessary part of this theorem.  We assume that 
\[
 \tr_\varphi \chi =  {[\chi]\cdot [\omega]^{[n-1]}\over{[\omega]^{[n]}}}
 \]
 can be solved in $[\omega].\;$   By convexity, we know that 
$J_\chi$ has lower bound in $\cal H$ first. Then, for $\epsilon > 0$ small enough,
we have
\[
J_\chi \geq  \pm \epsilon J_{Ric\;\omega} - C.
\]
This is because we can solve the corresponding equation
\[
 \tr_\varphi (\chi \pm \epsilon Ric\;\omega)  =  {[\chi]\cdot [\omega]^{[n-1]}\over {[\omega]^{[n]}}}  + \epsilon   {[C_1(M) ]\cdot [\omega]^{[n-1]}\over {[\omega]^{[n]}}}
 \]
 via perturbation for small enough $\epsilon > 0.\;$ The desired inequality then follows from convexity of $J_{\chi \pm \epsilon Ric\;\omega}$ functional over $C^{1,1}$ geodesic segment in $\cal H.\;$

Suppose $\omega_\varphi$ is a twisted cscK metric for $\chi > 0$ and $ t_0 \in (0,1), $ then there exists a small $\delta > 0$ such that
the twisted cscK metric equation can be solved for any $t' \in (t_0, t_0 + 2 \delta) \subset (0,1).\;$  Fix $t' = t_0 + {\delta}.\;$  Then,
\begin{equation}
  E +  {{1-t'}\over {t'}} J_\chi \geq - C.
\end{equation}
Note that the coefficient $ {{1-t}\over {t'}}'$ is strictly less than $ {{1-t_0}\over {t_0}}.\;$ In other words, this inequality holds for any number close enough to $ {{1-t_0}\over {t_0}}.\;$
 Now
 \[
\begin{array} {lcl}  t_0 E + (1-t_0) J_\chi & = & (t_0 -\epsilon) E + (1- t_0-\delta') J_\chi  + \epsilon E + \delta' J_\chi\\
& = & (t_0 -\epsilon) (E + {{1-t_0 -\delta' }\over {t_0 -\epsilon}} J_\chi) + \epsilon E_0 + (\delta' J_\chi + \epsilon J_{Ric\;\omega}) + \epsilon \underline{R} I(\varphi).
\end{array}
\]
Now, for fixed $\delta' $ small enough, we can find $\epsilon $ small enough so that
\[
\delta' J_\chi + \epsilon J_{Ric\;\omega} \geq -C
\]
and
\[
E + {{1-t_0 -\delta' }\over {t_0 -\epsilon}} J_\chi \geq -C.
\]
 Thus, we prove
\[
t_0 E + (1-t_0) J_\chi \geq \epsilon E_0 -C
\]
It follows that the twisted K-energy is proper in terms of geodesic distance.
  \end{proof} 
 In summary, we prove
 \begin{theo}\label{thm3-4} Suppose  that 
 \[
 \tr_\varphi \chi =  {[\chi]\cdot [\omega]^{[n-1]}\over {[\omega]^{[n]}}}
 \]
 can be solved in $[\omega].\;$  For any $t \in (0,1)$, the twisted K-energy functional is coercive in terms of
 geodesic distance if one of the following condition holds:
 \begin{enumerate}
 \item There exists a constant scalar curvature metric;
 \item  The K-energy functional is bounded from below;
 \item  There existed a twisted cscK metric for $t \in (0,1).\;$
 \end{enumerate}
 \end{theo}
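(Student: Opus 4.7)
The plan is to reduce all three hypotheses to a single coercivity inequality by exploiting the additive structure of $E_{\chi,t}$ and the linearity of $J_\eta$ in $\eta$, eventually dominating $E_{\chi,t}$ by a positive multiple of the entropy functional modulo bounded-below terms. The essential ingredients are Proposition~\ref{prop1-6} (convexity of $J_\eta$ and of $E$ along $C^{1,1}$ geodesics), Corollary~\ref{cor1-7} (solvability of the twisted cscK equation implies $E_{\chi,t}$ is bounded below), the openness Theorem~\ref{openness}, and Darvas's theorem identifying the Aubin functional $J(\varphi)=\int \varphi(\omega^n-\omega_\varphi^n)$ with the Mabuchi $L^1$ geodesic distance up to equivalence. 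The preliminary step I would carry out first is to show that the entropy $E_0(\varphi):=\int_M \log(\omega_\varphi^n/\omega^n)\,\omega_\varphi^n$, appearing in the decomposition $E=E_0-J_{\mathrm{Ric}\,\omega}$, is proper in the geodesic distance: Tian's $\alpha$-invariant inequality combined with Jensen's inequality against the probability measure $\omega_\varphi^n/[\omega]^n$ yields $\alpha J(\varphi)\leq E_0(\varphi)+C$, and Darvas converts this to properness in $d$.

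Cases (1) and (2) I would treat simultaneously. Case (1) implies case (2) via Corollary~\ref{cor1-7} at $t=1$. Under (2), writing
\[
E_{\chi,t} \;=\; \epsilon\, E_0 \;+\; (t-\epsilon)\, E \;+\; J_{(1-t)\chi - \epsilon\,\mathrm{Ric}\,\omega}
\]
for small $\epsilon>0$, the form $(1-t)\chi-\epsilon\,\mathrm{Ric}\,\omega$ is positive and its cohomology class is a small perturbation of the solvable class $(1-t)[\chi]$; a short perturbation argument (or the Collins--Sz\'ekelyhidi cohomological invariance of solvability of the $J$-equation) gives solvability of the perturbed $J$-equation, and hence convexity forces $J_{(1-t)\chi - \epsilon\,\mathrm{Ric}\,\omega}\geq -C$. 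Since $(t-\epsilon)E\geq -C$ by assumption, one obtains $E_{\chi,t}\geq \epsilon E_0 - C$, and coercivity follows from the preliminary step.

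For case (3), given a twisted cscK metric at $t_0\in(0,1)$, the openness Theorem~\ref{openness} yields a solution at $t'=t_0+\delta$ for some small $\delta>0$, and Corollary~\ref{cor1-7} gives $(1-t')J_\chi+t'E\geq -C$. I would then use the algebraic rearrangement
\[
(1-t_0) J_\chi + t_0 E \;=\; (t_0 - \epsilon)\!\left(E + \tfrac{1-t_0-\delta'}{t_0-\epsilon} J_\chi\right) + \epsilon\, E_0 + J_{\delta'\chi - \epsilon\,\mathrm{Ric}\,\omega},
\]
choosing $\delta',\epsilon>0$ so that the effective twist $(t_0-\epsilon)/(1-\epsilon-\delta')$ lies in $(t_0,t_0+2\delta)$ (this forces $\epsilon<t_0\delta'/(1-t_0)$) and so that $\delta'\chi-\epsilon\,\mathrm{Ric}\,\omega$ is positive with solvable $J$-equation. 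Both parenthesized blocks are bounded below by convexity and the preceding cases, again yielding $E_{\chi,t_0}\geq \epsilon E_0 - C$. The main obstacle I expect is precisely this three-way balance: $\epsilon$ and $\delta'$ must simultaneously land the effective twist inside the openness window, keep $\delta'\chi-\epsilon\,\mathrm{Ric}\,\omega$ K\"ahler with solvable $J$-equation, and leave a strictly positive coefficient on the entropy term. This is the step where the proof strategy depends most essentially on Theorem~\ref{openness}, and the accounting—rather than any single analytic estimate—is the pivotal point.
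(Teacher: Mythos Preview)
Your proposal is correct and follows essentially the same route as the paper: entropy properness via Tian's $\alpha$-invariant and Darvas, the decomposition $E=E_0-J_{\mathrm{Ric}\,\omega}$, the algebraic splitting of $E_{\chi,t_0}$ that isolates $\epsilon E_0$, openness (Theorem~\ref{openness}) to shift to a nearby $t'$ where Corollary~\ref{cor1-7} applies, and a perturbation of the solvable $J$-equation to bound the residual $J_{\delta'\chi-\epsilon\,\mathrm{Ric}\,\omega}$ term. Your treatment of cases (1) and (2) is more explicit than the paper's (which writes out only case (3)), and your bookkeeping on the parameter window $\epsilon<t_0\delta'/(1-t_0)$ is exactly the constraint implicit in the paper's choice of $\delta',\epsilon$; invoking Collins--Sz\'ekelyhidi is unnecessary here since elementary openness of the $J$-equation at its critical point already gives the perturbed solvability you need.
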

\begin{conj} For any $C^{1,1}$ K\"ahler potential, we can have a Calabi flow or twisted Calabi flow initiated from this potential and the flow becomes
instantly smooth.  Consequently, the $C^{1,1}$ minimizer of the twisted K-energy is always smooth. 
\end{conj}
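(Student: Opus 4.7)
The plan is to split the conjecture into two parts: (i) short-time existence and instantaneous smoothing of the twisted Calabi flow (\ref{eq:twisted Calabi flow}) from $C^{1,1}$ K\"ahler potentials, and (ii) deducing smoothness of $C^{1,1}$ minimizers of $E_{\chi,t}$ from (i). For (i) I would proceed by approximation. Fix a $C^{1,1}$ potential $\varphi_0 \in \mathcal{H}$, so that $\omega_{\varphi_0}$ is quasi-isometric to the background $\omega$. Regularize $\varphi_0$ by, for instance, a short run of a lower-order smoothing flow, to obtain a family $\{\varphi_0^{(\epsilon)}\}$ of smooth potentials converging to $\varphi_0$ in $C^{1,\alpha}$ and uniformly bounded in $C^{1,1}$. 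For each $\epsilon>0$, classical parabolic theory for fourth-order quasilinear equations produces a smooth solution $\varphi^{(\epsilon)}(s)$ of (\ref{eq:twisted Calabi flow}) on a short interval. The goal is to secure a uniform-in-$\epsilon$ positive lower bound $s_0>0$ for the life span and uniform higher-order bounds on $[s^\prime, s_0]$ for each $s^\prime>0$, so that one may extract a limit flow $\varphi(s)$ which is smooth for $s>0$ and agrees with $\varphi_0$ at $s=0$.

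The core estimates would come from two sources. First, the twisted K-energy $E_{\chi,t}$ is monotone decreasing along (\ref{eq:twisted Calabi flow}) with explicit dissipation, and combined with the strict convexity of $E_{\chi,t}$ along $C^{1,1}$ geodesics (Proposition \ref{prop1-6}) this yields global control of $\varphi^{(\epsilon)}(s)$, in particular an entropy bound on the relative volume $\omega_{\varphi^{(\epsilon)}}^n/\omega^n$. Second, one seeks a parabolic analogue of Proposition \ref{prop2-1}: once a quasi-isometry bound for $\omega_{\varphi^{(\epsilon)}}$ is secured on a short interval, the coupled system (\ref{eqn2-7})--(\ref{eqn2-8}) becomes uniformly parabolic with bounded right-hand side, and De Giorgi--Nash--Moser together with parabolic Evans--Krylov estimates (and the observation of Y.\ Wang used in Proposition \ref{prop2-1}) upgrade $L^\infty$ control to $C^{2,\alpha}$ control on $\varphi^{(\epsilon)}$ on $[s^\prime,s_0]$, from which Schauder bootstrap yields full smoothness. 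The instant smoothing is built into this bootstrap, since the resulting higher-order bounds depend only on $s^\prime>0$ and on the initial $C^{1,1}$ data.

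The main obstacle, in my view, is precisely maintaining this quasi-isometry of $\omega_{\varphi^{(\epsilon)}}$ with $\omega$ uniformly in $\epsilon$ for a short time. The fourth-order nature of the Calabi flow precludes any pointwise maximum principle on the metric or on $R_\varphi$, and starting from merely $C^{1,1}$ data the scalar curvature is only $L^\infty$, so one must rely on integral and energy estimates governed by the convex geometry of $\mathcal{H}$ rather than on direct pointwise bounds; this is at least as delicate as the long-standing short-time existence questions for the Calabi flow from rough initial data, of which Conjecture \ref{conjglobalexistence} is the large-time counterpart. Once (i) is established, (ii) is short: for a $C^{1,1}$ minimizer $\varphi^*$ of $E_{\chi,t}$ with $t \in (0,1)$, run the flow $\varphi(s)$ from $\varphi^*$; monotonicity of $E_{\chi,t}$ and the minimality of $\varphi^*$ force $E_{\chi,t}(\varphi(s)) \equiv E_{\chi,t}(\varphi^*)$, so every $\varphi(s)$ is again a minimizer, and the strict convexity of $E_{\chi,t}$ along the $C^{1,1}$ geodesic joining $\varphi^*$ to $\varphi(s)$ (Proposition \ref{prop1-6}) forces $\varphi(s)=\varphi^*$ modulo an additive constant. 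Since $\varphi(s)$ is smooth for $s>0$, $\varphi^*$ itself must be smooth.
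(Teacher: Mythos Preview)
The statement you are addressing is labeled in the paper as a \emph{Conjecture}, not a theorem; the paper offers no proof. Immediately after stating it, the author only remarks that in the canonical K\"ahler class both assertions have been established through the weak K\"ahler--Ricci flow literature (citing \cite{chentian005}, \cite{chending}, \cite{chentianzhang}, and the subsequent work \cite{EGZ1, EGZ2, GZ, HSDo, NL}), but for the Calabi and twisted Calabi flows in a general class the statement is left open. There is therefore no ``paper's own proof'' to compare your attempt against.

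As for your outline itself: it is a sensible research plan, and you are candid about exactly where it breaks down. The gap you flag --- obtaining, uniformly in the regularization parameter $\epsilon$, a short-time quasi-isometry bound $C^{-1}\omega \le \omega_{\varphi^{(\epsilon)}(s)} \le C\omega$ for a fourth-order flow starting from merely $C^{1,1}$ data --- is not a technicality but the entire content of part (i) of the conjecture. Monotonicity of $E_{\chi,t}$ and the entropy bound it yields do not by themselves control the metric pointwise, and without a maximum principle there is no known mechanism to prevent degeneration on arbitrarily short time intervals as $\epsilon\to 0$. Your invocation of a ``parabolic analogue of Proposition~\ref{prop2-1}'' presupposes precisely the quasi-isometry you need to establish, so the argument is circular at that point. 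Part (ii) of your plan is correct in spirit: once instantaneous smoothing is known, monotonicity plus strict convexity along $C^{1,1}$ geodesics (Proposition~\ref{prop1-6}) does force a $C^{1,1}$ minimizer to coincide with the smooth flow at any positive time, up to a constant. But (ii) is conditional on (i), and (i) remains open --- which is why the paper records the full statement as a conjecture rather than a result.
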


In canonical K\"ahler class, both statements has been asserted true by the efforts of many mathematicians.   The weak K\"ahler Ricci flow was initially introduced to obtain partial regularity for $C^{1,1}$ minimizer on the K-energy functional in \cite{chentian005} and subsequently \cite{chending} and \cite{chentianzhang}.  Now, the weak Ricci flow is itself
an intensive subject of study:  there are lots of interesting results on the regularity properties starting from weak data, see \cite{EGZ1, EGZ2, GZ, HSDo, NL} for a partial list and reference therein.

 % As a corollary to his theorem,
%we can prove
%\begin{prop}\label{prop3-5}
%For Fano manifold without holomorphic vector fields, the existence of K\"ahler-Einstein metric is equivalent to that the K-energy functional is coercive in terms of geodesic distance.
%\end{prop}

\subsection{ $J$ functional }
In \cite{chen04}, the author introduce the so-called \emph{J-flow} as
\[
{{\partial \varphi}\over {\partial t}} =  \underline{\chi} - \tr_\varphi\; \chi.
\]
This is used to study the lower bound of the K-energy functional on ample K\"ahler manifold.  A striking feature of this $J$ flow is its convexity along any $C^{1,1}$ geodesic segment. 

\begin{prop}[\cite{chen04}] $J$ is a strictly convex functional along any $C^{1,1}$
geodesic. In particular, $J$ has at most one critical point in
${\cal H}.\;$
\end{prop}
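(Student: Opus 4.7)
My plan is to compute $\tfrac{d^2}{dt^2}J_\chi(\varphi_t)$ directly along a smooth path in $\cH$, re-express the result as a non-negative integral plus a term that vanishes on geodesics, and then extend to $C^{1,1}$ geodesics by approximation.

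First I would take a smooth path $\varphi_t\in\cH$ and differentiate (\ref{eq:jfunctional}) once more, using the standard identities $\tfrac{d}{dt}\omega_{\varphi_t}^{[k]} = i\p\b\p\dot\varphi_t\wedge\omega_{\varphi_t}^{[k-1]}$ and $(\tr_{\varphi_t}\chi)\omega_{\varphi_t}^{[n]} = \chi\wedge\omega_{\varphi_t}^{[n-1]}$, together with integration by parts (all the forms involved are closed). Invoking the pointwise identity $|\nabla u|^2_\omega\,\chi\wedge\omega^{[n-1]} - i\p u\wedge\b\p u\wedge\chi\wedge\omega^{[n-2]} = \chi(\nabla u,\overline{\nabla u})\,\omega^{[n]}$ (verified by diagonalising $\chi$ against $\omega$), the computation should collapse to
\[
\tfrac{d^2}{dt^2}J_\chi(\varphi_t) = \int_M(\ddot\varphi_t - |\p\dot\varphi_t|^2_{\omega_{\varphi_t}})(\tr_{\varphi_t}\chi - \underline\chi)\,\omega_{\varphi_t}^{[n]} + \int_M \chi(\nabla\dot\varphi_t,\overline{\nabla\dot\varphi_t})\,\omega_{\varphi_t}^{[n]}.
\]
Along a smooth geodesic the first integrand vanishes identically, and the second is pointwise non-negative whenever $\chi\ge 0$, and strictly positive wherever $\chi>0$ and $\nabla\dot\varphi_t\ne 0$. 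This yields strict convexity of $J_\chi$ along any non-trivial smooth geodesic.

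The main obstacle will be that geodesics in $\cH$ are in general only $C^{1,1}$, so the second $t$-derivative above is not classically defined. I would handle this via the approximation scheme of \cite{chen00}: approximate $\varphi_t$ by smooth $\ep$-geodesics $\varphi_t^\ep$ coming from the perturbed homogeneous complex Monge-Amp\`ere equation on $M\times[0,1]\times S^1$. For these, the geodesic defect $\ddot\varphi_t^\ep - |\p\dot\varphi_t^\ep|^2_{\omega_{\varphi_t^\ep}}$ is of order $O(\ep)$ with uniform bounds, and $\varphi_t^\ep\to\varphi_t$ in $C^{1,\b 1}$. Applying the identity to each $\varphi_t^\ep$: the first integral drops out in the limit, while the positive second integral survives because it depends only on first spatial derivatives, which persist under $C^{1,\b 1}$-convergence. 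This yields convexity of $J_\chi(\varphi_t)$ in the distributional sense in $t$, and strict convexity whenever $\nabla\dot\varphi_t\ne 0$ on a set of positive measure, i.e.\ whenever the geodesic is non-trivial.

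Uniqueness of critical points is then a formal consequence: if $\varphi^0\ne\varphi^1\in\cH$ were two critical points of $J_\chi$, the $C^{1,1}$ geodesic joining them (existence from \cite{chen00}) would produce a strictly convex real-valued function of $t\in[0,1]$ with vanishing derivative at both endpoints, which is impossible. The hard part of the whole argument really is the low regularity of geodesics; once the approximation is in place, the entire statement is driven by the single pointwise positivity $\chi(\nabla u,\overline{\nabla u})\ge 0$.
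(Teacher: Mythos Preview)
Your approach is essentially identical to the paper's: both compute the second variation of $J_\chi$ along $\epsilon$-approximate geodesics, isolate the geodesic-defect term multiplied by $\tr_\varphi\chi$ (the paper drops the harmless $-\underline\chi$) plus the positive quadratic $\int \chi(\nabla\dot\varphi,\overline{\nabla\dot\varphi})\,\omega_\varphi^{[n]}$, and pass to the $C^{1,1}$ limit. Your write-up is in fact more explicit than the paper's about the limiting step and the uniqueness consequence, but the computation and strategy are the same.
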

For the convenience of readers, we re-produce the proof here. 
\begin{proof} Suppose $\varphi_t$  is a $C^{1.1}$ geodesic. In other
words, $\varphi_t$ is a weak limit of the following continuous
equation as $\epsilon \rightarrow 0 $ (with uniform bounds on the
second mixed derivatives of K\"ahler potentials):
\[  \left({{\partial^2 \varphi}\over{\partial t^2}}  - {1\over 2} \;\mid \nabla {{\partial \varphi}\over{\partial t}} \mid_{\varphi}^2\right)
{{\omega_{\varphi_t}^n}\over {n!}}  = \epsilon \cdot
{{\omega_0^n}\over {n!}} .
\]
Denote $g_t$ as the corresponding K\"ahler metric corresponds to
the K\"ahler form $\omega_{\varphi_t}.\;$  Again, we drop the
dependence of $t$ from $g_t$ for convenience from now on. Recall
the  definition of $J$, we have
\[
   \frac{\mathrm{d}J_\chi}{d\, t} = \displaystyle\;\int_V\; {{\partial \varphi}\over{\partial t}}
   \; (g^{\alpha \overline{\beta}} \; \chi_{\alpha \overline{\beta}}) {{\omega_{\varphi}^n}\over {n!}} .\;
\]
Then (denote $\sigma = g^{\alpha \overline{\beta}} \; \chi_{\alpha \overline{\beta}}$ in the following calculation):
\[\begin{array}{lcl}
  \frac{d^2 J}{d\, t^2}
 &  = & \displaystyle\;\int_V\;\left({{\partial^2 \varphi}\over{\partial t^2}} \sigma -  {{\partial \varphi}\over{\partial t}} g^{\alpha \overline{\beta}} ({{\partial \varphi}\over{\partial t}})_{,\overline{\beta} r} g^{r \overline{\delta}} \chi_{\alpha\overline{\delta}} +{{\partial \varphi}\over{\partial t}} \;\sigma \Delta_g\; {{\partial \varphi}\over{\partial t}}
\right) \; {{\omega_{\varphi}^n}\over {n!}} \\
& = &  \displaystyle\;\int_V\;\left( {{\partial^2
\varphi}\over{\partial t^2}} \sigma - {{\partial
\varphi}\over{\partial t}} g^{\alpha \overline{\beta}}
({{\partial \varphi}\over{\partial t}})_{,\overline{\beta} r}
g^{r \overline{\delta}} \chi_{\alpha\overline{\delta}}\right.
\\ & & \qquad \qquad \qquad-\left. ({{\partial \varphi}\over{\partial t}})_{,r} \sigma g^{r \overline{\delta}} ({{\partial \varphi}\over{\partial t}})_{,\overline{\delta}}
- {{\partial \varphi}\over{\partial t}} g^{\alpha \overline{\beta}} \chi_{\alpha \overline{\beta},\overline{\delta}}
g^{r \overline{\delta}} ({{\partial \varphi}\over{\partial t}})_{,r}  \right) \; {{\omega_{\varphi}^n}\over {n!}} \\
& = &  \displaystyle\;\int_V\;\left( ({{\partial^2 \varphi}\over{\partial t^2}}
- {1\over 2} |\nabla {{\partial \varphi}\over{\partial t}}|^2_{g} ) \sigma
- {{\partial \varphi}\over{\partial t}} g^{\alpha \overline{\beta}} ({{\partial \varphi}\over{\partial t}})_{,\overline{\beta} r}
 g^{r \overline{\delta}} \chi_{\alpha\overline{\delta}} \right. \\
 & & \qquad \qquad \qquad \left.- {{\partial \varphi}\over{\partial t}} \left(g^{\alpha \overline{\beta}} \chi_{\alpha \overline{\delta}} g^{r \overline{\delta}}
\right)_{,\overline{\beta}} ({{\partial \varphi}\over{\partial t}})_{,r} \right) \; {{\omega_{\varphi}^n}\over {n!}} \\
& = &  \displaystyle\;\int_V\;\left( ({{\partial^2 \varphi}\over{\partial t^2}}
- {1\over 2} |\nabla {{\partial \varphi}\over{\partial t}}|^2_{g} )\; ( g^{\alpha \overline{\beta}} \; \chi_{\alpha \overline{\beta}} )
+ ({{\partial \varphi}\over{\partial t}})_{,\overline{\beta}} \left(g^{\alpha \overline{\beta}} \chi_{\alpha \overline{\delta}} g^{r \overline{\delta}}
\right) ({{\partial \varphi}\over{\partial t}})_{,r} \right) \; {{\omega_{\varphi}^n}\over {n!}} \\
& \geq & \displaystyle\;\int_V\;({{\partial \varphi}\over{\partial
t}})_{,\overline{\beta}} \left(g^{\alpha \overline{\beta}}
\chi_{\alpha \overline{\delta}} g^{r \overline{\delta}} \right)
({{\partial \varphi}\over{\partial t}})_{,r} \;
{{\omega_{\varphi}^n}\over {n!}} \geq 0.
\end{array}
\]
The last equality holds along any $C^{1,1}$ geodesic.
\end{proof}

Since its inception, the flow is now well studied. According to  Song-Weinkove \cite{SongBen040} , the necessary and sufficient condition for the flow to converge is that
there exists a form $\omega'\in [\omega]$ such that
\[
(n \underline{\chi}  \omega' - (n-1) \chi) \wedge \omega'^{n-2} > 0.
\]
For more updated work on this subject, we refer readers to Weinkove \cite{Weinkove04},  Song-Weinkove \cite{SongBen040} and Fang-Lai-Song-Weinkove \cite{FLSW14}
for further readings.\\

Following this proposition and  the recent works \cite{Ber14-01,ChenLiPaun14}, we can easily prove that the twisted K-energy functional is convex (Propoisition \ref{prop1-6}), and bounded from below if there is a twisted
cscK metric (Corollary \ref{cor1-7}) and finally can prove the uniqueness of twisted cscK metric for $t < 1.\;$

\section{Deformation}\label{sect4}
In this section, we will prove the openness for deformation of twisted cscK metrics (Theorem \ref{openness}).   % We first study the openness theorem when $t \in (0,1)$ in this section.
%The pertubation theory at $t=0$ is more complicated since for $t \in (0,1]$, it is a 4th order PDE while at $t=0$, it drops down to a second order equation.  So it is more complicated to choose right space to work both for $t=0$ and $t >0$ but small. So we will delay
%discussions of that part to the end of this section. \\
Now we assume $t \in (0,1).\;$ Set

\begin{align*}
\mathcal{H}^{4,\alpha}(M) = \{\varphi \in C^{4,\alpha}(M, \mathbb{R})\ | \omega_{\varphi} = \omega + \sqrt{-1} \partial \bar{\partial} \varphi > 0\}\\
\end{align*}
For any closed positive $(1,1)$-form $\chi$ on $M,\;$ we define a   map
\begin{align*}
F: \mathcal{H}^{4,\alpha}(M) \times [0,1] & \longrightarrow   C^{\alpha}(M, \mathbb{R}) \times  [0,1] \\
    ( \varphi , t) & \longmapsto (t(R_{\varphi}- \underline{R})- (1-t)(\tr_{\varphi} \chi - \underline{\chi}), t)\\
\end{align*}
where $R_{\varphi}$ is the scalar curvature of $\omega_{\varphi}.\;$ The openness theorem $0< t< 1$ is equivalent to the following \footnote{We will deal with the case $t=0$ elsewhere.}.

\begin{theo}\label{thm4-1}
If $F(\varphi_{0}, t_{0}) = (0, t_{0})$ for some $t_0 \in (0, 1)$, then for $t\in [0,1)$ which is  sufficiently close to $t_{0}$, we can find $\varphi=\varphi(t)$ such that $F(\varphi, t) = (0, t)$. 
\end{theo}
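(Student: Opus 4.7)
The plan is to apply the Banach-space implicit function theorem at $(\varphi_0,t_0)$. Since additive constants on $\varphi$ are invisible to $F$, and since the image of $F$ automatically lies in the hyperplane of $C^{\alpha}$ functions with zero integral against $\omega_{\varphi_0}^{n}$, the task reduces to verifying that the partial linearization
$$L := D_\varphi F\big|_{(\varphi_0,t_0)}\colon C^{4,\alpha}(M,\mathbb R)/\mathbb R \longrightarrow C^{\alpha}_0(M,\mathbb R)$$
is a linear isomorphism. Using the standard variation formulas for the scalar curvature and for $\tr_\varphi\chi$, one computes
$$L\psi = -t_0\,\mathcal D^*\mathcal D\,\psi + t_0\,\langle \nabla R_{\varphi_0},\nabla\psi\rangle_{\varphi_0} + (1-t_0)\,h^{\delta\bar\gamma}_{\chi}\,\psi_{\delta\bar\gamma},$$
where $\mathcal D\psi = \bar\partial\nabla^{1,0}\psi$ is the Lichnerowicz operator and $h^{\delta\bar\gamma}_{\chi} := g^{\alpha\bar\gamma}_{\varphi_0}g^{\delta\bar\beta}_{\varphi_0}\chi_{\alpha\bar\beta}$ is positive-Hermitian because $\chi>0$. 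For $t_0\in(0,1)$ the principal symbol is $-t_0|\xi|^{4}_{\varphi_0}\neq 0$, so $L$ is a fourth-order, formally self-adjoint elliptic operator on a compact manifold, hence Fredholm of index zero on the H\"older quotients above.

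The heart of the matter is to show $\ker L = \{0\}$ on $C^{4,\alpha}/\mathbb R$. This is the infinitesimal avatar of the strict convexity in Proposition \ref{prop1-6}: up to sign, $L$ is the Hessian of the twisted K-energy $E_{\chi,t_0}$ at its critical point $\varphi_0$. Pairing $L\psi$ with $\psi$ against $\omega_{\varphi_0}^{n}$ and integrating by parts, using the twisted cscK equation $t_0(R_{\varphi_0}-\underline R) = (1-t_0)(\tr_{\varphi_0}\chi-\underline\chi)$ to absorb the lower-order first-derivative term, should produce the second-variation identity
$$-\!\int_M \psi\, L\psi\,\omega_{\varphi_0}^{[n]} \;=\; t_0\!\int_M |\mathcal D\psi|^{2}\,\omega_{\varphi_0}^{[n]} \;+\; (1-t_0)\!\int_M |\partial\psi|^{2}_{\chi,\varphi_0}\,\omega_{\varphi_0}^{[n]},$$
with both terms on the right non-negative. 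The Lichnerowicz term vanishes iff $\nabla^{1,0}\psi$ is holomorphic; the second, because $\chi>0$, is a genuine positive-definite Dirichlet form in $\partial\psi$, so its vanishing forces $\partial\psi\equiv 0$. For $t_0\in(0,1)$ both terms are active, so $\psi$ must be constant, giving injectivity of $L$ modulo $\mathbb R$. Formal self-adjointness then upgrades this to surjectivity, and $L$ is an isomorphism.

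With $L$ invertible, the implicit function theorem applied to the smooth map $F\colon \mathcal H^{4,\alpha}\times[0,1]\to C^{\alpha}(M,\mathbb R)\times[0,1]$ yields, for every $t$ in a small neighborhood of $t_0$, a unique $\varphi(t)\in \mathcal H^{4,\alpha}$ with $\varphi(t_0)=\varphi_0$ and $F(\varphi(t),t)=(0,t)$. Bootstrapping from the coupled pair \eqref{eqn2-9}--\eqref{eqn2-10} then upgrades $\varphi(t)$ to $C^\infty$. The main obstacle I expect is executing the second-variation identity cleanly: the first-derivative term $t_0\langle\nabla R_{\varphi_0},\nabla\psi\rangle$ is not a priori sign-definite, and rewriting it as a divergence requires a careful integration by parts that genuinely uses the twisted cscK equation satisfied by $\varphi_0$. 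The limiting cases fail precisely here, which is consistent with the excerpt: at $t_0=1$ the $\chi$-term disappears and one only recovers openness modulo holomorphy potentials (the LeBrun--Simanca regime), while at $t_0=0$ the operator drops to second order and a different scheme is needed, matching the discussion surrounding Question \ref{q1-6}.
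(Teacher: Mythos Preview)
Your approach is correct in its essentials and shares the same core with the paper's: both hinge on the second-variation identity
\[
-\int_M \psi\,\mathcal L_{(\varphi_0,t_0)}\psi\,\omega_{\varphi_0}^n \;=\; t_0\int_M |\psi_{,\bar\alpha\bar\beta}|_{\varphi_0}^2\,\omega_{\varphi_0}^n \;+\; (1-t_0)\int_M \psi_{,\bar\alpha}\psi_{,\beta}\,\chi_{\alpha\bar\beta}\,\omega_{\varphi_0}^n,
\]
valid precisely because the twisted cscK equation kills the lower-order term $u_{,\bar\delta}(t_0 R_{\varphi_0}-(1-t_0)\tr_{\varphi_0}\chi)_{,\delta}u$. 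This gives $\ker L = \mathbb R$, and together with self-adjointness and ellipticity yields the isomorphism needed for the implicit function theorem.

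One point needs care: you assert that the image of $F$ lies in $\{f:\int f\,\omega_{\varphi_0}^n=0\}$, but what actually holds is $\int F^1(\varphi,t)\,\omega_{\varphi}^n=0$, with the \emph{varying} volume form. So $F^1$ does not map into a fixed linear subspace, and the implicit function theorem cannot be applied quite as directly as you suggest. The paper addresses exactly this by composing with the projection $\tilde\pi$ onto $C^\alpha_0(\omega_{\varphi_0})$, applying the inverse function theorem to an auxiliary map $\Psi$, and then arguing separately that a solution of $\tilde\pi\circ F^1=0$ is automatically a solution of $F^1=0$ (since $F^1$ constant together with $\int F^1\omega_\varphi^n=0$ forces $F^1\equiv 0$). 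Your packaging via the implicit function theorem on the quotient is cleaner than the paper's more elaborate $\Psi$--$\tilde F$ construction, but it still needs this patch; once you insert the projection and the one-line constancy argument, your proof goes through. The paper also proves the kernel is one-dimensional at \emph{nearby} $(\varphi,t)$ (via a perturbative estimate), which it uses inside its $\tilde F$ analysis; your direct implicit-function-theorem route avoids needing this.
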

Consider the linearization of $F$:
\begin{align*}
\mathcal{D}F|_{(\varphi, t)}: C^{4,\alpha}(M) \times \mathbb{R}& \longrightarrow C^{\alpha}(M)  \times \mathbb{R}\\
(u, s) &\longmapsto    (\mathcal{L}_{(\varphi, t)}u + s(R_{\varphi} - \underline{R}+ \tr_{\varphi} \chi - \underline{\chi}), s)
\end{align*}
where $\mathcal{L}_{(\varphi, t)}$ is the linearized operator of the twisted  scalar curvature function, i.e.
\[ \mathcal{L}_{(\varphi, t)} u = -t\Delta_{\varphi}^2 u - \langle \sqrt{-1} \partial \bar{\partial} u , t\text{Ric}_{\varphi} - (1-t) \chi \rangle_{\varphi}\] 
Set $T = \mathcal{D}F|_{(\varphi_0, t_0)},\;$ then
\begin{lem}\label{lem4-2}
$\mathcal{R}(T)$, which is the range of $T$, is closed. 
\end{lem}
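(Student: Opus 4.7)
The plan is to reduce closedness of $\mathcal{R}(T)$ to closedness of the range of the scalar linearization
\[ \mathcal{L}=\mathcal{L}_{(\varphi_0,t_0)}\colon C^{4,\alpha}(M)\longrightarrow C^{\alpha}(M), \]
and then to obtain this latter fact from standard Fredholm theory for elliptic operators on a compact manifold.

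First I would verify that $\mathcal{L}$ is strictly elliptic of order four. The leading term is $-t_0\,\Delta_{\varphi_0}^2$, while the remainder $-\langle\sqrt{-1}\partial\bar\partial u,\,t_0\,\text{Ric}_{\varphi_0}-(1-t_0)\chi\rangle_{\varphi_0}$ is of order two; hence the principal symbol of $\mathcal{L}$ at a nonzero covector $\xi$ is, up to a positive multiple, $-t_0|\xi|_{\varphi_0}^4$, which is nonzero because $t_0\in(0,1)$. Consequently $\mathcal{L}$ is a linear elliptic operator of order four on the compact manifold $M$ with smooth coefficients. The Schauder theory for such operators (combined with the compact embedding $C^{4,\alpha}\hookrightarrow C^{\alpha}$) makes $\mathcal{L}\colon C^{4,\alpha}(M)\to C^{\alpha}(M)$ a Fredholm map; in particular its range is a closed subspace of $C^{\alpha}(M)$.

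With this ingredient in place, closedness of $\mathcal{R}(T)$ is essentially a diagram chase. Set $h:=R_{\varphi_0}-\underline{R}+\tr_{\varphi_0}\chi-\underline{\chi}\in C^{\alpha}(M)$, so that $T(u,s)=(\mathcal{L}u+sh,\,s)$. Suppose $(f_n,s_n)=T(u_n,s_n)$ converges in $C^{\alpha}(M)\times\mathbb{R}$ to some $(f,s)$. Then $s_n\to s$ in $\mathbb{R}$, and therefore
\[ \mathcal{L}u_n = f_n - s_n h \longrightarrow f - s h \quad\text{in } C^{\alpha}(M). \]
Since $\mathcal{R}(\mathcal{L})$ is closed, there is some $u\in C^{4,\alpha}(M)$ with $\mathcal{L}u=f-sh$, and then $T(u,s)=(f,s)$, exhibiting $(f,s)\in\mathcal{R}(T)$.

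The only substantive point in the argument is the ellipticity of $\mathcal{L}$, and this depends crucially on the strict positivity of $t_0$. This is precisely why the openness theorem is stated only for $t_0\in(0,1)$; at the endpoint $t=0$ the leading fourth-order term disappears and the operator must be analyzed by different methods, as the paper flags in Section~1.2 following Question~\ref{q1-6}. No estimate tailored to the twisted cscK equation itself is required at this stage — the lemma follows from $\mathcal{L}$ being a strictly elliptic scalar operator of order four on a compact manifold, together with the one-dimensional perturbation by the bounded direction $s\mapsto sh$.
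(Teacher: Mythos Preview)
Your argument is correct, but it proceeds quite differently from the paper. You invoke the abstract Fredholm property of a fourth-order elliptic operator on a compact manifold to conclude that $\mathcal{R}(\mathcal{L})$ is closed, and then reduce closedness of $\mathcal{R}(T)$ to this by the one-line observation $\mathcal{L}u_n=f_n-s_nh\to f-sh$. The paper instead works by hand: it normalizes $\int u_i\,\omega_{\varphi_0}^n=0$, exploits the assumption $F(\varphi_0,t_0)=(0,t_0)$ together with $\chi>0$ to obtain the coercivity inequality
\[
-\int u\,\mathcal{L}_{(\varphi_0,t_0)}u\,\omega_{\varphi_0}^n \;\geq\; \epsilon_0\int |\nabla_{\varphi_0}u|^2_{\varphi_0}\,\omega_{\varphi_0}^n,
\]
and from this derives successive $L^2$, $W^{2,2}$, $W^{4,2}$, and finally $C^{4,\alpha}$ bounds on the preimages $u_i$, extracting a convergent subsequence. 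Your route is shorter and does not use that $\varphi_0$ is actually a twisted cscK metric; the paper's route, while longer, isolates precisely the coercivity estimate that is also the engine behind the companion lemma showing $\dim\ker T=1$. In effect the paper proves closedness and triviality of the kernel with the same integration-by-parts computation, whereas you separate the two issues and dispatch closedness by general theory.
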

Before we prove this lemma, it is important to note that
\begin{lem}  The kernel of $T$ is one dimensional for any $(\varphi,t)$ which is sufficiently close to $(\varphi_0, t_0)$ in $C^{4,\alpha}(M)\times \mathbb{R}.\;$
\end{lem}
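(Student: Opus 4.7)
Rewriting $T(u,s) = (\mathcal{L}_{(\varphi,t)}u + s\, h_{(\varphi,t)},\, s)$ with $h_{(\varphi,t)} = R_\varphi - \underline{R} + \mathrm{tr}_\varphi\chi - \underline{\chi}$, any $(u,s)\in \ker T$ must have $s=0$ and then $\mathcal{L}_{(\varphi,t)} u = 0$. Hence $\ker T \cong \ker \mathcal{L}_{(\varphi,t)}$, so it suffices to show the latter is one-dimensional near $(\varphi_0,t_0)$. Constants always lie in $\ker \mathcal{L}_{(\varphi,t)}$ since every term involves at least second derivatives of $u$, which gives the trivial lower bound $\dim \ker \mathcal{L}_{(\varphi,t)} \geq 1$; only the matching upper bound requires work.

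At the reference point $(\varphi_0,t_0)$, I would establish $\ker \mathcal{L}_{(\varphi_0,t_0)} = \mathbb{R}$ by a twisted Lichnerowicz-type calculation. Apply the classical identity $-\Delta^2 u - \langle i\partial\bar\partial u, \mathrm{Ric}\rangle = -2\mathcal{D}^*\mathcal{D}u + \langle \nabla R, \nabla u\rangle$, where $\mathcal{D}u = \bar\partial\nabla^{1,0}u$, to the Ricci piece of $\mathcal{L}_{(\varphi_0,t_0)}$, and integrate the $\chi$-piece by parts, using closedness of $\chi$ to rewrite the contracted derivative $\chi^{\alpha\bar\beta}{}_{;\bar\beta}$ as $(\mathrm{tr}_{\varphi_0}\chi)^{,\alpha}$. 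This should produce
\[
\int_M u\, \mathcal{L}_{(\varphi_0,t_0)} u\, \omega_{\varphi_0}^n = -2 t_0 \int_M |\mathcal{D}u|^2\, \omega_{\varphi_0}^n - (1-t_0)\int_M \chi(\nabla^{1,0}u, \overline{\nabla^{1,0}u})\, \omega_{\varphi_0}^n,
\]
modulo a first-order remainder proportional to $\int u \langle \nabla(t_0 R_{\varphi_0} - (1-t_0)\mathrm{tr}_{\varphi_0}\chi), \nabla u\rangle$, which vanishes identically because $\varphi_0$ satisfies (\ref{eq:twisted cscK}), making $t_0 R_{\varphi_0} - (1-t_0)\mathrm{tr}_{\varphi_0}\chi = -C_{t_0}$ a constant. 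Since $t_0 \in (0,1)$ and $\chi > 0$, the right-hand side is non-positive and vanishes only when $\mathcal{D}u = 0$ and $\nabla u \equiv 0$; the second condition already forces $u$ to be constant. Hence $\ker \mathcal{L}_{(\varphi_0,t_0)} = \mathbb{R}$.

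To extend this to $(\varphi,t)$ near $(\varphi_0,t_0)$, I would invoke upper semi-continuity of the kernel dimension for continuous families of Fredholm operators. The operator $\mathcal{L}_{(\varphi,t)}\colon C^{4,\alpha}(M) \to C^\alpha(M)$ is a fourth-order elliptic operator with principal symbol $-t|\xi|^4$ (strictly elliptic for $t > 0$), hence Fredholm; its index is zero since it is a compact lower-order perturbation of $-t\Delta_\varphi^2$, whose kernel and cokernel both equal $\mathbb{R}$. The coefficients of $\mathcal{L}_{(\varphi,t)}$ depend continuously on $(\varphi,t)$ in $C^{4,\alpha}(M) \times \mathbb{R}$, so upper semi-continuity gives $\dim \ker \mathcal{L}_{(\varphi,t)} \leq 1$ for $(\varphi,t)$ sufficiently close to $(\varphi_0,t_0)$. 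Combined with the lower bound this yields $\dim \ker \mathcal{L}_{(\varphi,t)} = 1$, proving the lemma.

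The principal technical obstacle in this scheme is the twisted Lichnerowicz identity itself: one must carefully track the first-order commutator terms coming from both the Ricci and $\chi$ pieces, exploit $d\chi = 0$ to convert the $\chi$-divergence into $\nabla(\mathrm{tr}_{\varphi_0}\chi)$, and observe that the combined correction regroups into the gradient of the twisted scalar curvature, which vanishes by hypothesis. Once this identity is established, the remainder of the argument reduces to standard Fredholm perturbation theory.
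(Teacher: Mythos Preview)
Your argument is correct, and the core integral identity you derive is essentially the same one the paper uses. The difference lies in how each proof extends from $(\varphi_0,t_0)$ to nearby $(\varphi,t)$.

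The paper does not split the argument into ``exact identity at the base point'' plus ``Fredholm semi-continuity.'' Instead, it carries out the twisted Lichnerowicz computation directly at the perturbed point $(\varphi,t)$, obtaining
\[
\int u\,\mathcal{L}_{(\varphi,t)} u\,\omega_\varphi^n
= \int\Bigl(-t\,|u_{,\bar\alpha\bar\beta}|_\varphi^2 - (1-t)\,u_{,\bar\alpha}u_{,\beta}\chi_{\alpha\bar\beta}
+ u_{,\bar\delta}\bigl(tR_\varphi-(1-t)\tr_\varphi\chi\bigr)_{,\delta}u\Bigr)\omega_\varphi^n.
\]
The last term no longer vanishes, but is controlled by $\sup_M|\nabla(tR_\varphi-(1-t)\tr_\varphi\chi)|$, which is small when $(\varphi,t)$ is $C^{4,\alpha}\times\mathbb{R}$-close to the twisted cscK solution $(\varphi_0,t_0)$. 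Balancing this small error against the strictly negative $\chi$-term (via the Poincar\'e inequality and the lower bound $\chi\geq 2c\,\omega_{\varphi_0}$) forces $\nabla u=0$. This is a purely quantitative estimate and yields, in principle, an explicit size for the admissible neighborhood.

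Your route---proving $\ker\mathcal{L}_{(\varphi_0,t_0)}=\mathbb{R}$ exactly and then invoking upper semi-continuity of $\dim\ker$ for a continuous family of Fredholm operators of index zero---is cleaner and more conceptual, and avoids juggling the Poincar\'e constant against the error term. The paper's approach, on the other hand, is entirely self-contained and elementary: it never appeals to abstract Fredholm perturbation theory, and the same direct estimate is reused verbatim in the subsequent proof that $\mathcal{R}(T)$ is closed. Either method is perfectly adequate here.
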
\label{lem2}
\begin{proof} For any $(u, s) \in \text{Ker } T, $ without  loss of generality we might assume that
\[
\int_M \; u \omega_\varphi^n  = 0.
\]
By definition, we have $s =0$ and 
\[
{\cal L}_{(\varphi, t)} u  = 0.
\]
Thus,
\begin{align*}
\int u\mathcal{L}_{(\varphi, t)} u\omega_{\varphi}^n 
& = \int \big(-t\Delta_{\varphi}^2 u -t u_{ , \bar{\alpha} \beta} (\text{Ric}_{\varphi})_{\alpha \bar{\beta}}  
+ (1-t) u_{ , \bar{\alpha} \beta} \chi_{\alpha \bar{\beta}} \big)u\omega_{\varphi}^n\\
&= \int \big(-t|u_{,\bar{\alpha} \bar{\beta}}|_{\varphi}^2 - (1-t)u_{,\bar{\alpha}} u_{,\beta} \chi_{\alpha \bar{\beta}} 
+ u_{, \bar{\delta}} (t R_{\varphi}  - (1-t) \tr_{\varphi} \chi)_{,\delta} u \big) \omega_{\varphi}^n.
\end{align*}
It follows that
\begin{align*}
(1-t) \int u_{,\bar{\alpha}} u_{,\beta} \chi_{\alpha \bar{\beta}}  \omega_\varphi^n 
& = - \int t|u_{, \bar{\alpha} \bar{\beta}}|_{\varphi}^2\omega_{\varphi^n}  + \int\; u_{, \bar{\delta}} \big(tR_{\varphi}  
- (1-t)\tr_{\varphi} \chi\big)_{,\delta}u \omega_\varphi^n\\
& \leq - \int t|u_{, \bar{\alpha} \bar{\beta}}|_{\varphi}^2 +  \epsilon (\int u^2 +   \int |\nabla u|^2_\varphi ) \\ 
& \leq -(1-\epsilon) t \int |u_{, \bar{\alpha} \bar{\beta}}|_{\varphi}^2 + \epsilon (C+1) \int |\nabla u|^2_\varphi.
\end{align*}
Here $C$ is the Poincar$\acute{\text{e}}$ constant with respect to the metric $\omega_\varphi$ and $\epsilon$ is controlled by the quantity:
$$\sup_M  |\nabla \big( t R_{\varphi}  - (1-t) \tr_{\varphi} \chi\big)|\;$$
which could be made as small as we want for choosing $(\varphi,t)$ sufficiently close to $(\varphi_0, t_0)$. Thus, for any $t_0 <1$, we can choose
a small neighborhood of $t_0$ such that 
\[
   (1-t) c > \epsilon (1 +C) 
\]
here $ 2 c$ is the lower bound of $\chi$ in terms of $\omega_{\varphi_0}.\;$  Consequently, we have
\[
\int |\nabla u|^2_\varphi = \int |u_{, \bar{\alpha} \bar{\beta}}|_{\varphi}^2  = 0
\] 
which implies $u = 0$.
\end{proof}
Now we return to the proof of Lemma \ref{lem4-2}.
\begin{proof}[Proof of Lemma \ref{lem4-2}] Suppose $(f_i, s_i) \in \mathcal{R}(T)$ such that $f_i$ converges strongly in ${C^{\alpha}(M)}  $ to $f $ and $s_i \rightarrow s$ as $i \rightarrow \infty.\;$  We want to prove that $(f, s) \in \mathcal{R}(T).\;$ By definition,  we may assume that $(f_i, s_i) = T(u_i, s_i)$ where $u_i \in C^{4,\alpha}(M).\;$ 
From the definition of the operator $T$, and since $T(u+C, s) = T(u, s),\; $ we can set
 $$\int  u_i \omega_{\varphi_0} = 0$$
 In other words, 

$$\mathcal{L}_{(\varphi_0, t_0)}u_i + s_i(R_{\varphi_0} - \underline{R}+ \tr_{\varphi_0} \chi - \underline{\chi}) = f_i$$
For any function $u \in C^{4,\alpha}(M), $ we have
\begin{align*}
\int u\mathcal{L}_{(\varphi_0, t_0)} u\omega_{\varphi_0}^n 
& = \int \big(-t_0\Delta_{\varphi_0}^2 u - t_0 u_{ , \bar{\alpha} \beta} (\text{Ric}_{\varphi_0})_{\alpha \bar{\beta}}  
+ (1-t_0) u_{ , \bar{\alpha} \beta} \chi_{\alpha \bar{\beta}} \big)u\omega_{\varphi_0}^n\\
&= \int \big(-t_0 |u_{, \bar{\alpha} \bar{\beta}}|_{\varphi_0}^2 - (1-t_0)u_{,\bar{\alpha}} u_{,\beta} \chi_{\alpha \bar{\beta}} 
+ u_{, \bar{\delta}} (t_0 R_{\varphi_0}  - (1-t_0) \tr_{\varphi_0} \chi)_{,\delta}u \big) \omega_{\varphi_0}^n
\end{align*}
Since $F(\varphi_0, t_0) = (0, t_0) (t_0 < 1) $ and $\chi >0$, we have

\begin{align*}
-\int u\mathcal{L}_{(\varphi_0, t_0)} u\omega_{\varphi_0}^n 
\geq  \epsilon_0 \int |\nabla_{\varphi_0} u|^2_{\varphi_0} \omega_{\varphi_0}^n.
\end{align*}
Therefore, we have
\begin{align*}
\epsilon_0 \int |u_i|^2 \omega_{\varphi_0}^n & \leq C\cdot \epsilon_0 \int |\nabla_{\varphi_0} u_i|^2 \omega_{\varphi_0}^n \\
& \leq C \epsilon  \int \big(s_i(R_{\varphi_0} - \underline{R} + \tr_{\varphi_0}\chi - \underline{\chi} )- f_i\big) u_i \omega_{\varphi_0}^n\\
& \leq \frac{1}{2} \epsilon_0 \int |u_i|^2 \omega_{\varphi_0}^n + C.
\end{align*}
Thus, 
\begin{align*}
\int |u_i|^2 \omega_{\varphi_0}^n \leq C.
\end{align*}
It follows that
\[
 \int |\nabla_{\varphi_0} u_i|^2 \omega_{\varphi_0}^n \leq C.
\]
Let's first consider the case when $0< t_0 <1 $. Given the various bounds above, it's not hard to prove that

\[
 \int |\Delta_{\varphi_0} u_i|^2 \omega_{\varphi_0}^n \leq C
\]
or
\[
  \| u_i\|^2_{W^{2,2}_{\varphi_0}}  < C. 
\]
Now we can re-write the equation for $u_i$ as 
\[
t_0 \Delta_{\varphi_0} (\Delta_{\varphi_0} u_i) = - t_0 u_{ i, \bar{\alpha} \beta} (\text{Ric}_{\varphi_0})_{\alpha \bar{\beta}}  + (1-t_0) u_{i , \bar{\alpha} \beta} \chi_{\alpha \bar{\beta}}   + s_i(R_{\varphi_0} - \underline{R}+ \tr_{\varphi_0} \chi - \underline{\chi}) - f_i.
\]
Note that the right hand side is uniformly bounded in $L^2$ space. Thus, we have
\[
  \| \Delta_{\varphi_0} u_i\|^2_{W^{2,2}_{\varphi_0}}  < C. 
\]
or
\[
  \|  u_i\|^2_{W^{4,2}_{\varphi_0}}  < C. 
\]
Following the standard bootstrapping argument in \emph{elliptic theory}, we have

$$
\|u_i\|_{C^{4,\alpha}(M)} \leq C(\|u_i\|_{L^2(M)}+\|f_i -s_i(R_{\varphi_0} -\underline{R}+\tr_{\varphi_0}\chi-\underline{\chi})\|_{C^{\alpha}(M)}) \leq C
$$
It follows that for any $\alpha' < \alpha$ , we can choose a subsequence $u_i \xrightarrow{C^{4,\alpha'}(M)} u$, as $i \rightarrow \infty$ and $u\in C^{4,\alpha}(M)$. 
Let $(f',s)= T(u, s)$, then $f_i  \xrightarrow{C^{\alpha'}(M)} f'$, as $i \rightarrow \infty.$ Therefore, $(f,s)=(f',s)=T(u,s) \in \mathcal{R}(T).$ So we proved that $\mathcal{R}(T) $ is closed.

\end{proof}

Following the standard theory on linear operators among Hilbert spaces, we have
\begin{cor}  The following decomposition holds:
\[
C^{\alpha}(M) \times \mathbb{R} = \mathcal{R}(T) \oplus \mathbb{R} . 
\]
\end{cor}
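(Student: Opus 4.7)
The plan is to apply a Fredholm-style argument: Lemma \ref{lem4-2} has already supplied the closedness of $\mathcal{R}(T)$, and the preceding lemma has shown $\dim \ker T = 1$, so what remains is to exhibit an explicit one-dimensional complement of $\mathcal{R}(T)$ in $C^{\alpha}(M)\times\mathbb{R}$ and check that it is transversal to the range.

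First I would establish that $\mathcal{L}_{(\varphi_0, t_0)}$ is formally self-adjoint on $L^2(\omega_{\varphi_0}^n)$, with kernel exactly the constants and range exactly the mean-zero functions. Self-adjointness follows by polarizing the quadratic form already computed in the proof that $\dim \ker T = 1$: at the solution $(\varphi_0, t_0)$ the quantity $t_0 R_{\varphi_0} - (1-t_0)\tr_{\varphi_0}\chi = C_{t_0}$ is constant, which kills the single asymmetric drift term in the integration by parts, leaving a manifestly symmetric bilinear form in $u, v$. The kernel identification is the same energy estimate as before, now applied with $v = u$: positivity of $\chi$ and $t_0 < 1$ force $\nabla u = 0$. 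The range statement follows from the closedness of $\mathcal{R}(\mathcal{L}_{(\varphi_0, t_0)})$ (obtained by restricting Lemma \ref{lem4-2} to the slice $s = 0$) together with the standard duality for self-adjoint elliptic operators.

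Next I would set $g := R_{\varphi_0} - \underline{R} + \tr_{\varphi_0}\chi - \underline{\chi}$ and note that $\int_M g\,\omega_{\varphi_0}^n = 0$, which follows directly from the cohomological definitions of $\underline{R}$ and $\underline{\chi}$ as averages. For any $(f, s) \in C^{\alpha}(M)\times\mathbb{R}$, let $c$ be the $\omega_{\varphi_0}^n$-mean of $f$; then $f - c - s\,g$ has vanishing mean, so there exists $u \in C^{4,\alpha}(M)$ with $\mathcal{L}_{(\varphi_0, t_0)} u = f - c - s\,g$, yielding the decomposition $(f, s) = T(u, s) + (c, 0)$. Directness is immediate: if $(c, 0) = T(u, s')$, the second component forces $s' = 0$ and hence $\mathcal{L}_{(\varphi_0, t_0)} u = c$; integrating against the constant function $1$ and using $\mathcal{L}_{(\varphi_0, t_0)}(1) = 0$ together with self-adjointness forces $c\int_M \omega_{\varphi_0}^n = 0$ and thus $c = 0$. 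The only nontrivial input is the self-adjointness, but this is pinned down exactly by the twisted cscK equation at $(\varphi_0, t_0)$, so I expect no serious obstacle beyond a careful integration-by-parts calculation.
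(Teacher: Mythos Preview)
Your proposal is correct and follows essentially the same approach as the paper: both rely on the self-adjointness of $\mathcal{L}_{(\varphi_0,t_0)}$ at the twisted cscK point (where the drift term $(t_0 R_{\varphi_0}-(1-t_0)\tr_{\varphi_0}\chi)_{,\delta}$ vanishes) and the resulting Fredholm alternative to identify $\mathcal{R}(T)=C_0^{\alpha}(M)\times\mathbb{R}$, with the complementary $\mathbb{R}$ given by the constant functions $(c,0)$. The paper's version is terse---it simply asserts that ``$T$ is self-adjoint'' and that $\dim\ker T=\dim\mathrm{coker}\,T$---whereas you spell out the self-adjointness calculation, the explicit preimage construction, and the directness check; this is a useful expansion rather than a different argument.
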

\begin{proof}

By definition of the operator $T$, we know that for any $(f,s) \in \mathcal{R}(T), \;$ we have $\int_M f \omega_{\varphi_0}^n = 0.\;$
Set $C^\alpha_0(M) = \{ f\in C^\alpha(M): \int_M f \omega_{\varphi_0}^n = 0 \}.\;$ Then,  $\mathcal{R}(T) \subset  C_0^{\alpha}(M) \times \mathbb{R}.\;$
 The equality holds since $T$ is a linear, self adjoint operator.  So the dimension of \emph{Kernel} is the same as the dimension of \emph{coKernal}. 
 Since $ \dim \text{Ker }T  = 1$, thus $\dim \text{coKer }T = 1.\;$ It follows that 
 $$C^{\alpha}(M) \times \mathbb{R} = \mathcal{R}(T) \oplus \mathbb{R} . $$
\end{proof}

Denote $\tilde{\mathcal{R}}(T)= \pi_{C^{\alpha}(M)}\mathcal{R}(T)= C_0^{\alpha}(M) \times \mathbb{R}.\;$
\begin{proof}[Proof of Theorem \ref{thm4-1}]
Without loss of generality, we can assume $\int_{M}\varphi_0 \omega_{\varphi_0}^n =0$. Consider 
 $F^1(\varphi, t) = \pi_{C^{\alpha}(M)}F(\varphi,t ) $ as the projection of $F$ to the $C^\alpha(M)$ component. Thus
 \[
 F^1(\varphi, t) = t\big(R_{\varphi} - \underline{R} ) - (1-t)(\tr_{\varphi} \chi - \underline{\chi}\big).\]
 Consider the map
\begin{align*}
\Psi: \mathcal{H}^{4,\alpha}(M) \times [0,1] &\longrightarrow (C^\alpha_0(M)  \oplus \mathbb{R})  \times [0,1]\\
(\varphi, t) &\longmapsto (\tilde{\pi}\circ F^1(\varphi , t) + \int \varphi \omega_{\varphi_0}^n,t ),
\end{align*}
where $\tilde{\pi}$ is the projection to $C^\alpha_0(M),\;$ i.e. $\tilde\pi (f) = f - \oint_M \; f \omega_{\varphi_0}^n$  for any function $f \in C^{\alpha}(M)$. Note that 
\[ \oint_M \;F^1(\varphi , t_0) \omega_{\varphi_0}^n  = 0
\]
Thus, corresponding to the variation $\delta\varphi=u$, the variation of $\tilde{\pi}\circ F^1$ at $t =t_0$ is:
\[
\delta \left(\tilde{\pi}\circ F^1\right)(\varphi , t) \mid_{t= t_0} =   \mathcal{L}_{(\varphi_0, t_0)} u + s (R_{\varphi_0} - \underline{R}+ \tr_{\varphi_0} \chi - \underline{\chi}). 
\]
It follows that
\begin{align*}
\mathcal{D}\Psi|_{(\varphi_0, t_0)}(u,s) &= \big(\mathcal{D}F^1|_{\varphi_0, t_0}(u,s) + \int u \omega_{\varphi_0}^n, s\big)\\
&= \big(\mathcal{L}_{(\varphi_0, t_0)} u + s (R_{\varphi_0} - \underline{R}+ \tr_{\varphi_0} \chi - \underline{\chi}) + \int u \omega_{\varphi_0}^n , s\big).
\end{align*}
By our discussions earlier, 
$\mathcal{D}\Psi|_{(\varphi_0, t_0)}:C^{4,\alpha}(M) \times \mathbb{R}\rightarrow (C^\alpha_0(M)  \oplus \mathbb{R})  \times \mathbb{R} $ is bijective. Thus, by inverse function theorem, we can find its inverse 
$$\Psi^{-1}: (C^\alpha_0(M) \oplus \mathbb{R})  \times [0,1] \rightarrow C^{4,\alpha}(M) \times [0,1]$$
near $(0, t_0).\;$ In other words, there exists a small open neighborhood $$V_{0,t_0} \subset C^\alpha(M) \times [0,1] = (C^\alpha_0(M)  \oplus \mathbb{R})  \times [0,1] $$ where $\Psi^{-1}$ is well defined 
 such that  $$\Psi^{-1} (V_{0,t_0}) =  U_{(\varphi_0, t_0)} \subset C^{4,\alpha}(M) \times [0,1].\;$$
 Denote
\begin{align*}
\tilde{F}=F\circ \Psi^{-1}:V_{(0,t_0)}  &\longrightarrow (C^\alpha_0(M) \oplus \mathbb{R})  \times [0,1]\\
(w + a, t) &\longmapsto (w + \int_{M}F^1\circ\Psi^{-1}(w + a , t)\;\omega_{\varphi_0}^n, t ).
\end{align*}
and $${f}(w,a,t) = \int_{M}F^1\circ\Psi^{-1}(w + a , t)\omega_{\varphi_0}^n.$$ 
Then the linearized operator is
\begin{align*}
\mathcal{D}\tilde{\mathcal{F}}|_{(w+a,t)}: (C^\alpha_0(M) \oplus \mathbb{R})  \times \mathbb{R} &\longrightarrow (C^\alpha_0(M)  \oplus \mathbb{R})  \times \mathbb{R}\\
(u+b, s) &\longmapsto (u + (\frac{\partial f}{\partial w}(w,a,t) u +\frac{\partial f}{\partial a}(w,a,t) b + \frac{\partial f}{\partial t}(w,a,t)s) ,s).
\end{align*}
Next we consider its kernel $\text{Ker}(\mathcal{D}\tilde{\mathcal{F}}|_{(w+a,t)}).\;$ Since $\tilde F =  F \circ \Psi^{-1} $ and $\Psi$ is bijection, we need to consider the 
$\dim  \text{Ker}(\mathcal{D}\mathcal{F}|_{(\varphi, t)}).\;$ Clearly, we have
$$\text{Ker}(\mathcal{D}\mathcal{F}|_{(\varphi, t)})= \{(u,0) \in C^{4,\alpha}(M) \times \mathbb{R} | \mathcal{L}_{(\varphi, t)} u= 0\}. $$
Following the proof of the previous lemma, we can easily prove that for any $(u,0) \in \text{Ker}(\mathcal{D}\mathcal{F}|_{(\varphi, t)}), \;$
we have
\[
    u  =  \oint_M\; u \omega^n_{\varphi_0}.
\]
On the other hand, it is clear that 
\[
 \{(C,0) \in C^{4,\alpha}(M) \times \mathbb{R} \} 
 \subset \text{Ker}(\mathcal{D}\mathcal{F}|_{(\varphi, t)}).\]
 
 Therefore, $\dim \text{Ker}(\mathcal{D}\mathcal{F}|_{(\varphi, t)}) = 1.\;$ It follows that
  $\dim_{\mathbb{R}}\text{Ker}(\mathcal{D}\tilde{\mathcal{F}}|_{(w+a,t)})  = 1$ for $(w+a, t)$ sufficiently close to $(0,t_0)$.  We claim that  $\frac{\partial f}{\partial a}(w,a,t) =0.\;$
   Otherwise,  $\dim_{\mathbb{R}}Ker(\mathcal{D}\tilde{\mathcal{F}}|_{(w+a,t)}) =0.\;$ Note for any $( u + b, s) \in \text{Ker}(\mathcal{D}\tilde{\mathcal{F}}|_{(w+a,t)},$ we have
   \[
   (u + (\frac{\partial f}{\partial w}(w,a,t) u +\frac{\partial f}{\partial a}(w,a,t) b + \frac{\partial f}{\partial t}(w,a,t)s) ,s) = (0+0, 0).
   \]
   It follows that $ u = s =0$ and 
   \[
   \frac{\partial f}{\partial a}(w,a,t) b = 0.
   \]
   If $\frac{\partial f}{\partial a}(w,a,t) \neq 0, \;$  then $b =0$. It follows that  $\text{Ker}(\mathcal{D}\tilde{\mathcal{F}})|_{(w+a,t)} = 0.\;$
   This is a contradiction so our claim holds.  It follows that $f(w,a,t) = f(w,t)$. Therefore,

$$\tilde{F} (w+a, t) =  (w + f(w,t), t)$$
We want to find the preimage of $\tilde{F}$ for $(0,t)$. We claim that  $f(0,t)=0.\;$  First, we look at $(\varphi, t) =  \Psi^{-1}(0,a,t).\;$ It means that
\[
\Psi (\varphi, t) = (0,a, t).
\]
In other words, we have
\[
(\tilde \pi \circ F^1(\varphi,t) + \int_M \varphi \omega_{\varphi_0}^n, t) = (0, a, t).
\]
It follows that $\int_M \varphi \omega_{\varphi_0}^n =  a$ and $\tilde \pi \circ F^1 (\varphi,t)= 0.\;$ It follows that 
\begin{align*}
F^1(\varphi, t) - \int_{M} F^1(\varphi, t) \omega_{\varphi_0}^n = 0.
\end{align*}
which implies that $F^1(\varphi,t)(x) \equiv C.\;$ Note that $ \int_M F^1(\varphi,t) \omega_{\varphi}^n = 0 $ by definition. Consequently,
$F^1(\varphi,t)(x) \equiv 0,\;$ and then $$f(0,t) = \int_M F^1(\varphi, t) \omega_{\varphi_0}^n = 0.$$
So
\begin{equation*}
\tilde{F}(0,a,t) = (0 + f(0,t), t) = (0,t). 
\end{equation*}
We therefore have $F(\Psi^{-1}(0,a,t)) =  (0,t)$. This completes the proof.
\end{proof}

\section{Twisted K-stability}\label{sect5}
\subsection{Twisted K-stability}
Corresponding to the twisted K-energy functional, for any \emph{test configuration} $\lambda$ of $M$ (see \cite{SZ13} for a precise definition in the algebraic case when $\Omega$ represents the first Chern class of an holomorphic line bundle $L$, roughly speaking, it means that $M$ is embedded in $\mathbb{P}^{N_k}$ by the linear system $|-kL|$ and $\lambda:\mathbb{C}^*\to GL(N_k,\mathbb{C})$ is a one parameter subgroup ) we could define {twisted Futaki invariant}. Suppose $\lambda(s)=s^A$, define a function on $\mathbb{P}^{N_k}$,
$$h_A=\frac{zAz^*}{zz^*}$$
then we make the following definition:

\begin{defi}[twisted Futaki invariant]\label{def5-1}
$$\text{Fut}_t(\Omega,\chi;\lambda)=(1-t)\text{Fut}(\chi;\lambda)+t \text{Fut}(\Omega;\lambda)$$
where $\text{Fut}(\Omega;\lambda)$ is the algebraically defined \emph{Donaldson-Futaki invariant} of $\lambda$, and

%i.e.

%$$\text{Fut}(\Omega;\lambda)=\lim_{t\to 0}\int_{M_t}h_A (\underline{R}-R_{\frac{1}{k}\omega_{FS}|_{M_t}})(\frac{1}{k}\omega_{FS}|_{M_t})^{[n]}$$
\begin{align*}
\text{Fut}(\chi;\lambda)&=\text{Ch}(\chi;\lambda)-\underline{\chi} \text{Ch}(M;\lambda)\\
&=\lim_{t\to 0} \{\int_{M_s}h_A\lambda(s^{-1})^*\chi\wedge (\frac{1}{k}\omega_{FS}|_{M_s})^{[n-1]}-\underline{\chi}\int_{M_s} h_A (\frac{1}{k}\omega_{FS}|_{M_s})^{[n]}\}\\
&=\lim_{t\to 0} \{\int_{M}\chi\wedge \lambda(s)^*(h_A\omega_s^{[n-1]})-\underline{\chi}\int_{M} \lambda(s)^*(h_A \omega_s^{[n]})\}
\end{align*}
\end{defi}

\begin{rem}
In the special case when the central fiber $M_0=\lim_{t\to 0} \lambda(s)M$ is a normal variety, $\text{Fut}_t(\Omega,\chi;\lambda)$ reduces to the usual \emph{log-Futaki invariant} of the log pair $(M_0,(1-t)D_0)$ (first introduced by \cite{Dona12} for the study of conical K\"ahler-Einstein metrics) where $D_0=\lim_{s\to 0} \lambda(s)D$ for a generic divisor $D$ in the linear system $|L_\chi|$.

If $\chi$ and $\omega$ both belongs to $2\pi C_1(M)$, the \emph{twisted cscK metrics} equation  reduces to the Aubin's continuity path, for which the Futaki's invariant is already explicitly defined , see \cite[Formula (4.10)]{SZ13}.
\end{rem}
%%%%%%%%%%%%%
% Definition of twisted K-stability
%%%%%%%%%%%%%

\begin{defi}[twisted K-semistable/stable]
A triple $(\Omega,\chi,s)$ as above is called \emph{twisted K-semistable}  if 
$$\text{Fut}_t(\Omega,\chi;\lambda)\leq 0$$
for any \emph{test configuration} $\lambda$; and it is called \emph{twisted K-stable} if 
$$\text{Fut}_t(\Omega,\chi;\lambda)<0$$
for any $\lambda$ nontrivial.
\end{defi}
It follows from the definition that $\text{Fut}(\chi;\lambda)$ depends linearly on $\chi$ for any fixed $\lambda$. It is proved in \cite[Thm 6]{SZ13} that 
$$\text{Fut}(\chi;\lambda)\leq \text{Fut}(D;\lambda)$$
for any divisor $D$ in the linear system of $L_\chi$, and for any fixed \emph{test configuration} $\lambda$ the equality holds for generic element $D$ in this linear system. One consequence is that $\text{Fut}(\chi;\lambda)$ is independent of the particular choice of $\chi$, therefore the notion of \emph{twisted K-stability} for a triple $(\Omega,\chi,s)$ is independent of the choice of $\chi$ in a fixed cohomology class also .
\begin{prop}[Linear Interpolation]
If $(\Omega,\chi,t_0)$ and $(\Omega,\chi,t_1)$ are both twisted K-semistable, and one of them is twisted K-stable, then $(\Omega,\chi,t)$ is twisted K-stable for any $t\in (t_0, t_1)$.
\end{prop}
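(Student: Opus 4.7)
The plan is to exploit the fact that, for each fixed test configuration $\lambda$, the twisted Futaki invariant $\text{Fut}_t(\Omega,\chi;\lambda)$ depends affinely on $t$. From Definition \ref{def5-1},
\[
\text{Fut}_t(\Omega,\chi;\lambda) = (1-t)\,\text{Fut}(\chi;\lambda) + t\,\text{Fut}(\Omega;\lambda),
\]
which is a linear polynomial in $t$ with coefficients depending only on $\lambda$, $\chi$ and $\Omega$.

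First I would write any $t\in(t_0,t_1)$ as a convex combination $t = (1-\mu)t_0 + \mu t_1$ with $\mu\in(0,1)$, and verify by direct expansion that
\[
\text{Fut}_t(\Omega,\chi;\lambda) = (1-\mu)\,\text{Fut}_{t_0}(\Omega,\chi;\lambda) + \mu\,\text{Fut}_{t_1}(\Omega,\chi;\lambda).
\]
This identity follows simply by grouping the coefficients of $\text{Fut}(\chi;\lambda)$ and $\text{Fut}(\Omega;\lambda)$ on each side and noting that $(1-\mu)(1-t_0)+\mu(1-t_1)=1-t$ and $(1-\mu)t_0+\mu t_1 = t$.

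Next, fix an arbitrary nontrivial test configuration $\lambda$. Without loss of generality assume that $(\Omega,\chi,t_0)$ is the triple that is twisted K-stable, so $\text{Fut}_{t_0}(\Omega,\chi;\lambda)<0$, while by twisted K-semistability of $(\Omega,\chi,t_1)$ one has $\text{Fut}_{t_1}(\Omega,\chi;\lambda)\leq 0$. Since $1-\mu>0$ and $\mu>0$, the convex combination above is the sum of a strictly negative number and a nonpositive number, hence
\[
\text{Fut}_t(\Omega,\chi;\lambda) < 0.
\]
Because $\lambda$ was arbitrary, this shows $(\Omega,\chi,t)$ is twisted K-stable for every $t\in(t_0,t_1)$.

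There is essentially no analytic obstacle in this argument: the whole statement is a formal consequence of the affine dependence of $\text{Fut}_t$ on $t$. The only subtlety worth flagging is to ensure that we are comparing Futaki invariants for the \emph{same} test configuration $\lambda$ across the three parameter values $t_0,t_1,t$, so that the linearity identity is applied correctly; once this is observed, the conclusion is immediate.
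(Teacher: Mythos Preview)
Your argument is correct and follows the same approach as the paper: both exploit the affine dependence of $\text{Fut}_t(\Omega,\chi;\lambda)$ on $t$ for a fixed test configuration and conclude by interpolating between the signs at $t_0$ and $t_1$. Your version is simply more explicit about writing $t$ as a convex combination and checking the identity, but there is no substantive difference.
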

\begin{proof}
Since $\text{Fut}_t(\Omega,\chi;\lambda)$ depends on $t$ linearly for any fixed \emph{test configuration} $\lambda$, the \emph{twisted K-stability} for the two parameters $t_0$ and $t_1$ implies a preferred sign of $\text{Fut}_t$ for any $t$ in between.
\end{proof}

\begin{conj}
If $(\Omega,\chi,t)$ with $t<1$ is twisted K-stable iff Equation (\ref{eq:continouspath1}) admits a solution for the parameter $t$.
\end{conj}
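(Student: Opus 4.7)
The plan is to address the two directions separately, as is standard for Yau--Tian--Donaldson type statements. The ``only if'' direction, that a twisted cscK metric forces twisted K-stability, should be accessible using the convexity and moment map framework already assembled in the excerpt; the ``if'' direction is the deep existence result, which I would attack via the continuity path (\ref{eq:continouspath1}).

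For the direction \emph{existence $\Rightarrow$ twisted K-stability}, I would follow the Tian--Stoppa--Berman philosophy adapted to the twisted setting. Given a solution $\omega_\varphi$ of (\ref{eq:continouspath1}), Proposition \ref{prop1-6} gives strict convexity of $E_{\chi,t}$ along $C^{1,1}$ geodesic segments, and Corollary \ref{cor1-7} gives a lower bound. Following the strategy in the proof of Theorem \ref{thm3-4}, I would upgrade this to properness of $E_{\chi,t}$ in geodesic distance. For a given test configuration $\lambda$, the plan is to construct an associated geodesic ray in $\mathcal{H}$, using the Phong--Sturm construction (or its refinement by Berman--Boucksom--Jonsson), and to identify the asymptotic slope of $E_{\chi,t}$ along this ray with $-\mathrm{Fut}_t(\Omega,\chi;\lambda)$ up to a sign. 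Here the $J$-functional part contributes $-(1-t)\mathrm{Fut}(\chi;\lambda)$ (this is essentially \cite{SZ13}) and the K-energy part contributes $-t\,\mathrm{Fut}(\Omega;\lambda)$ (the Paul--Tian/Berman slope formula). Strict convexity combined with properness then forces $\mathrm{Fut}_t<0$ on nontrivial test configurations, giving twisted K-stability.

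For the direction \emph{twisted K-stability $\Rightarrow$ existence}, I would run the continuity path (\ref{eq:continouspath1}). Following the discussion after Question \ref{q1-6}, by choosing $\chi$ suitably (e.g.\ $\chi\in[\omega]$) the set $I$ is nonempty and contains $t=0$. The openness along $(0,1)$ is Theorem \ref{openness}, and one can reach the twisted regime by the remark on the $J$-flow or by the Nash--Moser/adiabatic-limit method. It then remains to prove closedness: let $t_\infty=\sup I\leq 1$ and suppose $t_\infty<1$. One needs uniform a priori estimates on solutions $\varphi_t$ as $t\nearrow t_\infty$ under the hypothesis that $(\Omega,\chi,t_\infty)$ is twisted K-stable. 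My plan is to argue by contradiction: if estimates fail, Proposition \ref{prop2-1}-style arguments show that $\omega_{\varphi_t}$ must lose quasi-isometric control; extract a Gromov--Hausdorff limit following the Chen--Donaldson--Sun program \cite{cds12-1,cds12-2,cds12-3}, prove a twisted partial $C^0$ estimate along the sequence (as suggested by the question after Proposition 2.2 and using positivity of $(1-t)\chi$ as an extra coercive term), and algebraize the limit to produce a test configuration with $\mathrm{Fut}_{t_\infty}\geq 0$, contradicting twisted K-stability. The same argument should also close the path at $t_\infty=1$ to give the cscK metric.

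The main obstacle is unquestionably the closedness step, specifically the twisted partial $C^0$ estimate and the algebraization of Gromov--Hausdorff limits in the absence of a definite Ricci lower bound. For $t<1$ the coefficient $(1-t)\chi>0$ in (\ref{eq:twisted cscK}) provides a strictly positive twisting term that should play the role that $\omega$ itself plays in the Fano case, giving the needed Sobolev/noncollapsing control; however making this rigorous, in particular obtaining Cheeger--Colding style structure for singular limits of twisted cscK metrics, is a substantial undertaking. The compactness theory for twisted cscK metrics (see the discussion around \cite{Tian-Viaclovsky,chen-weber}) is precisely the fundamental analytical problem flagged immediately after Theorem 1.12 in the excerpt, and any serious attack on this conjecture must confront it.
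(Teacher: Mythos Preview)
The statement you are attempting to prove is stated in the paper as a \emph{conjecture}, not a theorem; the paper contains no proof of it. There is therefore nothing to compare your proposal against. Your writeup is not a proof but a research outline, and you are candid about this: you correctly flag the closedness step---the twisted partial $C^0$ estimate and the structure theory for Gromov--Hausdorff limits of twisted cscK metrics without a Ricci lower bound---as the essential missing ingredient, and this is exactly the ``fundamental problem'' the paper itself identifies as open (see the discussion surrounding the Chen--Donaldson--Sun strategy and the questions in Section~\ref{sect2}).

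A few remarks on the content of your outline. For the direction \emph{existence $\Rightarrow$ twisted K-stability}, your plan via strict convexity, properness, and the slope-along-rays formula is reasonable; note that the paper's closing remark in Section~\ref{sect5} records that Dervan has already carried this out (even in the stronger uniform sense) using the lower bound of the twisted Calabi functional, so this direction is essentially known. For the direction \emph{twisted K-stability $\Rightarrow$ existence}, your proposed continuity argument is precisely the program the paper is advocating, but it is not a proof: openness is Theorem~\ref{openness}, the start at $t=0$ is handled as you say, but the compactness and algebraization steps are genuinely open and the paper does not claim them. In particular, your appeal to ``$(1-t)\chi>0$ playing the role of $\omega$ in the Fano case'' to obtain Cheeger--Colding structure is a hope, not an argument---the cscK equation is fourth order and the CDS machinery does not transfer without substantial new ideas.

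In short: your proposal is a faithful summary of the strategy the paper itself lays out, but neither you nor the paper has a proof, and you should not present this as one.
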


It should be noticed that the \emph{twisted cscK equation} (Equation (\ref{eq:continouspath1}) for $t=\frac{1}{2}$, the middle point of our continuous path) was already studied by \cite{Fine04, Stoppa09}. Stoppa in \cite[Theorem 1.3]{Stoppa09} gave a cohomological obstruction, \emph{K\"ahler slope stability}, to the existence extending the result of Ross-Thomas\cite{RT} for the un-twisted case. In another direction, \cite[Conjecture 1]{Szekelyhidi14} conjectured a numerical criterion for the existence of solution to Equation  (\ref{eq:jequation})(Equation (\ref{eq:continouspath1}) for $t=0$, the starting point of our continuous path), based on the study of \emph{deformation to the normal cone}, a particular type of test configuration, extensively used in the work \cite{RT}. And recently this conjecture was proved in \cite[Theorem 1.3]{CoSz} for toric manifolds, therefore our conjecture here holds on toric manifolds for the starting point.

\begin{rem}
While we were preparing this note, we noticed the recent work by R. Dervan \cite{Dervan} who introduced also the notion of (uniform) twisted K-stability and proved that the existence of twisted cscK metric implies the (uniform) twisted K-stability by using the lower bound of the twisted Calabi functional.\end{rem}

\section{Twisting cscK metric with higher degree form}
While a $(1,1)$-form $\chi > 0$ is convenient, from PDE point of view, there is no particular reasons to restrict oneselves to this setting only.
For any  integer $ k \in \{1,\cdots, n\}$, consider a closed $(k,k)$-form $\mu_k = \chi^k$, one can define a new functional
\[
{{d\, J_{\mu_k}}\over {d\, t}} =  \int_M\; \dot\varphi (\mu_k\wedge \omega_\varphi^{n-k}-c_k\omega_{\varphi}^n).
\] 
where  the constant
$$c_k = {{ [\mu_k] \cdot [\omega]^{n-k}}\over [\omega]^n}$$

{Remark that the \emph{closedness} of $\mu_k$ guarantee that $J_{\mu_k}$ is well-defined.} Note that this functional has been studied in the literature before (cf. \cite{FLSW14}). The Euler-Lagrange equation is
\[
{{ \mu_k\wedge \omega_\varphi^{n-k}}\over \omega_\varphi^n} = c_k
\]
{ When $k= 0$, this is the well-known $I$ functional. } When $k=1$, then this is just the usual $J$ functional we discussed here. When $k=n$, then $\mu$ is a volume form, for which the Euler-Lagrange equation reduces to 

\[
\omega_\varphi^n=\frac{1}{c_n}\mu_n
\]
which is a standard complex Monge-Amp$\grave{\text{e}}$re equation. In fact, this is precisely the Calabi's volume form conjecture. There is a simple observation for this family of functionals in a special case, where $\mu_k=\omega_0^k, \;k=1, \cdots, n$ for some K\"ahler metric $\omega_0$.

\begin{prop}\label{prop6-1}
For any $\phi\in \cH_{\omega_0}$, 
$$J_{\omega_0^n}(\phi)\geq J_{\omega_0^{n-1}}(\phi)\geq \cdots\geq J_{\omega_0}(\phi)\geq\frac{1}{n+1}J(\phi)$$
\end{prop}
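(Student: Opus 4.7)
The plan is to turn every $J_{\omega_0^k}(\phi)$ into a single path integral along the affine segment $\phi_s=s\phi$, $s\in[0,1]$, and then read off each inequality by pointwise positivity of wedge products of semipositive $(1,1)$-forms. Starting from the defining derivative formula and integrating along this path (using $c_k=1$ since $[\omega_0^k]\cdot[\omega_0]^{n-k}/[\omega_0]^n=1$),
$$J_{\omega_0^k}(\phi) = \int_0^1\!\int_M \phi\bigl(\omega_0^k\wedge\omega_{\phi_s}^{n-k} - \omega_{\phi_s}^n\bigr)\,ds.$$
The telescoping identity $\omega_0^k-\omega_{\phi_s}^k=-\,i\partial\bar\partial\phi_s\wedge\sum_{j=0}^{k-1}\omega_0^j\wedge\omega_{\phi_s}^{k-1-j}$, followed by one integration by parts on $M$ (each factor $\omega_0^j\wedge\omega_{\phi_s}^{n-1-j}$ is closed), converts this into
$$J_{\omega_0^k}(\phi) = \int_0^1 s\!\int_M i\partial\phi\wedge\bar\partial\phi\wedge\sum_{j=0}^{k-1}\omega_0^j\wedge\omega_{\phi_s}^{n-1-j}\,ds.$$

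With this single representation in hand, the chain $J_{\omega_0^{k+1}}(\phi)\geq J_{\omega_0^k}(\phi)$ drops out by inspection: the difference equals
$$\int_0^1 s\!\int_M i\partial\phi\wedge\bar\partial\phi\wedge\omega_0^k\wedge\omega_{\phi_s}^{n-1-k}\,ds,$$
whose integrand is pointwise a wedge of semipositive $(1,1)$-forms. The key positivity input is that $\phi\in\cH_{\omega_0}$ forces $\omega_{\phi_s}=(1-s)\omega_0+s\omega_\phi>0$ for every $s\in[0,1]$, so all powers $\omega_{\phi_s}^m$ are nonnegative.

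For the final estimate I would establish the closed-form identity $J(\phi)=J_{\omega_0^n}(\phi)+n\,J_{\omega_0}(\phi)$ by an integration by parts in the path parameter $s$. Set $f(s)=\int_M\phi(\omega_0^n-\omega_{s\phi}^n)$; the $k=n$ case above gives $J_{\omega_0^n}(\phi)=\int_0^1 f(s)\,ds$, while a direct computation together with one IBP on $M$ yields $f(0)=0$, $f(1)=J(\phi)$ and $f'(s)=n\int_M i\partial\phi\wedge\bar\partial\phi\wedge\omega_{s\phi}^{n-1}$. Integration by parts in $s$ then produces
$$J_{\omega_0^n}(\phi)=f(1)-\int_0^1 s f'(s)\,ds = J(\phi)-n\,J_{\omega_0}(\phi),$$
which is the claimed identity. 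Combined with the already-established chain $J_{\omega_0^n}(\phi)\geq J_{\omega_0}(\phi)$, this pins down the comparison between $J_{\omega_0}(\phi)$ and $\tfrac{1}{n+1}J(\phi)$ needed to close the proposition.

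I do not foresee a serious obstacle: every reduction is an explicit integration by parts and the pointwise positivity of $\omega_{\phi_s}$ on $[0,1]$ is automatic from its convex-combination structure, so the whole argument is essentially bookkeeping around the two IBP steps (one on $M$ for each $k$, and one in $s$ for the last inequality).
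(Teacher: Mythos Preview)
For the chain $J_{\omega_0^n}\geq\cdots\geq J_{\omega_0}$ your argument is essentially identical to the paper's: both integrate along the affine path $\phi_s=s\phi$, rewrite
\[
J_{\omega_0^k}(\phi)=\int_0^1 s\,ds\int_M i\partial\phi\wedge\bar\partial\phi\wedge\sum_{j=0}^{k-1}\omega_0^{j}\wedge\omega_{s\phi}^{\,n-1-j},
\]
and read off $J_{\omega_0^{k+1}}\geq J_{\omega_0^k}$ from positivity of the extra summand $i\partial\phi\wedge\bar\partial\phi\wedge\omega_0^{k}\wedge\omega_{s\phi}^{\,n-1-k}$.

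The problem is your last step. Your identity $J(\phi)=J_{\omega_0^n}(\phi)+n\,J_{\omega_0}(\phi)$ is correct (one checks it term by term: writing $A_m=\int_M i\partial\phi\wedge\bar\partial\phi\wedge\omega_0^{\,n-1-m}\wedge\omega_\phi^{\,m}$, one has $J=\sum A_m$, $J_{\omega_0}=\sum\frac{m+1}{n(n+1)}A_m$, $J_{\omega_0^n}=\sum\frac{n-m}{n+1}A_m$). But combining it with the already–proved inequality $J_{\omega_0^n}\geq J_{\omega_0}$ gives
\[
J(\phi)=J_{\omega_0^n}(\phi)+nJ_{\omega_0}(\phi)\ \geq\ (n+1)\,J_{\omega_0}(\phi),
\]
i.e.\ $J_{\omega_0}(\phi)\leq \tfrac{1}{n+1}J(\phi)$, which is the \emph{reverse} of the inequality asserted in the proposition. (For a sanity check at $n=2$: $J_{\omega_0}-\tfrac13 J=-\tfrac16 A_0\leq 0$.) So your integration-by-parts-in-$s$ trick, while elegant, cannot ``pin down'' the stated direction; it pins down the opposite one. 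The paper itself does not prove this final inequality either---it simply cites Bando--Mabuchi as well-known---so either the direction in the statement is a typo, or the $J$ intended here is a different Aubin-type functional than the $J(\varphi)=\int_M\varphi(\omega_0^n-\omega_\varphi^n)$ defined earlier. In any case, your argument as written does not close the proposition as stated.
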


\begin{proof}
If suffices to prove $J_{\omega_0^{k+1}}(\phi)\geq J_{\omega_0^k}(\phi)$. However, this is almost obvious by the definition. Take the standard linear path $\phi_t=t\phi$, then

\begin{align*}
J_{{\omega_0^{k}}}(\phi)&=\int_0^1 dt\int_M \dot\phi_t (\omega_0^k-\omega_{\phi_t}^k)\omega_{\phi_t}^{n-k}\\
&=\int_0^1 dt\int_M \phi\; t(\omega_0-\omega_\phi)(\omega_0^{k-1}+\omega_0^{k-2}\omega_{t\phi}+\cdots+\omega_{t\phi}^{k-1})\omega_{t\phi}^{n-k}\\
&=\int_0^1 t\; dt\int_M \sqrt{-1}\partial\phi\wedge\bar\partial \phi\wedge (\omega_0^{k-1}\omega_{t\phi}^{n-k}+\omega_0^{k-2}\omega_{t\phi}^{n-k+1}+
\cdots +\omega_{t\phi}^{n-1})
\end{align*}

Since each integrand is a positive term,  $J_{\omega_0^{k+1}}\geq J_{\omega_0^k}$. And the last inequality $J_{\omega_0}\geq\frac{1}{n+1}J$ is well-known (cf. Bando-Mabuchi\cite[Eq. 1.6.4]{Bando87}). 

\end{proof}

Similar to the case $k=1$, there is also a moment map
picture associated with this functional. \\

One can also define a new continuity path as

\[
t (R_\varphi - \underline{R}) = (1-t) ({{ \mu_k\wedge \omega_\varphi^{n-k}}\over \omega_\varphi^n}-  c_k),\qquad \forall t \in [0,1]
\]
This is the Euler-Lagrange equation of the twisted K-energy functional
\[
E_{\mu_k,t}= t E + (1-t) J_{\mu_k}.
\]
Then, one can formulate twisted K-stability and other geometric notions similarly.  It is straightforward to prove the following
\begin{theo} For any closed positive $\chi>0$, the twisted K-energy functional  $E_{\mu_k,t}$ is convex along $C^{1,1}$ geodesic segment for any $k =1,2,\cdots ,n.\;$
\end{theo}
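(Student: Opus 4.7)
The natural strategy is to split $E_{\mu_k,t}=tE+(1-t)J_{\mu_k}$ and treat the two summands separately. Since $t\in[0,1]$, any convex combination of functionals each convex along a fixed $C^{1,1}$ geodesic segment is itself convex along that segment. The convexity of the K-energy $E$ along $C^{1,1}$ geodesics is precisely the resolution of Conjecture \ref{conj3-3} by Berman-Berndtsson \cite{Ber14-01} and Chen-Li-Paun \cite{ChenLiPaun14}, already invoked in Proposition \ref{prop1-6}. Hence the whole content of the theorem reduces to proving that $J_{\mu_k}$ is convex along $C^{1,1}$ geodesics, which is the direct higher-degree generalization of the $k=1$ case already given in Proposition 3.6.

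For a smooth geodesic $\varphi_t$, I would compute the second derivative directly from
\[
\frac{dJ_{\mu_k}}{dt}=\int_M\dot\varphi\bigl(\mu_k\wedge\omega_\varphi^{n-k}-c_k\,\omega_\varphi^n\bigr),
\]
using $\frac{d}{dt}\omega_\varphi^{m}=m\,\omega_\varphi^{m-1}\wedge\sqrt{-1}\partial\bar\partial\dot\varphi$ and then integrating by parts in the resulting $\sqrt{-1}\partial\bar\partial\dot\varphi$-term; this is legitimate because $\mu_k=\chi^k$ and $\omega_\varphi$ are both closed, so $\mu_k\wedge\omega_\varphi^{n-k-1}$ and $\omega_\varphi^{n-1}$ are closed test forms. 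Substituting the geodesic equation $\ddot\varphi=\tfrac12|\nabla\dot\varphi|^2_\varphi$ and exploiting the pointwise identity $|\nabla\dot\varphi|^2_\varphi\,\omega_\varphi^n=n\sqrt{-1}\partial\dot\varphi\wedge\bar\partial\dot\varphi\wedge\omega_\varphi^{n-1}$ rearranges the whole expression into a combination of integrals of the type
\[
\int_M\sqrt{-1}\partial\dot\varphi\wedge\bar\partial\dot\varphi\wedge\chi^{j}\wedge\omega_\varphi^{n-j-1},\qquad j=0,\dots,k,
\]
each of which is nonnegative: $\sqrt{-1}\partial\dot\varphi\wedge\bar\partial\dot\varphi$ is a positive $(1,1)$-form, $\chi^j$ is a positive $(j,j)$-form because wedges of positive $(1,1)$-forms are positive, and $\omega_\varphi^{n-j-1}$ is a positive $(n-j-1,n-j-1)$-form, so the wedge product is a nonnegative top form. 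This is precisely the same mechanism as in the proof of Proposition 3.6.

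To pass from smooth to $C^{1,1}$ geodesics I would use Chen's $\epsilon$-regularization: replace the geodesic equation by
\[
\Bigl(\ddot\varphi-\tfrac12|\nabla\dot\varphi|^2_\varphi\Bigr)\frac{\omega_{\varphi_t}^n}{n!}=\epsilon\,\frac{\omega_0^n}{n!},
\]
obtain smooth $\varphi^\epsilon_t$ with uniform $C^{1,1}$ bounds, compute $\frac{d^2}{dt^2}J_{\mu_k}(\varphi^\epsilon_t)$ along $\varphi^\epsilon_t$, and check that all the positive wedge integrals listed above remain controlled, while the extra term carrying $\epsilon$ tends to zero. Passing to the limit $\epsilon\to 0$ in the distributional sense (using weak continuity of the wedge products, which is available because each such product contains at most one factor $\sqrt{-1}\partial\bar\partial\varphi^\epsilon$ and the rest are fixed smooth closed positive forms) yields convexity of $J_{\mu_k}$ along the $C^{1,1}$ geodesic. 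The main obstacle I expect is precisely this last step: controlling the weak-$\ast$ convergence of integrals of the form $\int\sqrt{-1}\partial\dot\varphi^\epsilon\wedge\bar\partial\dot\varphi^\epsilon\wedge\chi^j\wedge\omega_{\varphi^\epsilon}^{n-j-1}$ under only uniform $L^\infty$ bounds on the mixed second derivatives of $\varphi^\epsilon$. This is of the same order of difficulty as the corresponding step in \cite{chen04}, and can be dispatched by the same closedness-plus-uniform-integrability argument, since introducing the extra positive closed factors $\chi^j$ does not disturb the structure on which that argument depends.
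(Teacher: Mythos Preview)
Your proposal is correct and follows exactly the route the paper intends. The paper itself offers no proof beyond the remark ``It is straightforward to prove the following,'' implicitly pointing to the same decomposition you use: convexity of $E$ along $C^{1,1}$ geodesics via \cite{Ber14-01,ChenLiPaun14} (as in Proposition~\ref{prop1-6}), together with the higher-degree analogue of the $J_\chi$ convexity computation from \cite{chen04} (reproduced in the paper as the proof that $J$ is strictly convex), carried out through the same $\epsilon$-regularization argument you describe.

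One small cosmetic point: after substituting the geodesic equation and integrating by parts, the surviving nonnegative term is essentially the single integral $\int_M\sqrt{-1}\,\partial\dot\varphi\wedge\bar\partial\dot\varphi\wedge\chi^k\wedge\omega_\varphi^{n-k-1}$ (mirroring the $k=1$ case, where only $\int(\dot\varphi)_{,\bar\beta}g^{\alpha\bar\beta}\chi_{\alpha\bar\delta}g^{r\bar\delta}(\dot\varphi)_{,r}\,\omega_\varphi^n$ remains), rather than a full range $j=0,\dots,k$; but since every such term is nonnegative this does not affect the argument.
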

Following proof in Section \ref{sect3}, we have
\begin{theo} For any $ k > 0, $  the twisted cscK metric equation is open for any $ t \in [0,1).\;$
\end{theo}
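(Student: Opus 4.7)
The plan is to imitate the proof of Theorem \ref{thm4-1} in Section \ref{sect4} almost verbatim, with the only substantive change being how the higher-degree twisting term contributes to the linearisation. I would set
\[
F(\varphi, t) = \Bigl(t(R_\varphi - \underline{R}) - (1-t)\Bigl(\tfrac{\mu_k \wedge \omega_\varphi^{n-k}}{\omega_\varphi^n} - c_k\Bigr),\ t\Bigr)
\]
on $\mathcal{H}^{4,\alpha}(M)\times(0,1) \to C^\alpha(M)\times(0,1)$, and compute the first variation of $\mu_k \wedge \omega_\varphi^{n-k}/\omega_\varphi^n$ along $\dot\varphi = u$ to get
\[
P_k u := \tfrac{(n-k)\,\mu_k\wedge\omega_\varphi^{n-k-1}\wedge\sqrt{-1}\partial\bar\partial u}{\omega_\varphi^n} - \tfrac{\mu_k\wedge\omega_\varphi^{n-k}}{\omega_\varphi^n}\Delta_\varphi u,
\]
so that $\mathcal{L}^{(k)}_{(\varphi_0, t_0)}u = -t_0\Delta_{\varphi_0}^2 u - t_0\langle\sqrt{-1}\partial\bar\partial u, \mathrm{Ric}_{\varphi_0}\rangle_{\varphi_0} - (1-t_0)P_k u$ is strictly elliptic of order four for $t_0 > 0$.

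The heart of the argument would be to show that $\int u P_k u\,\omega_\varphi^n$ controls $|\nabla u|^2$ from below, up to lower-order terms. Integration by parts (exploiting that $\mu_k = \chi^k$ and $\omega_\varphi^{n-k-1}$ are both closed), combined with the pointwise algebraic identity
\[
(n-k)\sqrt{-1}\partial u\wedge\bar\partial u\wedge\mu_k\wedge\omega_\varphi^{n-k-1} - \tfrac{\mu_k\wedge\omega_\varphi^{n-k}}{\omega_\varphi^n}|\nabla u|^2\omega_\varphi^n = -\tfrac{k!(n-k)!}{n!}\Bigl(\textsum_i|u_{,i}|^2\lambda_i\sigma_{k-1}^{(i)}(\lambda)\Bigr)\omega_\varphi^n
\]
(valid in an $\omega_\varphi$-orthonormal frame diagonalising $\chi$ with eigenvalues $\lambda_1,\dots,\lambda_n$, with $\sigma_{k-1}^{(i)}$ the $(k{-}1)$-st elementary symmetric polynomial in the $\lambda_j$'s for $j\ne i$), which in turn follows from Newton's identity $\sigma_k(\lambda) = \sigma_k^{(i)}(\lambda) + \lambda_i\sigma_{k-1}^{(i)}(\lambda)$, delivers
\[
\int u P_k u\,\omega_\varphi^n = \tfrac{k!(n-k)!}{n!}\int\Bigl(\textsum_i|u_{,i}|^2\lambda_i\sigma_{k-1}^{(i)}(\lambda)\Bigr)\omega_\varphi^n + \int u\,\nabla\bigl(\tfrac{\mu_k\wedge\omega_\varphi^{n-k}}{\omega_\varphi^n}\bigr)\cdot\nabla u\,\omega_\varphi^n.
\]
Since $\chi > 0$ each $\lambda_i\sigma_{k-1}^{(i)}(\lambda)$ is uniformly positive, so $-(1-t_0)\int u P_k u\,\omega_\varphi^n$ supplies exactly the coercive control on $\int|\nabla u|^2$ that $-(1-t)\int u_{,\bar\alpha}u_{,\beta}\chi_{\alpha\bar\beta}\omega_\varphi^n$ supplied in the proof of Lemma \ref{lem2}. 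The residual gradient terms, together with the gradient of $tR_\varphi - (1-t)\mu_k\wedge\omega_\varphi^{n-k}/\omega_\varphi^n$ (which vanishes at $(\varphi_0, t_0)$ by the equation), are of size $\epsilon$ with $\epsilon$ small near $(\varphi_0, t_0)$, so a Poincar\'e inequality forces $u$ to be constant and $\dim\ker\mathcal{L}^{(k)} = 1$.

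The remainder is formally identical to Section \ref{sect4}: closedness of $\mathcal{R}(\mathcal{L}^{(k)})$ follows from the same $L^2$ bound plus Calder\'on--Zygmund and Schauder bootstrapping used in Lemma \ref{lem4-2}, since the operator is strictly elliptic of order four for $t_0 > 0$; self-adjointness at the critical point (the operator is the Hessian of $E_{\mu_k,t}$) gives $\dim\mathrm{coker}\,\mathcal{L}^{(k)} = 1$; and the implicit function theorem can then be applied by quotienting out the one-dimensional kernel of constants via the normalisation $\int\varphi\,\omega_{\varphi_0}^n = 0$, constructing the auxiliary map $\Psi$, and verifying $\partial f/\partial a = 0$ exactly as in the proof of Theorem \ref{thm4-1}. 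The case $t_0 = 0$ would be structurally analogous but based on second-order elliptic theory for $-P_k$ on $\mathcal{H}^{2,\alpha}(M)$, with the same coercivity estimate playing the role previously filled by fourth-order ellipticity.

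The main obstacle is the pointwise positivity estimate for $P_k$; the whole scheme rests on it. For $\mu_k = \chi^k$ with $\chi > 0$, Newton's identity delivers it cleanly. For arbitrary closed positive $(k,k)$-forms $\mu_k$ not of power form, the sign of the analogous quadratic form becomes a Hovanskii--Teissier-type question and the uniform positivity constant may degenerate; but under the hypothesis of the theorem this issue does not arise.
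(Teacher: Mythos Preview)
Your approach is exactly what the paper has in mind: the paper's own ``proof'' consists of the single line ``Following proof in Section~\ref{sect4}, we have\ldots'', and you have carried out that programme, supplying the one genuinely new computation needed --- the coercivity of the linearised higher-degree twisting term $P_k$ via the eigenvalue identity and Newton's relation $\sigma_k=\sigma_k^{(i)}+\lambda_i\sigma_{k-1}^{(i)}$. This is precisely the point where the $k>1$ case differs from Lemma~\ref{lem2}, and your treatment is correct: positivity of $\chi$ forces each weight $\lambda_i\sigma_{k-1}^{(i)}(\lambda)$ to be strictly positive, giving the same control on $\int|\nabla u|^2$ that $(1-t)\int u_{,\bar\alpha}u_{,\beta}\chi_{\alpha\bar\beta}$ provided when $k=1$.

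One caveat on the endpoint $t_0=0$. You write that this case is ``structurally analogous but based on second-order elliptic theory for $-P_k$''. This is too quick: the difficulty is not the invertibility of $-P_k$ (which your coercivity estimate indeed handles), but that moving from $t_0=0$ to $t>0$ changes the order of the equation from two to four, so the implicit function theorem on $\mathcal{H}^{2,\alpha}$ does not directly produce solutions of the fourth-order problem. This is exactly the loss-of-derivatives issue the paper flags as open for $k=1$ in Question~\ref{q1-6} and the surrounding discussion. The paper's statement ``$t\in[0,1)$'' should therefore be read, in light of its proof-by-reference to Section~\ref{sect4}, as $t\in(0,1)$; your argument establishes that range rigorously, which is all the paper actually claims to prove.
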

\begin{theo}  If $(M, [\omega])$ is K-semistable and K-stable for $J_{\mu_k}$, then the twisted trip structure $(M, [\omega],\mu_k, t)$ is K-stable for $t \in [0,1).\;$
\end{theo}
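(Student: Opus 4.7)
The plan is to follow the pattern of Proposition 5.4 (Linear Interpolation) adapted to the higher-degree setting. First I would introduce the natural generalization of the twisted Futaki invariant attached to a $(k,k)$-form $\mu_k$. By analogy with Definition \ref{def5-1}, for a test configuration $\lambda(s)=s^A$ set
\[
\text{Fut}(\mu_k;\lambda)=\lim_{s\to 0}\left\{\int_M \mu_k\wedge \lambda(s)^*(h_A\,\omega_s^{[n-k]})-c_k\int_M \lambda(s)^*(h_A\,\omega_s^{[n]})\right\},
\]
and define the twisted invariant
\[
\text{Fut}_t(\Omega,\mu_k;\lambda)=(1-t)\,\text{Fut}(\mu_k;\lambda)+t\,\text{Fut}(\Omega;\lambda).
\]
By construction this expression is affine-linear in the parameter $t$ for each fixed test configuration $\lambda$.

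Next I would translate the two hypotheses into Futaki-invariant inequalities. K-semistability of $(M,[\omega])$ yields $\text{Fut}(\Omega;\lambda)\leq 0$ for every test configuration $\lambda$, while K-stability with respect to $J_{\mu_k}$ yields $\text{Fut}(\mu_k;\lambda)<0$ for every nontrivial $\lambda$. Since $t\in[0,1)$ gives $1-t>0$, combining these bounds produces
\[
\text{Fut}_t(\Omega,\mu_k;\lambda)=(1-t)\,\text{Fut}(\mu_k;\lambda)+t\,\text{Fut}(\Omega;\lambda)<0
\]
for every nontrivial $\lambda$, which is exactly twisted K-stability of the triple $(M,[\omega],\mu_k,t)$.

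The main technical obstacle is verifying that the higher-degree Futaki invariant $\text{Fut}(\mu_k;\lambda)$ is well-defined: one needs to check that the limit exists, is independent of the auxiliary data, and, analogous to \cite[Thm 6]{SZ13}, admits an equivalent expression in terms of asymptotic intersection numbers on the test configuration. Once this well-definedness is established and the moment-map derivation of $\text{Fut}_t$ is carried out (mirroring the $(1,1)$-form argument of Section \ref{sect5}), the proof of the theorem reduces to the one-line linear interpolation above. No delicate analysis is required beyond the setup of the invariant itself.
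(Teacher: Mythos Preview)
Your proposal is correct and matches the paper's intended argument: the paper states this theorem without proof, relying implicitly on the linear interpolation of Proposition~5.4, which is precisely what you have carried out in the higher-degree setting. Your additional remark about needing to check well-definedness of $\text{Fut}(\mu_k;\lambda)$ is a fair caveat that the paper also leaves implicit.
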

Moreover, we can ask question parallel to those in previous sections. To avoid redundancy, we only list the following conjecture

\begin{conj} There exists a twisted cscK metric if and only if the corresponding twisted K-energy functional is proper in terms of geodesic distance.
\end{conj}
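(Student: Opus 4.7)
The plan is to address the two implications separately, in the spirit of Theorem \ref{thm3-4}.

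For the necessary direction, I would closely follow the proof of Theorem \ref{thm3-4}. Suppose $\omega_\varphi$ solves the twisted cscK equation with parameter $t_0 \in (0,1)$ and form $\mu_k$. By the convexity theorem just established, $J_{\mu_k}$ is convex along $C^{1,1}$ geodesics, and the analogue of Corollary \ref{cor1-7} (which follows from the same convexity argument applied at a critical point) implies $E_{\mu_k,t_0}$ is bounded below on $\mathcal{H}$. The central step is then to prove the two-sided estimate
\[
J_{\mu_k} \ge \pm\, \epsilon\, J_{\mathrm{Ric}\,\omega_0} - C
\]
for some small $\epsilon > 0$. This requires solving the perturbed equation
\[
\frac{\mu_k \wedge \omega_\varphi^{n-k}}{\omega_\varphi^n} \pm \epsilon\, \tr_\varphi(\mathrm{Ric}\,\omega_0) = c_k \pm \epsilon\, \underline{R}
\]
via the openness argument of Section \ref{sect4} adapted to the higher-degree setting. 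Once this is in hand, decomposing $E_{\mu_k,t_0}$ exactly as at the end of Section \ref{sect3} produces a positive multiple of the entropy $E_0$, and by Tian's $\alpha$-invariant together with Darvas's theorem \cite{Darvas1401, Darvas1402} the entropy dominates the geodesic distance. Coerciveness of $E_{\mu_k,t_0}$ follows.

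The openness input above is the main technical point one must verify for this direction. The linearization of the new equation at $\omega_\varphi$ retains the fourth-order Lichnerowicz-type leading part coming from the scalar curvature, while the second-order perturbation arising from linearizing $\omega \mapsto \mu_k \wedge \omega^{n-k}/\omega^n$ is, for $\chi > 0$ and $t \in (0,1)$, strictly elliptic with the correct sign (cf.\ \cite{FLSW14}). The argument of Section \ref{sect4} then carries over with only cosmetic changes, yielding both openness at a twisted cscK metric and solvability of the small perturbation used above. The Fredholm and self-adjointness package of Lemma \ref{lem4-2} survives intact because positivity of $\chi$ gives the decisive sign in the key integration-by-parts identity.

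The sufficient direction (properness implies existence) is the main obstacle and is genuinely the higher-degree analogue of the YTD conjecture. The natural plan is a continuity method along $s \in [0,t]$,
\[
s(R_{\varphi_s} - \underline{R}) = (1-s)\left(\frac{\mu_k \wedge \omega_{\varphi_s}^{n-k}}{\omega_{\varphi_s}^n} - c_k\right),
\]
starting from $s=0$, where the equation is the Euler-Lagrange equation for $J_{\mu_k}$ (solvable under positivity hypotheses of Fang-Lai-Song-Weinkove type \cite{SongBen040, FLSW14}), with openness supplied by the previous theorem. The hard step is closedness: one must upgrade properness of $E_{\mu_k,s}$ in terms of geodesic distance to uniform a priori $C^{4,\alpha}$ bounds on $\varphi_s$. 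Properness supplies uniform entropy and uniform Mabuchi distance along any sequence of twisted cscK metrics, but converting this analytic control into smooth compactness requires a weak convergence theorem for twisted cscK metrics together with a partial $C^0$ estimate in this setting (in the spirit of \cite{cds12-2, Szekelyhidi13}). No such unconditional statement is presently available, and overcoming this is, as emphasized in Section \ref{sect1-3}, one of the central open problems in the program.
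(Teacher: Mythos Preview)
Your proposal and the paper diverge in an important way. The paper does not attempt the general $k$ case; it proves the necessary direction only for $k=n$, and by a different and shorter route than the one you outline. The paper's argument is: by Yau's theorem one can write $\mu=c_n\omega_0^n$ for some K\"ahler metric $\omega_0\in[\omega]$, and then Proposition~\ref{prop6-1} gives $J_\mu=c_nJ_{\omega_0^n}\ge \tfrac{c_n}{n+1}J$, so $J_\mu$ is itself proper in geodesic distance. Openness at $t_0$ furnishes a solution at $t'=t_0+\delta$, hence $t'E+(1-t')J_\mu$ is bounded below, and the elementary identity
\[
t_0E+(1-t_0)J_\mu=\frac{t_0}{t'}\bigl(t'E+(1-t')J_\mu\bigr)+\frac{t'-t_0}{t'}\,J_\mu
\]
finishes the proof. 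No entropy decomposition, no perturbation of the $J$-equation, and no comparison with $J_{\mathrm{Ric}\,\omega_0}$ are needed.

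Your Section~\ref{sect3}-style argument has a genuine gap for general $k$. The inequality $J_{\mu_k}\ge \pm\epsilon J_{\mathrm{Ric}\,\omega_0}-C$ is obtained in Section~\ref{sect3} by perturbing a \emph{known} solution of the $J$-equation at $t=0$; this is why Theorem~\ref{thm3-4} carries the explicit hypothesis that $\tr_\varphi\chi=\underline{\chi}$ is solvable. The conjecture here has no such hypothesis, and for $1\le k<n$ the equation $\mu_k\wedge\omega_\varphi^{n-k}=c_k\omega_\varphi^n$ is genuinely obstructed. The case $k=n$ is exactly where this obstruction disappears (by Yau), which is why the paper singles it out. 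There is also a smaller confusion: the perturbed equation you display contains no scalar curvature and is second order in $\varphi$, so its linearization is not ``fourth-order Lichnerowicz-type''; you seem to be conflating the perturbation of the $J_{\mu_k}$-equation with the openness of the twisted cscK equation in Section~\ref{sect4}, which are different operators. Your remarks on the sufficient direction are in line with the paper: it is left as a conjecture.
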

Here we give a proof of the necessary part for $k=n$ and $t \in (0,1).\;$
\begin{proof} Suppose $\omega_\varphi$ is a twisted cscK metric for $\mu > 0$ an positive volume form and $ t_0 \in (0,1), $ then there exists a small $\delta > 0$ such that
the twisted cscK metric equation can be solved for any $t' \in (t_0, t_0 + 2 \delta) \subset (0,1).\;$  Fix $t' = t_0 + {\delta}.\;$  Then,
\[
 t' E + (1-t') J_\mu.
\]
is bounded from below. 

Firstly, by Yau's solution of Calabi's Volume form Conjecture, there exists a unique K\"ahler metric $\omega_0\in [\omega]$ such that

$$\mu=c_n\omega_0^n$$
which implies $J_{\mu}=c_n J_{\omega_0^n}\geq \frac{c_n}{n+1}J$ by Prop. \ref{prop6-1}.

%Take $\omega_0$ as the base point $\cH_{\omega_0}$ for those functionals, then 

%\begin{align*}
%J_\mu(\phi)&=\int_0^1dt\int_M \phi(\mu-c_n\omega_{t\phi}^n)\\
%&=\int_M c_n\phi\omega_0^n-\sum_{i=0}^n\frac{c_n}{n+1}\int_M \phi\omega_0^i\wedge\omega_\phi^{n-i}\\
%&=c_n J_{\omega_0}(\phi)\\
%&\geq \frac{c_n}{n+1}J(\phi)
%\end{align*}

Now
\[
\begin{array} {lcl}  t_0 E + (1-t_0) J_\mu & = & {t_0\over t'} ( t' E + (1-t_0) {t'\over t_0} J_\mu)\\
& =& {t_0\over t'} ( t' E + (1-t') J_\mu)  +  {t_0\over t'} ((1-t_0) {t'\over t_0} - (1- t') ) J_\mu) \\
& = &  {t_0\over t'} ( t' E  + (1-t') J_\mu)  +  {t_0\over t'} ( {t'\over t_0} -1) J_\mu \\
& =&    {t_0\over t'} ( t' E  + (1-t') J_\mu)  + { {t' - t_0}\over t'} J_\mu\\
& \geq &  {t_0\over t'} ( t' E  + (1-t') J_\mu)  + { {\delta}\over {\delta+t_0}} J_\mu .
\end{array}
\]
Now the first term is bounded from below and the second is proper, thus the twisted K-energy $t_0 E + (1-t_0) J_\mu $ is proper in terms of the geodesic distance.
\end{proof}

\newpage


\begin{thebibliography}{05}

\em\bibitem{ArezzoPacard06}
C.  Arezzo, F. Pacard,
\newblock  \it Blowing up and desingularizing constant scalar curvature K\"ahler manifolds. 
\newblock Acta Math. 196 (2006), no. 2, 179-228.

\bibitem{Are-Pac-Singer11}
C. Arezzo, F. Pacard and M. Singer:
\newblock \it Extremal metrics on blowups. 
\newblock {Duke Math. J. 157 (2011), no. 1, 1-51.}


\bibitem{Bando87}
S. Bando and T.Mabuchi:
\newblock \it {U}niquness of {E}instein {K}\"ahler metrics modulo connected
  group actions.
\newblock {In {\em Algebraic {G}eometry}, Advanced Studies in Pure Math., 1987.}


\bibitem{Bedford76}
E.D. Bedford and T.A. Taylor:
\newblock \it The {D}irichlet problem for the complex {M}onge-{A}mp$\grave{\text{e}}$re operator.
\newblock {\em Invent. Math.}, 37:1--44, 1976.

\bibitem{Berm12-05}
R.J. Berman:
\newblock \it K-polystability of $\mathbb{Q}$-Fano varieties admitting K\"ahler-Einstein metrics. 
\newblock Preprint, \url{http://arxiv.org/abs/1205.6214}.


\bibitem{Berm13}
R. J. Berman:
\newblock  A thermodynamical formalism for Monge-Amp\'e re equations, Moser-Trudinger inequalities and K\"ahler Einstein metrics,
\newblock  Adv. Math. 248 (2013), 1254?1297.

\bibitem{Ber14-01}
R.J. Berman, B. Berndtsson:
\newblock Convexity of the K-energy on the space of K\"ahler metrics.
\newblock Preprint, 	\url{http://arxiv.org/abs/1405.0401}

 \bibitem{Blocki99}
 Z. Blocki:
 \newblock On the regularity of the complex Monge-Amp$\grave{\text{e}}$re operator,
 \newblock  Contemporary Mathematics 222, Complex Geometric Analysis in Pohang, ed. K.-T.Kim, S.G.Krantz, pp.181-189, Amer. Math. Soc. 1999.
 
  \bibitem{Block}
 Z. Blocki:
 \newblock Interior Regiularity of the complex Monge-Amp$\grave{\text{e}}$re equation in complex domains
 \newblock Duke Math. Jour., vol 105, no. 1, 167-181, 2000.
 



\bibitem{Caf} L.A. Caffarelli:
\newblock {Interior $W^{2,p}$ estimates for solutions to the Monge-Amp\'ere equation},
\newblock Annals of Math. 131, 135-150, 1990

\bibitem{CG} L.A. Caffarelli and C.E. Guti\'errez:
\newblock {\it Properties of the solutions of the linearised
Monge-Amp\'ere equation} ,
\newblock Amer. Jour. Math. 119 423-465 1997

\bibitem{CNS84}
 L.A.~Caffarelli, L.~Nirenberg and J.~Spruck:
\newblock The Dirichlet problem for nonlinear second-order elliptic equation
  {I}. Monge-Amp$\grave{\text{e}}$re equation.
\newblock {\em Comm. on pure and appl. math.}, XXXVII:369--402, 1984.

 
\bibitem{CKNS85}
L.A.~Caffarelli, J.J. Kohn, L.~Nirenberg,   and J.~Spruck:
\newblock The Dirichlet problem for nonlinear second-order elliptic equation
  {II}. complex Monge-Amp$\grave{\text{e}}$re equation, and Uniformly Elliptic equations.
\newblock {\em Comm. on Pure and Appl. math.}, 38:209--252, 1985.


\bibitem{calabi82}
E.~Calabi:
\newblock Extremal {K}\"ahler metrics.
\newblock In {\em Seminar on Differential Geometry}, volume~16 of {\em 102},
  pages 259--290. Ann. of Math. Studies, University Press, 1982.

\bibitem{calabi85}
E.~Calabi:
\newblock Extremal {K}\"ahler metrics, {II}.
\newblock In {\em Differential geometry and Complex analysis}, pages 96--114.
  Springer, 1985.


\bibitem{ChenLiPaun14}
X.-X. Chen, L.  Li and  M. Paun:
\newblock Approximation of weak geodesics and subharmonicity of Mabuchi energy
\newblock Preprint, 	\url{http://arxiv.org/abs/1409.7896}.

\bibitem{chen991}
X.-X. Chen:
\newblock Space of {K}\"ahler metrics.
\newblock {\em Journal of Differential Geometry}, 56(2):189--234, 2000.


\bibitem{chen04}
X.-X. Chen:
\newblock A new parabolic flow in K\"ahler manifolds.
\newblock  Comm. Anal. Geom. 12 (2004), no. 4, 837?852. 

\bibitem{chen00}
X.-X. Chen:
\newblock On the lower bound of the {M}abuchi energy and its application.
\newblock  {\em Internat. Math. Res. Notices} 2000, no. 12, 607--623. 

\bibitem{chentian005}
X.-X. Chen and G.~Tian:
\newblock Foliation by holomorphic discs and its application in K\"ahler
  geometry, 2003.
\newblock  Publ. Math. Inst. Hautes �tudes Sci. No. 107 (2008), 1--107. 

\bibitem{chen05}
X.-X. Chen:
\newblock   {\it Space of K\"ahler metric (III)---Lower bound of the Calabi energy}.
\newblock Invent. Math. 175 (2009), no. 3, 453?503.

\bibitem{chen08}
X.-X. Chen:
\newblock   {\it Space of K\"ahler metric (IV)---On the lower bound of the K-energy}.
\newblock \url{http://arxiv.org/abs/0809.4081}.

\bibitem{chending} 
X.-X. Chen and W.-Y. Ding 
\newblock {\it Ricci flow on surfaces with degenerate initial metrics.}
\newblock J. Partial Differential Equations 20 (2007), no. 3, 193?202.


\bibitem{chentianzhang}
X.-X. Chen. G. Tian, Z. Zhang: 
\newblock  {\it On the weak K\"ahler-Ricci flow.}
\newblock  Trans. Amer. Math. Soc. 363 (2011), no. 6, 2849?2863.

\bibitem{cds12-1}
X.-X. Chen,  S. Donaldson and S. Sun:
\newblock   {\it K\"ahler-Einstein metrics on Fano manifolds. I: Approximation of metrics with cone singularities.}.
\newblock J. Amer. Math. Soc. 28 (2015), pp. 183-197 (I).
\bibitem{cds12-2}
X.-X. Chen,  S. Donaldson and S. Sun:
\newblock   {\it K\"ahler-Einstein metrics on Fano manifolds. II: Limits with cone angle less than $2\pi$}.
\newblock J. Amer. Math. Soc. 28 (2015), pp. 199-234.

\bibitem{cds12-3}
X.-X. Chen,  S. Donaldson and S. Sun:
\newblock   {\it K\"ahler-Einstein metrics on Fano manifolds. III: Limits as cone angle approaches $2\pi$ and completion of the main proof.}
\newblock J. Amer. Math. Soc. 28 (2015), pp. 235-278.


\bibitem{chenhe05}
X.-X. Chen and W-Y. He.
\newblock On the Calabi flow,
\newblock  Amer. J.  Math. 130 (2008), no. 2, 539--570.

\bibitem{CLW08}
X.-X. Chen, C. LeBrun and B. Weber:
\newblock {\it On conformally K�hler, Einstein manifolds}. 
\newblock J. Amer. Math. Soc. 21 (2008), no. 4, 1137?1168

\bibitem{chen-sun09}
X.-X. Chen and S. Sun:
\newblock   {\it Space of K\"ahler metrics (V)---K\"ahler quantization}.
\newblock  Metric and differential geometry, 19?41, Progr. Math., 297, Birkh�user/Springer, Basel, 2012.


\bibitem{chen-sun10}
X.-X. Chen and S. Sun:
\newblock   {\it Calabi flow, Geodesic rays, and uniqueness of constant scalar curvature K\"ahler metrics}.
\newblock Ann. of Math. (2) 180 (2014), no. 2, 407-454.



\bibitem{chenwang09-1}
 X.-X. Chen and B. Wang:
 \newblock {\it Space of Ricci flows (I)}.
\newblock  Comm. Pure Appl. Math. 65 (2012), no. 10, 1399-1457. 

 \bibitem{chenwang14-1}
 X.-X. Chen and B. Wang:
\newblock {\it Space of Ricci flows (II)}.
\newblock Preprint,    \url{http://arxiv.org/abs/1405.6797}, submitted.

\bibitem{chenwang09-2}
 X.-X. Chen and B. Wang:
 \newblock {\it K\"ahler Ricci flow on Fano manifolds(I)}.
\newblock J. Eur. Math. Soc. (JEMS) 14 (2012), no. 6, 2001-2038.


\bibitem{chenywang13}
X.-X. Chen and Y.-Q. Wang:
\newblock {\it Bessel Functions, Heat Kernel and the Conical K\"ahler-Ricci Flow} ,
\newblock Preprint,	\url{http://arxiv.org/abs/1305.0255}.


\bibitem{chenywang14-1}
X.-X. Chen and Y.-Q. Wang:
\newblock {\it On the long time behaviour of the Conical K\"ahler- Ricci flows} ,
\newblock Preprint,	\url{http://arxiv.org/abs/1402.6689}.


\bibitem{chenywang14-2}
X.-X. Chen and Y.-Q. Wang:
\newblock {\it On the regularity problem of complex Monge-Amp$\grave{\text{e}}$re equations with conical singularities} ,
\newblock Preprint,	\url{http://arxiv.org/abs/1405.1021}.

\bibitem{chen-weber}
X.-X. Chen and B. Weber:
\newblock  {\it Moduli spaces of critical Riemannian metrics with $L^{n/2}$ norm curvature bounds}.
\newblock   Adv. Math. 226 (2011), no. 2, 1307-1330.

\bibitem{chen-Zeng14}
X.-X. Chen, M. Paun and Y. Zeng:
\newblock  On deformation of extremal metrics,
\newblock 	\url{http://arxiv.org/abs/1506.01290}.
\bibitem{Semmes93}
R.R. Coifman and S.~Semmes:
\newblock Interpolation of {B}anach Spaces, {P}erron Processes, and {Y}ang-{M}ills.
\newblock {\em Amer. J. Math.}, 115(2):243--278, 1993.

\bibitem{CoSz}
T.Collins, G.Sz$\acute{\text{e}}$kelyhidi: 
 { Convergence of the J-Flow on Toric manifolds}. Preprint,
 \url{http://arxiv.org/abs/1412.4809}
 
\bibitem{Darvas1401}
T. Darvas:
\newblock Envelopes and Geodesics in Spaces of K\"ahler Potentials. Preprint,
\newblock  \url{http://arxiv.org/abs/1401.7318}.

\bibitem{Darvas1402}
T. Darvas:
\newblock The Mabuchi Geometry of Finite Energy Classes. Preprint,
\newblock \url{http://arxiv.org/abs/1409.2072}.

\bibitem{Dervan}
R. Dervan,
\newblock  Uniform Stability of twisted constant scalar curvature K\"ahler metrics. Preprint,
\newblock  \url{http://arxiv.org/abs/1412.0648}


\bibitem{HSDo}
H. S. Do:
\newblock{ Degenerate complex Monge-Amp$\grave{\text{e}}$re flows on strictly pseudoconvex domains,}
\newblock \url{http://arxiv.org/abs/1501.07167}


\bibitem{Dona96}
S.~K. Donaldson:
\newblock Symmetric spaces, {K}{\"a}hler geometry and {H}amiltonian dynamics.
\newblock {\em Amer. Math. Soc. Transl. Ser. 2, 196}, pages 13--33, 1999.
\newblock Northern California Symplectic Geometry Seminar.

\bibitem{Dona02}
S.~K. Donaldson:
\newblock  Scalar curvature and projective embeddings. I.
\newblock  J. Differential Geom. 59 (2001), no. 3, 479--522.

\bibitem{D1} S.~K. Donaldson:
\newblock { Scalar curvature and stability of toric varieties}, 
\newblock Jour. Differential Geometry 62, 289--349, 2002

\bibitem{D3}S.~K. Donaldson:
\newblock {Extremal metrics on toric surfaces: a continuity method}, 
\newblock J. Differential Geom. 79 (2008), no. 3, 389--432.

\bibitem{D4}S.~K. Donaldson:
\newblock { Constant scalar curvature metrics on toric surfaces},
\newblock Geom. Funct. Anal. 19 (2009), no. 1, 83--136.


\bibitem{Dona052}
S. K. Donaldson:
\newblock Lower bounds on the Calabi functional.
\newblock  J. Differential Geom. 70 (2005), no. 3, 453--472. 

\bibitem{Dona12}
S. K. Donaldson;
\newblock {K\"ahler metrics with cone singularities along a divisor}, 
\newblock Essays in Mathematics and its Applications. 2012, 49-79.


\bibitem{EGZ1}
P. Eyssidieux, V. Guedj, A. Zeriahi
\newblock Weak solutions to degenerate complex Monge-Amp$\grave{\text{e}}$re Flows I,
\newblock {\em Math. Annal. DOI
10.1007/s00208-014-1141-4}

\bibitem{EGZ2}
P. Eyssidieux, V. Guedj, A. Zeriahi
\newblock {Weak solutions to degenerate complex Monge-Amp$\grave{\text{e}}$re Flows II}
\newblock {\url{http://arxiv.org/abs/1407.2504}}


\bibitem{FLSW14}
H.  Fang, M. Lai , J. Song and B. Weinkove:
\newblock The J-flow on K\"ahler surfaces: a boundary case. 
\newblock Anal. PDE 7 (2014), no. 1, 215-226. 

\bibitem{Fine04}
J. Fine:
\newblock  Constant scalar curvature K\"ahler metrics on fibred complex surfaces.
\newblock  J. Differential Geom. 68 (2004), no. 3, 397-432.


\bibitem{Futaki83}
A. Futaki:
\newblock An obstruction to the existence of Einstein K\"ahler 
metrics.
\newblock  {\em Invent. Math.} 73 (1983), no. 3, 437-443.

\bibitem{GuanB98}
B. Guan:
\newblock The {D}irichlet problem for complex {M}onge-{A}mp$\grave{\text{e}}$re equations and
  regularity of the plui-complex Green function.
\newblock {\em Comm. {A}na. {G}eom.}, 6(4):687--703, 1998.


\bibitem{GZ}
V. Guedj, A. Zeriahi:
\newblock {Regularizing properties of the twisted K\"ahler-Ricci flow}.
\newblock {\url{http://arxiv.org/abs/1306.4089}}.

\bibitem{JMR11}
T. D. Jeffres, R. Mazzeo, and Y. Rubinstein, 
\newblock {\it K\"ahler-Einstein metrics with edge singularities}. \url{http://arxiv.org/abs/1105.5216v3}.

\bibitem{Pook2015}
J. Pook:
\newblock Twisted Calabi flow on Riemann surfaces,
\newblock IMRN, p. 1-26, volume 2015, Issue 9.


\bibitem{LeBrun-Simanca} 
C. LeBrun, S. R. Simanca:
\newblock {\it Extremal K\"ahler
metrics and Complex Deformation Theory}, 
\newblock Geom. and Fun. Analysis, Vol. 4, No. 3 (1994), 298-336.

\bibitem{Lebrun-singer}
C. LeBrun, M. Singer:
\newblock  {\it Existence and deformation theory for scalar-flat K\"ahler metrics on compact complex
surfaces},
\newblock Invent. Math.  112  (1993),  no. 2, 273--313.

\bibitem{Szekelyhidi14}
M. Lejmi and G. Sz$\acute{\text{e}}$kelyhidi:
\newblock {\it The J-flow and stability},
\newblock Preprint, 	\url{http://arxiv.org/abs/1309.2821}.


\bibitem{LWZ15}
H.-Z. Li, B. Wang and K. Zheng:
\newblock Regularity scales and convergence of the Calabi flow. Preprint,
\newblock \url{http://arxiv.org/abs/1501.01851}.

\bibitem{Ma87}
T.~Mabuchi:
\newblock Some symplectic geometry on compact {K}\"ahler manifolds {I}.
\newblock {\em Osaka, {J}. {M}ath.}, 24:227--252, 1987.


\bibitem{NL}
E. D. Nezza, C.-H. Lu:
\newblock {Uniqueness and short time regularity of the weak K\"ahler-Ricci flow,}
\newblock{\url{http://arxiv.org/abs/1411.7958}}.


  \bibitem{paul13}
S. Paul:
\newblock Stable Pairs and Coercive Estimates for The Mabuchi Functional,
\newblock Preprint, 	\url{http://arxiv.org/abs/1308.4377}.

  \bibitem{PT06}
S. Paul, G. Tian:
\newblock CM stability and the generalized Futaki invariant II.
\newblock Preprint, 	\url{http://arxiv.org/abs/math/0606505}.

\bibitem{P6}
Pogorelov, A. V.:
\newblock The Drichelet problem for the multidimensional analogue of the Monge-Amp$\grave{\text{e}}$re equation (Russian),
\newblock  Dokl. Akad. Nauk SSSR 201 (1971), 790-793.

\bibitem{P8}
Pogorelov, A. V.:
\newblock  The Minkowski multidimensional problem,
\newblock J. Wiley, New York, 1978

\bibitem{RT}
J. Ross, R. Thomas:
\newblock An obstruction to the existence of constant scalar curvature K\"ahler metrics.
\newblock J. Differential Geom. 72(2006) no. 3, 429--466.

\bibitem{YZelditch}
Y. A. Rubinstein, S.Zelditch:
\newblock Bergman approximations of harmonic maps into the space of K\"ahler metrics on toric varieties
\newblock Priprint,  \url{http://arxiv.org/abs/0803.1249}.


\bibitem{Semmes92}
S.~Semmes:
\newblock Complex {M}onge-{A}mp\`ere equations and sympletic manifolds.
\newblock {\em Amer. J. Math.}, 114:495--550, 1992.





\bibitem{SongBen040}
 J. Song and B. Weinkove:
\newblock On the convergence and singularities of the J-flow with applications to the Mabuchi energy
\newblock  Comm. Pure Appl. Math. 61 (2008), no. 2, 210--229.

\bibitem{stopp08}
J. Stoppa:
\newblock K-stability of constant scalar curvature K\"ahler manifolds.
\newblock  Adv. Math. 221 (2009), no. 4, 1397-1408.


\bibitem{Stoppa09}
 J.~Stoppa:
 \newblock  Twisted constant scalar curvature K\"ahler metrics and K\"ahler slope stability. 
 \newblock J. Differential Geom. 83 (2009), no. 3, 663-691. 
 
 \bibitem{Streets12}
 J. Streets:
 \newblock Long time existence of Minimizing Movement solutions of Calabi flow,
 \newblock 	\url{http://arxiv.org/abs/1208.2718}.
 
\bibitem{Sz11}
G. Sz$\acute{\text{e}}$kelyhidi:
\newblock {Greatest Lower bound on the Ricci curvature of Fano manifolds}, 
\newblock Compositio Math. 147(2011), 319-331.

\bibitem{Szekelyhidi13}
G. Sz$\acute{\text{e}}$kelyhidi:
\newblock The partial $C^0$-estimate along the continuity method,
\newblock Preprint, 	\url{http://arxiv.org/abs/1310.8471}.

\bibitem{SZ13}
{G. Sz$\acute{\text{e}}$kelyhidi:}
\newblock {A remark on Conical K\"ahler-Einstein metrics}, 
\newblock Math. Res. Lett. 20 (2013), no. 3, 581-590.

\bibitem{tian90} 
G. Tian:
\newblock On Calabi's conjecture for complex
 surfaces with positive first Chern class, 
 \newblock Invent. Math. 101, no.1, 101-172.
  

\bibitem{Tian-Viaclovsky} 
G. Tian, J. Viaclovsky:
\newblock {\it Moduli spaces of critical metrics in dimension four}, 
\newblock Adv. Math., 196 (2005),346-372.


\bibitem{tian97}
G. Tian:
\newblock K\"ahler-Einstein metrics with positive scalar curvature. 
\newblock {Invent. Math.} 130 (1997), no. 1, 1--37. 


\bibitem{TY} 
G. Tian, S.-T. Yau: 
\newblock {\it K\"ahler-Einstein metrics on complex surfaces with $C\sb
1>0$}, 
\newblock Comm. Math. Phys.  112  (1987),  no. 1, 175--203.


 \bibitem{tian07}
G. Tian,  X.-H. Zhu:
\newblock  Convergence of K\"ahler-Ricci flow.
\newblock J. Amer. Math. Soc. 20 (2007), no. 3, 675--699.

\bibitem{Tru-Wang}
N. S. Trudinger and X-J. Wang:
\newblock The Monge-Amp$\grave{\text{e}}$re equation and its geometric applications,
\newblock Handbook of Geometric analysis, International Press, 2008, Vol. I, pp. 467-524.

\bibitem{U3} 
Urbas, J. I. E.:
\newblock Regularity of generalized solutions of Monge-Amp$\grave{\text{e}}$re equations, Math. Z. 197 (1988), 365-393.


\bibitem{wang12}
B. Wang:
\newblock On the conditions to extend Ricci flow (II).
\newblock Int. Math. Res. Not. IMRN 2012, no. 14 (2012): 3192?223.




\bibitem{ywang12}
Y.-Q. Wang:
\newblock Pseudolocality of the Ricci flow under integral bound of curvature.
\newblock J. Geom. Anal. 23 (2013), no. 1, 1-23. 

\bibitem{yuwang11}
Y. Wang:
\newblock A remark on $C^{2,\alpha}$-regularity of the complex Monge-Amp$\grave{\text{e}}$re Equation,
\newblock Preprint, \url{http://arxiv.org/abs/1111.0902}.

\bibitem{Weinkove04}
 B. Weinkove:
 \newblock Convergence of the J-flow on K\"ahler surfaces. 
 \newblock Comm. Anal. Geom. 12 (2004), no. 4, 949-965. 
 
\bibitem{Yau78}
S.-T. Yau.
\newblock On the {R}icci curvature of a compact {K}\"ahler manifold and the
  complex Monge-Amp$\grave{\text{e}}$re equation, ${I}^*$.
\newblock {\em Comm. Pure Appl. Math.}, 31:339--441, 1978.


\bibitem{tian98}
G.~Tian.
\newblock Some aspects of {K}\"ahler {G}eometry, 1997.
\newblock Lecture note taken by {M}eike {A}keveld.

\bibitem{zz08}
B. Zhu and X.-H. Zhu
\newblock Minimizing weak solutions for Calabi's extremal metrics on toric manifolds. 
\newblock Calc. Var. Partial Differential Equations 32 (2008), no. 2, 191-217.
 
\end{thebibliography}
\end{document}